\theoremstyle{plain}
\newtheorem{theorem}{Theorem}[section]
\newtheorem{lemma}[theorem]{Lemma}
\newtheorem{proposition}[theorem]{Proposition}
\newtheorem{remark}[theorem]{Remark}
\newtheorem{example}[theorem]{Example}
\newtheorem{remark-question}[section]{Remark-Question}
\newcommand\C{{\mathbb C}}
\newcommand\R{{\mathbb R}}
\newcommand\Z{{\mathbb Z}}
\newcommand\trace{{\rm tr}}
\newcommand\frb{{\mathfrak b}}
\newcommand\frc{{\mathfrak c}}
\newcommand\fre{{\mathfrak e}}
\newcommand\frg{{\mathfrak g}}
\newcommand\frh{{\mathfrak h}}
\newcommand\Real{{\mathfrak R}{\frak e}\,} %%%%% Real and Imaginary parts %%%%%
\newcommand\Imag{{\mathfrak I}{\frak m}\,}
\newcommand\db{{\bar{\partial}}}
\definecolor{fondo}{rgb}{0.93,0.93,0.93}
\begin{document}
\title[]{Six dimensional solvmanifolds with holomorphically trivial canonical bundle}
\subjclass[2000]{%53C25, 53C80, 81T30.
}
%\thanks{This work was supported by the Project...}

\author{Anna Fino}
\address[A. Fino]{Dipartimento di Matematica\\
 Universit\`a di Torino\\
Via Carlo Alberto 10\\
10123 Torino, Italy} \email{annamaria.fino@unito.it}

\author{Antonio Otal}
\address[A. Otal]{Centro Universitario de la Defensa\,-\,I.U.M.A.\\
Academia General Militar\\
Ctra. de Huesca s/n. 50090 Zaragoza\\
Spain}
\email{aotal@unizar.es}

\author{Luis Ugarte}
\address[L. Ugarte]{Departamento de Matem\'aticas\,-\,I.U.M.A.\\
Universidad de Zaragoza\\
Campus Plaza San Francisco\\
50009 Zaragoza, Spain}
\email{ugarte@unizar.es}

\maketitle

\begin{abstract}
We determine the 6-dimensional solvmanifolds admitting an invariant complex structure with holomorphically trivial canonical bundle.
Such complex structures are classified up to isomorphism, and the existence of
strong K\"ahler with torsion (SKT),  generalized Gauduchon, balanced and strongly Gauduchon metrics is studied.
As an application we construct a holomorphic family $(M,J_a)$ of compact complex manifolds
such that $(M,J_a)$ satisfies the $\partial\db$-lemma and admits a balanced metric
for any $a\not=0$, but the central limit neither satisfies the $\partial\db$-lemma nor admits balanced metrics.
\end{abstract}
%%%\maketitle

%%\tableofcontents

\vskip.6cm

\hskip7.8cm {\it In memory of Sergio Console}

\vskip.6cm

%%%%%%%%%%%%%%%%%%%%%%
\section{Introduction}
%%%%%%%%%%%%%%%%%%%%%%

\noindent Any compact complex surface with holomorphically  trivial canonical bundle is isomorphic to a K3 surface,
a torus, or a Kodaira surface; the first two are K\"ahler, and the latter is a
nilmanifold, i.e. a compact quotient of a nilpotent Lie group by a lattice.
It is well known that in any real dimension $2n$ the
canonical bundle of  a  nilmanifold  $\Gamma\backslash G$  endowed with an invariant complex structure
is holomorphically trivial, where by invariant complex structure
we mean one induced by a complex structure on the Lie algebra of the nilpotent Lie group $G$.
In fact, Salamon proved in \cite{S} the existence of a closed non-zero invariant $(n,0)$-form
(see also~\cite{BDV} for some applications of this fact to hypercomplex nilmanifolds). For 6-dimensional nilmanifolds the classification
of invariant complex structures $J$ as well as the existence of some special Hermitian metrics
(as SKT, generalized first Gauduchon, balanced and strongly Gauduchon metrics) with respect to such $J$'s have been studied in \cite{COUV,FPS,FU}.
In this paper we are interested in the Hermitian geometry of 6-dimensional solvmanifolds admitting an
invariant complex structure with holomorphically trivial canonical bundle, i.e.
with an \emph{invariant} non-zero closed (3,0)-form (see Proposition~\ref{holglobalform} for details).
%%%Note that, since $J$ is invariant and by using the symmetrization, we can suppose that the (3,0)-form is invariant.
Throughout the paper, by a solvmanifold we mean a compact quotient $\Gamma\backslash G$ of a \emph{non-nilpotent} solvable
Lie group $G$ by a lattice~$\Gamma$.

Since the complex structures we consider are invariant, they come from complex structures on the Lie algebra $\frg$ of $G$.
Given an almost complex structure $J\colon\frg\longrightarrow\frg$ on a 6-dimensional Lie algebra $\frg$, the existence
of a non-zero closed form of bidegree (3,0)
implies that  $J$ is integrable, i.e. that $J$ is a complex structure.
If the Lie algebra is nilpotent then both conditions are equivalent~\cite{S}.
The 6-dimensional nilpotent Lie algebras admitting a complex structure
were classified in \cite{S} and recently these complex structures
have been classified up to equivalence~\cite{COUV}.
For solvable Lie algebras admitting a complex structure
there exists a general classification  only in real dimension four \cite{Ovando},
but no  general result is known in higher dimension.

In this paper we consider the bigger
class of 6-dimensional solvable Lie algebras $\frg$ and look for almost complex structures admitting
a non-zero closed (3,0)-form. Notice that the latter condition implies that $b_3(\frg)\geq 2$.
We will consider both the indecomposable and the decomposable solvable Lie algebras which are unimodular
as we aim to find invariant complex structures  with non-zero closed (3,0)-form on solvmanifolds \cite{Milnor}.
We recall that the 6-dimensional indecomposable solvable Lie algebras
have been classified by Turkowski \cite{T}, Mubarakzyanov \cite{Mu} and Shabanskaya,
who recovered and corrected in \cite{Sha} the work of Mubarakzyanov
(see the Appendix for more details).

In Section~\ref{sec-preli} we consider the formalism of stable forms \cite{H}, together with ideas in
\cite{C,F-Schu1,F-Schu2,Schu}, and we explain in detail the method that we follow to classify unimodular
solvable Lie algebras admitting a complex structure with non-zero closed (3,0)-form. The classification is obtained
in Propositions~\ref{decomposable} and~\ref{indecomposable} for the decomposable and indecomposable cases,
respectively. As a consequence we have that
if a solvmanifold admits a complex structure arising from an invariant non-vanishing holomorphic (3,0)-form, then its
underlying Lie algebra $\frg$ must be isomorphic to one in the list of Theorem~\ref{main-th}, i.e.
$\frg\cong \frg_1,\frg_2^{\alpha} \ (\alpha\geq0),\frg_3,\ldots,\frg_8$ or $\frg_9$.
The Lie algebras $\frg_1,\frg_2^{\alpha} \ (\alpha\geq0)$ and $\frg_3$ are decomposable, whereas $\frg_4,\ldots,\frg_8$
and $\frg_9$ are indecomposable.
The Lie algebra $\frg_8$ is precisely the real Lie algebra underlying Nakamura manifold \cite{Nak}.
%%%Notice that, in contrast to the nilpotent case, there is not a finite number of solvable Lie algebras
%%%admitting such complex structures because $\frg_2^{\alpha}\not\cong\frg_2^{\alpha'}$ if $\alpha\not=\alpha'$.
Moreover, using some results in \cite{Bock}, we ensure in Proposition~\ref{existence-of-lattices}
the existence of a lattice for the simply-connected Lie groups
associated to the Lie algebras in the list, although for $\frg_2^{\alpha}$ we are able to find a lattice
only for a countable number of different values of $\alpha$ (note that one cannot expect a lattice to exist for
any real $\alpha>0$ according to \cite[Prop. 8.7]{Wi}).

In Section~\ref{sec-reduc} we consider the whole space of complex structures having closed (3,0)-form
and classify them up to equivalence
(see Theorem~\ref{solvmanifolds-complex-classification}).
It turns out that the classification is finite, except for $\frg_3$ (Proposition~\ref{complex-moduli-g3})
and $\frg_8$ (Proposition~\ref{complex-moduli-g8}).
As a consequence of this complex classification, we study in Section~\ref{sec-metrics} the existence of several special
Hermitian metrics on the corresponding complex solvmanifolds. The SKT geometry is studied in Theorem~\ref{skt-geometry}
and provides a new example of compact SKT manifold based on the Lie algebra $\frg_4$.
In Theorem~\ref{gamma1-geometry} we investigate the existence of \emph{invariant} first Gauduchon metrics, in the sense of \cite{FWW},
and find that a solvmanifold corresponding to $\frg_6$
has invariant first Gauduchon metrics which are not SKT, moreover it does not admit any SKT metric
(notice that any invariant first Gauduchon metric on a 6-nilmanifold
is necessarily SKT as proved in \cite[Proposition 3.3]{FU}).
Finally, Theorems~\ref{balanced-geometry} and~\ref{sG-geometry} deal with the existence of balanced metrics~\cite{Mi}
and
strongly Gauduchon metrics~\cite{Pop0,Pop2,Pop09},
respectively, and provide new compact examples
of such Hermitian geometries.

Our goal in Section~\ref{hol-deform} is to construct a holomorphic family of compact complex manifolds
$(M,J_a)_{a\in \Delta}$, $\Delta=\{a\in \mathbb{C} \mid |a|<1\}$, such that $(M,J_a)$ satisfies the $\partial\db$-lemma
and admits balanced metric
for any $a\not=0$, but $(M,J_0)$ neither satisfies the $\partial\db$-lemma nor admits balanced metric.
The construction is based on the balanced Hermitian geometry studied in Theorem~\ref{balanced-geometry} for $\frg_8$,
the Lie algebra underlying Nakamura manifold, together with a recent result by Angella and Kasuya \cite{AK}
on deformations of the holomorphically parallelizable Nakamura manifold.
Notice that the central limit $(M,J_0)$ admits strongly Gauduchon metric by Theorem~\ref{sG-geometry},
as it must happen according to Popovici's result
\cite[Proposition 4.1]{Pop09}.
We recall that there exists \cite{COUV} a holomorphic family $(N,J_a)$, $N$ being a 6-dimensional nilmanifold,
such that $(N,J_a)$ has balanced metrics for any $a\not=0$, but the central limit $(N,J_0)$ does not admit
strongly Gauduchon metric.

%%%%%%%%%%%%%%%%%%%%%%%%%%%%%%%%%%%%%%%%%%%%%
\section{The classification}\label{sec-preli}
%%%%%%%%%%%%%%%%%%%%%%%%%%%%%%%%%%%%%%%%%%%%%

\noindent We first show that the existence of a holomorphic form of bidegree $(n,0)$
with respect to an invariant complex structure on a $2n$-dimensional solvmanifold implies the
existence of an invariant non-zero closed $(n,0)$-form. Furthermore:

\begin{proposition}\label{holglobalform}
Let $M=\Gamma\backslash G$ be a $2n$-dimensional solvmanifold
endowed with an invariant complex structure $J$.
If $\Omega$ is a nowhere vanishing holomorphic $(n,0)$-form
on $(M,J)$, then $\Omega$ is necessarily invariant.
\end{proposition}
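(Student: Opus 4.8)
The plan is to reduce the statement to a purely Lie-algebra-level observation by exploiting that a nowhere vanishing holomorphic $(n,0)$-form is, up to a constant, unique. Concretely, let $\Omega$ be the given holomorphic $(n,0)$-form on $(M,J)$, and let $\tau$ be an invariant $(n,0)$-form on $M$ coming from the complex structure on $\frg$; such a $\tau$ is nowhere zero since the complex line $\Lambda^{n,0}\frg^*$ is one-dimensional and invariant forms trivialize the canonical bundle pointwise. Then $\Omega = f\,\tau$ for a smooth function $f\colon M\to\C$ which is nowhere zero. The first step is to show $f$ is holomorphic: since $\Omega$ is holomorphic we have $\db\Omega=0$, and expanding $\db(f\tau) = (\db f)\wedge\tau + f\,\db\tau$, so $f$ is holomorphic precisely when $(\db f)\wedge\tau = -f\,\db\tau$; here $\db\tau$ need not vanish (the canonical bundle of $(M,J)$ is holomorphically trivial, but $\tau$ itself need not be $\db$-closed), so I must be a little more careful. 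The cleaner route is to use the holomorphic structure directly: $\Omega$ holomorphic means it is a holomorphic section of the canonical bundle $K_M$, and $\tau$ provides a smooth (generally non-holomorphic) trivialization; but on any complex manifold a nowhere vanishing holomorphic $(n,0)$-form can be written locally as $g\,dz_1\wedge\cdots\wedge dz_n$ with $g$ holomorphic, and comparing two such on overlaps shows that $\Omega/\Omega'$ is a global holomorphic function for any two nowhere vanishing holomorphic $(n,0)$-forms $\Omega,\Omega'$.

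So the real content is: produce one invariant nowhere vanishing \emph{holomorphic} $(n,0)$-form, or rather, show that the ratio of $\Omega$ to an invariant (not necessarily holomorphic) generator is constant. I would argue as follows. Write $\Omega = f\,\tau$ with $\tau$ invariant and nowhere zero, $f$ smooth nowhere zero. Pull back to the universal cover $G$: there $\tilde\Omega = \tilde f\,\tilde\tau$ with $\tilde\tau$ left-invariant. For each $x\in G$, left translation $L_x$ is a biholomorphism of $G$ (with the left-invariant complex structure), so $L_x^*\tilde\Omega$ is again a nowhere vanishing holomorphic $(n,0)$-form, hence $L_x^*\tilde\Omega = h(x)\,\tilde\Omega$ for a holomorphic function $h(x)$ on $G$ (the ratio of two nowhere vanishing holomorphic $(n,0)$-forms, by the local argument above), and one checks $h\colon G\to\C^*$ is a homomorphism-like cocycle. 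Since $\tilde\tau$ is left-invariant, $L_x^*\tilde\tau = \tilde\tau$, so $L_x^*\tilde f = h(x)\,\tilde f$, i.e. $\tilde f(xy) = h(x)\tilde f(y)$; setting $y=e$ gives $\tilde f(x) = h(x)\tilde f(e)$, so $\tilde f$ is (a constant multiple of) the holomorphic character $h$.

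It remains to show $\tilde f$ descends to a \emph{bounded} function on the compact quotient $M$, forcing it to be constant. Indeed $f$ is a well-defined function on the compact manifold $M$, hence $|f|$ attains a maximum; but $|f|^2 = \Omega\wedge\bar\Omega / (\tau\wedge\bar\tau)$ up to sign and the reference volume, so $f$ is genuinely a function on $M$ and $|\tilde f|$ is $\Gamma$-invariant and bounded. A bounded holomorphic function on the complex Lie group $G$ — equivalently, $|h|$ bounded on $G$ for the character-type map $h$ — together with the fact that $h$ is a Lie-group homomorphism into $\C^*$ pulled down to a compact quotient, forces $|h|\equiv 1$; then $\tilde f$ has constant modulus, and a holomorphic function of constant modulus on a connected manifold is constant. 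Hence $f$ is constant and $\Omega = c\,\tau$ is invariant. I expect the main obstacle to be the bookkeeping in the middle step — verifying rigorously that the ratio of two nowhere vanishing holomorphic $(n,0)$-forms on $G$ is holomorphic and that the resulting cocycle $h$ is multiplicative — and making the "bounded holomorphic hence constant modulus" argument precise on the non-compact group $G$ by passing through the compact quotient $M$ (using maximum modulus on $M$ for $|f|$ directly, which is the slickest formulation and avoids delicate statements about holomorphic functions on $G$).
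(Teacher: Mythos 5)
There is a genuine gap, and it sits exactly at the heart of the proposition: you never prove that $f=\Omega/\tau$ is holomorphic. Since the invariant generator $\tau$ need not be $\db$-closed, one only gets $\db\tau=\tau\wedge\gamma$ for some invariant $(0,1)$-form $\gamma$, and holomorphy of $\Omega$ then gives $\db(\log f)=\pm\gamma$; showing $\gamma=0$ \emph{is} the content of the statement. Your universal-cover argument does not deliver this. In the sentence ``$L_x^*\tilde\Omega=h(x)\,\tilde\Omega$ for a holomorphic function $h(x)$ on $G$ \dots\ $\tilde f(xy)=h(x)\tilde f(y)$'' you slide between two readings of $h(x)$. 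Correctly, $L_x^*\tilde\Omega/\tilde\Omega$ is a holomorphic \emph{function of the point} $y\in G$, namely $y\mapsto\tilde f(xy)/\tilde f(y)$; it is not a scalar, because $G$ is non-compact and holomorphic functions on it need not be constant. If you treat $h(x)$ as a scalar you are assuming precisely what must be proved; if you keep it as a function, then $\tilde f(xy)=h(x)(y)\tilde f(y)$ is a tautology, and evaluating at $y=e$ merely reproduces $\tilde f(x)/\tilde f(e)$ --- holomorphy of $y\mapsto\tilde f(xy)/\tilde f(y)$ in $y$ gives no holomorphy of $\tilde f$ in $x$. Consequently the concluding maximum-modulus step has nothing to act on: $|f|$ is bounded on the compact $M$, but $f$ is not known to be holomorphic. (Two further loose ends: $(G,J)$ is not a complex Lie group for a general left-invariant $J$, so ``holomorphic character'' is not meaningful here, and the multiplicativity of $h$ is asserted but never checked.)

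What is missing is a global averaging input, and this is how the paper closes the gap. Writing $\Omega=f\,\omega^1\wedge\cdots\wedge\omega^n$ with $\{\omega^i\}$ invariant, one gets $\db(\log f)=\alpha$ with $\alpha$ an invariant $(0,1)$-form, and then one symmetrizes with respect to a volume element induced by a bi-invariant one on $G$ (available by Milnor's result, since $G$ admits a lattice and hence is unimodular): symmetrization preserves bidegree, commutes with $\db$, fixes invariant forms, and sends functions to constants, so $\alpha=\db\,(\widetilde{\log f})=0$. Only then is $f$ holomorphic, and compactness of $M$ forces it to be constant --- this last step of yours coincides with the paper's. So your overall skeleton (write $\Omega=f\tau$, show $f$ holomorphic, conclude by compactness) is the right one, but the middle step needs the symmetrization/integration-over-$M$ argument (or an equivalent) rather than the left-translation cocycle, which as written is circular.
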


\begin{proof}
Since $J$ is invariant, we consider a global basis $\{\omega^1,\ldots,\omega^n\}$ of
invariant $(1,0)$-forms on $(M,J)$. Then, there is a nowhere vanishing complex-valued function
$f\colon M\rightarrow \mathbb{C}$ such that $\Omega=f\,\omega^1\wedge\cdots\wedge\omega^n$.
Since $\Omega$ is holomorphic, we have
$\db \Omega = \db f \wedge \omega^1\wedge\cdots\wedge\omega^n + f\, \db (\omega^1\wedge\cdots\wedge\omega^n) =0$, that is,
$\db (\omega^1\wedge\cdots\wedge\omega^n) = - \db (\log f)\wedge \omega^1\wedge\cdots\wedge\omega^n$.
The latter form is an invariant $(n,1)$-form on $(M,J)$, so there is an invariant form $\alpha$
of bidegree $(0,1)$ on $(M,J)$
such that
\begin{equation}\label{1}
\db (\log f) =\alpha.
\end{equation}
Now we apply the well-known symmetrization process, which assigns an invariant $k$-form $\widetilde\gamma$ to any
$k$-form $\gamma$ on $M$ by integration with respect to a volume
element on $M$ induced by a bi-invariant one on the Lie group
$G$ (its existence is guaranteed by \cite{Milnor}).
By extending the symmetrization to the space of complex forms, since $J$ is invariant we have that
symmetrization preserves the
bidegree and commutes with $\db$, because it commutes with the differential $d$ (see \cite{COUV,FG} for more details).
Applying this to \eqref{1}, we get
$$
\db (\widetilde{\log f}) =\widetilde\alpha=\alpha,
$$
because $\alpha$ is invariant.
But $\widetilde{\log f}$ is the symmetrization of a function,
so it is a constant and then $\db (\widetilde{\log f}) = 0$.
Therefore, $\alpha=0$ and by \eqref{1} we get $\db (\log f)=0$.
This means that $\log f$ is a holomorphic function
on a compact complex manifold, which implies that
$\log f=c$, where $c$ is a constant.
In conclusion, $f=\exp(c)$ is a constant function, and $\Omega$ is necessarily invariant.
\end{proof}

As a consequence of this result, in order to
describe the solvmanifolds $M=\Gamma\backslash G$ admitting an invariant complex structure
with holomorphically trivial canonical bundle, we are led to study the unimodular solvable Lie algebras
$\frak g$ admitting a complex structure $J$ with non-zero closed $(n,0)$-form.
Next we classify such Lie algebras in dimension 6.

We will recall first the formalism of stable forms which enable us to construct the space
of almost complex structures on a given Lie algebra \cite{C,H}.
Let $(V,\nu)$ be an oriented six-dimensional vector space, a $3$-form $\rho$
is \emph{stable} if its orbit under the action of the group GL($V$) is open.
Let $\kappa\colon\Lambda^5V^*\longrightarrow V$ be the isomorphism defined by
$\kappa(\eta)=X$ where $X$ is such that $\iota_X\nu=\eta$, and let $K_\rho:V\longrightarrow V$
be the endomorphism given by $K_\rho(X)=\kappa(\iota_X\rho\wedge\rho)$.
The square of this endomorphism is proportional to the identity map so
it is an automorphism of the vector space.
Let $\lambda(\rho)$ be the constant of proportionality, that is,
$$
\lambda(\rho)=\frac{1}{6}\trace(K_\rho^2).
$$
Then, the stability of $\rho$ is equivalent to the open condition $\lambda(\rho)\neq 0$.
When $\lambda(\rho)<0$ the endomorphism $J_\rho:=\frac{1}{\sqrt{|\lambda(\rho)|}}K_\rho$
gives rise to an almost complex structure on $V$.
Moreover, $\lambda(\rho)$ enables to construct a specific volume form
$\phi(\rho):=\sqrt{|\lambda(\rho)|}\nu\in\Lambda^6V^*$ such that
the action of the dual endomorphism $J_\rho^*$ on 1-forms is given by the formula
\begin{equation}\label{def-J}
\left( (J_\rho^*\alpha)(X) \right) \phi(\rho)=\alpha\wedge\iota_X\rho\wedge\rho,
\end{equation}
for any $\alpha\in\frg^*$ and $X\in\frg$.

As a consequence there is a natural mapping
\begin{equation}\label{correspondencia}
\{\rho\in\Lambda^3V^*\ |\  \lambda(\rho)<0\}\rightarrow\{J\colon V\rightarrow V\ |\ J^2=-I\}
\end{equation}
assigning each $\rho$ to $J=J_{\rho}$ through relation \eqref{def-J}.
Although this map is not injective, as for example $J_{\rho'}=J_\rho$ when $\rho'$ is proportional to $\rho$,
it is onto and therefore it covers the space of almost complex structures on $V$.

We are interested in the complex structures on a Lie algebra $\frg$ admitting non-zero closed (3,0)-form.
Let $Z^3(\frg)=\{\rho\in\Lambda^3\frg^* \mid d\rho=0 \}$.
The map \eqref{correspondencia} restricts to the surjective mapping
$$
\{\rho\in Z^3(\frg)\ |\ \lambda(\rho)<0,\ d(J_\rho^*\rho)=0\}\rightarrow\{J\colon\frg\rightarrow\frg\ |\ J^2=-I,\
\exists\Psi\in\Lambda^{3,0}\ \text{closed}\}.
$$
The closed (3,0)-form is given by $\Psi=\rho+iJ_\rho^*\rho$.

The next result provides an equivalent condition to determine the existence of such complex structures on $\frg$.

\begin{lemma}\label{prop}
Let $\frg$ be a Lie algebra and $\nu$ a volume form on $\frg$. Then, $\frg$ admits an almost complex structure
with a non-zero closed $(3,0)$-form if and only if there exists $\rho\in Z^3(\frg)$ such that the endomorphism
$\tilde J_\rho^*\colon\frg^*\rightarrow\frg^*$ defined by
\begin{equation}\label{criterio}
\left( (\tilde J_\rho^*\alpha)(X) \right) \nu=\alpha\wedge\iota_X\rho\wedge\rho,
\end{equation}
for any $\alpha\in\frg^*$ and $X\in\frg$, satisfies that $\tilde J_\rho^*\rho$ is closed and ${\rm tr}(\tilde J_\rho^{*2})<0$.
\end{lemma}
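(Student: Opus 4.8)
The plan is to reduce the statement to the discussion preceding it, which already establishes a surjection
$$
\{\rho\in Z^3(\frg)\mid \lambda(\rho)<0,\ d(J_\rho^*\rho)=0\}\longrightarrow\{J\mid J^2=-I,\ \exists\,\Psi\in\Lambda^{3,0}\ \text{closed}\},
$$
with $\Psi=\rho+iJ_\rho^*\rho$. The point is thus to show that the intrinsic conditions appearing there — namely $\lambda(\rho)<0$ together with closedness of $J_\rho^*\rho$ — can be restated, for an \emph{arbitrary} fixed volume form $\nu$, in terms of the endomorphism $\tilde J_\rho^*$ defined by \eqref{criterio}. First I would record the homogeneity of the construction: if one replaces the canonically normalized volume $\phi(\rho)=\sqrt{|\lambda(\rho)|}\,\nu$ in \eqref{def-J} by the given $\nu$, then comparing \eqref{def-J} and \eqref{criterio} yields
$$
\tilde J_\rho^* = \sqrt{|\lambda(\rho)|}\; J_\rho^*
$$
as endomorphisms of $\frg^*$, valid whenever $\lambda(\rho)\neq0$ (when $\lambda(\rho)=0$ the form $\rho$ is not stable and, as I explain below, $\tilde J_\rho^*$ cannot satisfy $\operatorname{tr}(\tilde J_\rho^{*2})<0$ either, so such $\rho$ are harmless).

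Granting this identity, the two sides of the equivalence match up as follows. For the forward direction, suppose $\frg$ admits an almost complex structure $J$ with a non-zero closed $(3,0)$-form; by the surjectivity statement recalled above there is $\rho\in Z^3(\frg)$ with $\lambda(\rho)<0$, $J=J_\rho$, and $d(J_\rho^*\rho)=0$. Then $\operatorname{tr}(\tilde J_\rho^{*2}) = |\lambda(\rho)|\,\operatorname{tr}(J_\rho^{*2}) = |\lambda(\rho)|\cdot(-6) < 0$, since $J_\rho^2=-I$ forces $\operatorname{tr}(J_\rho^{*2})=-6$ on the $6$-dimensional space $\frg^*$. Moreover $\tilde J_\rho^*\rho = \sqrt{|\lambda(\rho)|}\,J_\rho^*\rho$ is a positive multiple of a closed form, hence closed. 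Thus $\rho$ satisfies the stated conditions. Conversely, suppose $\rho\in Z^3(\frg)$ satisfies $\operatorname{tr}(\tilde J_\rho^{*2})<0$ and $d(\tilde J_\rho^*\rho)=0$. From \eqref{criterio} the endomorphism $\tilde J_\rho^*$ depends on $\rho$ through the expression $\iota_X\rho\wedge\rho$, and — just as for $K_\rho$ in the stable case — its square is proportional to the identity: indeed $K_\rho^2 = \lambda(\rho)\,I$ by construction, and $\tilde J_\rho^*$ is, up to the fixed isomorphism $\frg\cong\frg^*$ via $\nu$ and up to scalar, the transpose of $K_\rho$, so $\tilde J_\rho^{*2}$ is a scalar multiple of the identity as well; call that scalar $\mu$. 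Then $\operatorname{tr}(\tilde J_\rho^{*2}) = 6\mu < 0$, so $\mu<0$, which already forces $\lambda(\rho)<0$ (the sign of $\mu$ and of $\lambda(\rho)$ agree because $\tilde J_\rho^*$ and $K_\rho$ differ by a nonzero real scalar). Hence $\rho$ is stable, $J_\rho:=\frac{1}{\sqrt{|\lambda(\rho)|}}K_\rho$ is an almost complex structure, $\tilde J_\rho^* = \sqrt{|\lambda(\rho)|}\,J_\rho^*$, and closedness of $\tilde J_\rho^*\rho$ is equivalent to closedness of $J_\rho^*\rho$. By the surjective correspondence, $\Psi=\rho+iJ_\rho^*\rho$ is then a non-zero closed $(3,0)$-form for $J_\rho$, completing the equivalence.

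The main point requiring care — and the only genuinely computational step — is the scaling identity $\tilde J_\rho^* = \sqrt{|\lambda(\rho)|}\,J_\rho^*$ and, hand in hand with it, the claim that $\tilde J_\rho^{*2}$ is always a multiple of the identity (so that the single scalar condition $\operatorname{tr}(\tilde J_\rho^{*2})<0$ captures both stability, $\lambda(\rho)<0$, and the sign needed for an almost complex structure). This follows by unwinding the definitions: \eqref{def-J} and \eqref{criterio} have identical right-hand sides $\alpha\wedge\iota_X\rho\wedge\rho$, and differ only in the volume form on the left ($\phi(\rho)$ versus $\nu$), with $\phi(\rho)=\sqrt{|\lambda(\rho)|}\,\nu$; dividing, one gets the displayed proportionality on each $1$-form $\alpha$ evaluated on each $X$, hence as endomorphisms. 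The fact that $K_\rho^2$ — and therefore $\tilde J_\rho^{*2}$ — is scalar is exactly the statement quoted just before the definition of $\lambda(\rho)$. No other obstacle arises; the rest is bookkeeping with the symmetrization-free, purely linear-algebraic reformulation, whose purpose is precisely to make the condition checkable on an explicit $\rho$ without first having to compute the normalization $\lambda(\rho)$.
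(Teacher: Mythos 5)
Your proposal is correct and follows essentially the same route as the paper: both hinge on comparing \eqref{def-J} and \eqref{criterio}, whose right-hand sides coincide, via the proportionality $\phi(\rho)=\sqrt{|\lambda(\rho)|}\,\nu$ (equivalently $J_\rho^*=s\,\tilde J_\rho^*$), so that the closedness and trace conditions transfer between $J_\rho^*$ and $\tilde J_\rho^*$. Your additional observation that $\tilde J_\rho^{*2}$ is always a scalar multiple of the identity, so that ${\rm tr}(\tilde J_\rho^{*2})<0$ by itself forces $\lambda(\rho)<0$, merely makes explicit the converse step the paper leaves implicit.
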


\begin{proof}
Let $J\colon \frg\longrightarrow\frg$ be an almost complex structure admitting
a non-zero (3,0)-form $\Psi=\Psi_++i\Psi_-$ which is closed.
Let $\rho=\Psi_+$. Then, $\lambda(\rho)<0$, $J=J_{\rho}$ is determined
by~\eqref{def-J} and the form $J_\rho^*\rho=\Psi_-$ is closed.
Since the associated form $\phi(\rho)$ is a volume form on $\frg$,
we have that $\nu=s\,\phi(\rho)$ for some $s\not=0$.
Now, for the endomorphism $\tilde J_\rho^*\colon \frg^*\longrightarrow\frg^*$
given by~\eqref{criterio}
we get
$$
s \left( (\tilde J_\rho^*\alpha)(X) \right) \phi(\rho)=
\left( (\tilde J_\rho^*\alpha)(X) \right) \nu =
\alpha\wedge\iota_X\rho\wedge\rho =
\left( (J_\rho^*\alpha)(X)  \right) \phi(\rho),
$$
for any $\alpha\in\frg^*$ and $X\in\frg$.
This implies that $J_\rho^*=s\tilde J_\rho^*$. Therefore, $\text{tr}(\tilde J_\rho^{*2})<0$
if and only if $\text{tr}(J_\rho^{*2})<0$,
and moreover, $d(\tilde J_\rho^*\rho)=0$ if and only if $d(J_\rho^*\rho)=0$.
\end{proof}

For the computation of the endomorphism $\tilde J_\rho^*$ we will use the simplest
volume form $\nu=e^{123456}$,
where $\{e^1,\ldots,e^6\}$ is the basis of $\frg^*$ in which the Lie algebra is expressed.
Notice that we will follow the notation given in \cite{F-Schu1,F-Schu2,Schu} and \cite{T}
to name the Lie algebras; for instance, the notation $\fre(2)\oplus\fre(1,1) = (0,-e^{13},e^{12},0,-e^{46},-e^{45})$
means that $\fre(2)\oplus\fre(1,1)$ is the (decomposable) Lie algebra determined by a basis $\{e^i\}_{i=1}^6$ such that
$de^1=0$, $de^2=-e^{13}$, $de^3=e^{12}$, $de^4=0$, $de^5=-e^{46}$, $de^6=-e^{45}$.

The next two concrete examples show how we will proceed in general in the proofs
of Propositions~\ref{decomposable} and~\ref{indecomposable} below
in order to exclude candidates.
Some of the computations in these examples, as well as in Propositions~\ref{decomposable} and~\ref{indecomposable}
and in Section~\ref{sec-reduc}, were carried out using Mathematica and the differential forms package scalarEDC
by S. Bonanos available at www.inp.demokritos.gr/˜sbonano/.
%%%\cite{Bonanos}.
%%%
%%%S. Bonanos, Mathematica software, http://www.inp.demokritos.gr/˜sbonano/
%%%

\begin{example}\label{ex1}
{\rm
Let us consider the indecomposable solvable Lie algebra $\frg=A_{6,25}^{0,-1}=(e^{23},e^{26},-e^{36},0,e^{46},0)$.
Any $\rho\in Z^3(\frg)$ is given by
$$
\begin{array}{l}
\rho=a_1e^{123}+a_2e^{126}+a_3e^{136}+a_4e^{234}+a_5(e^{235}-e^{146})+a_6e^{236}+a_7e^{246}\\[4pt]
\quad\, +a_8e^{256}+a_9e^{346}+a_{10}e^{356}+a_{11}e^{456},
\end{array}
$$
for $a_1,\ldots,a_{11}\in \mathbb{R}$. Fix $\nu=e^{123456}$, and let $\tilde J_\rho^*$
be the endomorphism given by~\eqref{criterio}. A direct calculation shows that
$$
\text{tr}(\tilde J_\rho^{*2})=6(a_5^2-a_1 a_{11})^2 \geq 0.
$$
In this case it is not worth evaluating the closedness of $\tilde J_\rho^*\rho$ because by Lemma~\ref{prop}
there is no almost complex structure $J_\rho^*$ coming from a closed 3-form $\rho\in Z^3(\frg)$
and in particular $\frg$ does not admit a closed complex volume form.
}
\end{example}

\begin{example}\label{ex2}
{\rm
Let us consider the $5\oplus1$ decomposable solvable Lie algebra
$\frg=A_{5,15}^{-1}\oplus\R=(e^{15}+e^{25},e^{25},-e^{35}+e^{45},-e^{45},0,0)$.
Any $\rho\in Z^3(\frg)$ is given by
$$
\begin{array}{l}
\rho=a_1e^{125}+a_2e^{135}+a_3e^{145}+a_4e^{156}+a_5e^{235}+a_6(e^{236}-e^{146})+a_7e^{245}\\[4pt]
\quad\, +a_8e^{246}+a_9e^{256}+a_{10}e^{345}+a_{11}e^{356}+a_{12}e^{456},
\end{array}
$$
for $a_1,\ldots,a_{12}\in \mathbb{R}$. Take $\nu=e^{123456}$, and let $\tilde J_\rho^*$
be the endomorphism given by~\eqref{criterio}.
Then, we have
$$
\frac{1}{6}\text{tr}(\tilde J_\rho^{*2})=
(a_3+a_5)^2a_6^2+4(a_1 a_{10}-a_2 a_{7})a_6^2 -2(a_3-a_5)a_2 a_6 a_8 +a_2^2 a_8^2
$$
and
$$
\begin{array}{l}
d(\tilde J_\rho^*\rho)=
2 a_6^2 \left( 2a_1 e^{1256}+ a_2 (e^{1456}+e^{2356})+ (a_3+a_5) e^{2456}-2 a_{10}e^{3456} \right).
\end{array}
$$
Since the form $\tilde J_\rho^*\rho$ must be closed, we distinguish two cases depending on the vanishing of the coefficient $a_6$.
If $a_6=0$ then $\text{tr}(\tilde J_\rho^{*2})=6(a_2a_8)^2\geq 0$, and if $a_6\neq 0$ then $a_1=a_2=a_3+a_5=a_{10}=0$ and so
$\text{tr}(\tilde J_\rho^{*2})=0$. Consequently, Lemma~\ref{prop} assures that there is no almost complex structure $J_\rho^*$
admitting a non-zero closed (3,0)-form.
}
\end{example}

From now on we shall denote by $b_k(\frg)$ the dimension of the $k$-th Chevalley-Eilenberg cohomology
group $H^k(\frg)$ of the Lie algebra.
The next lemma shows a simple but useful obstruction to the existence of complex structures
with non-zero closed $(3,0)$-volume forms in the unimodular case involving $b_3$.
We remind that the unimodularity of $\frg$ is equivalent to $b_6(\frg)=1$.

\begin{lemma}\label{lema}
If  $\frg$ is an unimodular Lie algebra admitting a complex structure with a non-zero closed $(3,0)$-form $\Psi$,
then $b_3(\frg)\geq 2$.
\end{lemma}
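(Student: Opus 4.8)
The plan is to work with the real and imaginary parts of $\Psi$. Write $\Psi=\rho+\sqi\,\sigma$ with $\rho,\sigma\in\Lambda^3\frg^*$ real; closedness of $\Psi$ forces $d\rho=d\sigma=0$. Since $\Psi$ has bidegree $(3,0)$ and is non-zero, we have $\Psi=c\,\omega^1\wedge\omega^2\wedge\omega^3$ for some $c\in\C\setminus\{0\}$ and some basis $\{\omega^1,\omega^2,\omega^3\}$ of $(1,0)$-forms, so $\Psi\wedge\bar\Psi$ is a non-zero multiple of the volume form $\nu$. Using that a $3$-form squares to zero, a short sign computation gives $\Psi\wedge\bar\Psi=-2\sqi\,\rho\wedge\sigma$; hence the crucial input is that $\rho\wedge\sigma$ is a non-zero top-degree form on $\frg$, while $\rho\wedge\rho=\sigma\wedge\sigma=0$ for trivial degree reasons.

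The only place unimodularity enters is through the equivalent statement $b_6(\frg)=1$, i.e. $d(\Lambda^5\frg^*)=0$: on a unimodular Lie algebra there is no non-zero exact $6$-form, so an exact $6$-form is automatically zero. With this in hand I would show that the classes $[\rho]$ and $[\sigma]$ are linearly independent in $H^3(\frg)$, which gives $b_3(\frg)\ge2$ at once. Suppose, on the contrary, $a\rho+b\sigma=d\mu$ for some $\mu\in\Lambda^2\frg^*$ and real $a,b$ not both zero; say $b\neq0$ (the case $a\neq0$ is identical after wedging with $\sigma$ instead of $\rho$). Wedging the relation with $\rho$ and using $\rho\wedge\rho=0$ gives $b\,\sigma\wedge\rho=(a\rho+b\sigma)\wedge\rho=d\mu\wedge\rho=d(\mu\wedge\rho)$, where the last equality uses $d\rho=0$. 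The left-hand side is a non-zero multiple of $\nu$, while the right-hand side is exact, contradicting unimodularity. Hence no such relation exists and $b_3(\frg)\ge2$. Equivalently, one may phrase this as saying that the cup product $H^3(\frg)\times H^3(\frg)\to H^6(\frg)\cong\R$, restricted to the span of $[\rho]$ and $[\sigma]$, is represented by an off-diagonal $2\times2$ matrix with non-zero off-diagonal entries, hence is non-degenerate on that span, which forces the two classes to be independent.

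I do not anticipate a genuine obstacle: once the two ingredients above are recorded, the argument is purely formal manipulation in the Chevalley–Eilenberg complex. The only points needing a little care are the sign bookkeeping yielding $\Psi\wedge\bar\Psi=-2\sqi\,\rho\wedge\sigma$ and $d(\mu\wedge\rho)=d\mu\wedge\rho$, and stating explicitly that unimodularity is precisely what upgrades ``$\rho\wedge\sigma$ is exact'' to ``$\rho\wedge\sigma=0$'', since it is this implication that produces the contradiction.
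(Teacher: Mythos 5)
Your proof is correct and follows essentially the same route as the paper: both arguments show that a non-trivial linear relation $a\rho+b\sigma=d\mu$ would produce a non-zero exact $6$-form proportional to $\rho\wedge\sigma=\tfrac{\sqi}{2}\Psi\wedge\bar\Psi\neq 0$, contradicting unimodularity via $b_6(\frg)=1$. The only cosmetic difference is that the paper wedges $d\alpha$ with the single combination $-b\Psi_+ + a\Psi_-$ to treat both coefficients at once, whereas you split into the cases $b\neq0$ and $a\neq0$ and wedge with $\rho$ or $\sigma$ respectively.
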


\begin{proof}
Let $\Psi_+,\Psi_-\in\Lambda^3(\frg^*)$ be the real and imaginary parts of $\Psi$,
that is, $\Psi=\Psi_++i\,\Psi_-$. Since $\Psi$ is closed
we have that $d(\Psi_+)=d(\Psi_-)=0$ and therefore $[\Psi_+],[\Psi_-]\in H^3(\frg)$.
It is sufficient to see that both classes are non-zero
and, moreover, that they are not cohomologous.

Suppose that there exist $a,b\in\mathbb{R}$ with $a^2+b^2\not=0$ such that
$a\Psi_+ + b\Psi_-=d\alpha$ for some $\alpha\in\Lambda^2(\frg^*)$.
Since $0\neq\frac{i}{2}\Psi\wedge\bar\Psi=\Psi_+\wedge\Psi_-\in\Lambda^6(\frg^*)$,
we get
$$
d(\alpha\wedge(-b\Psi_+ + a\Psi_-))=
(a\Psi_+ + b\Psi_-)\wedge(-b\Psi_+ + a\Psi_-)=
(a^2+b^2)\Psi_+\wedge\Psi_-\not=0.
$$
But this is in contradiction to the unimodularity of $\frg$.
\end{proof}

As a consequence of Lemma~\ref{lema} we will concentrate on unimodular
(non nilpotent) solvable Lie algebras $\frak g$ with $b_3 ({\frak g})\geq 2$.
We will tackle the classification problem first in the decomposable case.

Let $\frg=\frb\oplus\frc$. %%%be a decomposable Lie algebra.
The unimodularity and solvability of $\frg$ and Lemma~\ref{lema} imply restrictions on the factors.
In fact, $\frg$ is unimodular, resp. solvable, if and only if $\frb$
and $\frc$ are unimodular, resp. solvable.
Moreover, by Lemma~\ref{lema} and the well known formula relating the cohomology
of $\frg$ with the cohomologies of the factors,
we have
\begin{equation}\label{b3}
b_3(\frb)b_0(\frc)+b_2(\frb)b_1(\frc)+b_1(\frb)b_2(\frc)+b_0(\frb)b_3(\frc)=b_3(\frg)\geq 2.
\end{equation}

\begin{proposition}\label{decomposable}
Let $\frg=\frb\oplus\frc$ be a six-dimensional decomposable unimodular (non nilpotent)
solvable Lie algebra admitting a complex structure with a non-zero closed $(3,0)$-form.
Then, $\frg$ is isomorphic to $\fre(2)\oplus\fre(1,1)$, $A_{5,7}^{-1,-1,1}\oplus\R$
or $A_{5,17}^{\alpha,-\alpha,1}\oplus\R$ with $\alpha\geq 0$.
\end{proposition}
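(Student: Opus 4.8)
The plan is to reduce the classification to a finite case-check driven by the constraints of unimodularity, solvability, the cohomological bound \eqref{b3}, and Lemma~\ref{prop}. First I would enumerate all ways a six-dimensional decomposable unimodular solvable (non-nilpotent) Lie algebra $\frg$ can split as $\frb\oplus\frc$: either $\dim\frb=3,\dim\frc=3$, or $\dim\frb=4,\dim\frc=2$, or $\dim\frb=5,\dim\frc=1$ (the $2+2+2$ and other finer splittings are special cases of these once we group factors). For each factor we only need unimodular solvable Lie algebras, and since $\frg$ is assumed non-nilpotent at least one factor must be non-nilpotent. In dimension $1,2,3$ the unimodular solvable Lie algebras are classically known: $\R$, $\R^2$, and in dimension three $\R^3$, the Heisenberg algebra $\frh_3$, $\fre(2)$, $\fre(1,1)$, and $\frg_{3.4}^{-1}$ (i.e. $\fre(1,1)$ in the other common naming) — of these only $\fre(2)$ and $\fre(1,1)$ are non-nilpotent unimodular. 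In dimension $4$ and $5$ I would invoke Mubarakzyanov's/Turkowski's lists (with the corrections in \cite{Sha}), restricting to unimodular ones.

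Next I would apply the Betti-number filter. Compute $b_0,b_1,b_2,b_3$ for each admissible factor (these are standard: e.g. for $\fre(1,1)$ one has $b_*=(1,1,1,1)$, for $\fre(2)$ $b_*=(1,1,1,1)$, for $\R^k$ the binomial numbers, for $\frh_3$ $b_*=(1,2,2,1)$). Then inequality~\eqref{b3} must hold. This immediately discards many candidates. In the $3+3$ case, since we need a non-nilpotent factor and $b_3(\frg)\ge2$, a short computation with the Künneth-type formula forces both factors to contribute, and one is led essentially to $\fre(2)$ or $\fre(1,1)$ paired with $\fre(2)$, $\fre(1,1)$, $\frh_3$, or $\R^3$; similarly for $4+2$ and $5+1$ one gets a finite shortlist of explicit Lie algebras. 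I would organize this as a table of surviving candidates.

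Then comes the main step: for each surviving candidate $\frg$, write the generic closed $3$-form $\rho\in Z^3(\frg)$ in the basis $\{e^{ijk}\}$ (its dimension is $b_3(\frg)$ plus the number of exact directions, so a handful of parameters $a_1,\dots,a_m$), compute the endomorphism $\tilde J_\rho^*$ from \eqref{criterio} with $\nu=e^{123456}$, and evaluate $\operatorname{tr}(\tilde J_\rho^{*2})$ together with the closedness condition $d(\tilde J_\rho^*\rho)=0$. By Lemma~\ref{prop}, $\frg$ is excluded unless there is a choice of parameters making $\tilde J_\rho^*\rho$ closed and $\operatorname{tr}(\tilde J_\rho^{*2})<0$ simultaneously. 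In every candidate except $\fre(2)\oplus\fre(1,1)$, $A_{5,7}^{-1,-1,1}\oplus\R$ and $A_{5,17}^{\alpha,-\alpha,1}\oplus\R$, I expect the trace to come out as a nonnegative expression (a sum of squares, as in Example~\ref{ex1}) or to be forced nonnegative once the closedness equations are imposed (as in Example~\ref{ex2}); this is where the bulk of the computational work lies, and it is the main obstacle — not conceptually hard, but one must handle each Lie algebra's structure equations carefully and, where a continuous parameter like the $A_{5,17}$-parameter appears, track how the trace and the closedness locus depend on it. Finally, for the three surviving algebras I would exhibit an explicit $\rho$ with $d(\tilde J_\rho^*\rho)=0$ and $\operatorname{tr}(\tilde J_\rho^{*2})<0$, producing the closed $(3,0)$-form $\Psi=\rho+i\,\tilde J_\rho^*\rho$ (up to the normalization in Lemma~\ref{prop}), which completes the proof; these explicit structures will also be the ones carried forward to Section~\ref{sec-reduc}.
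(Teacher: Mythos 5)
Your proposal follows essentially the same route as the paper: split into the $3\oplus3$, $4\oplus2$ (with $\R^2$ as the only possible 2-dimensional unimodular factor) and $5\oplus1$ cases, filter the candidate factors through the unimodularity/solvability assumptions and the bound \eqref{b3}, then run the stable-form criterion of Lemma~\ref{prop} on a generic $\rho\in Z^3(\frg)$ for each survivor, discarding those where $\operatorname{tr}(\tilde J_\rho^{*2})\geq0$ outright or after imposing $d(\tilde J_\rho^*\rho)=0$, and exhibiting explicit closed complex volume forms for $\fre(2)\oplus\fre(1,1)$, $A_{5,7}^{-1,-1,1}\oplus\R$ and $A_{5,17}^{\alpha,-\alpha,1}\oplus\R$. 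This matches the paper's proof, with the remaining work being exactly the case-by-case computations the paper also relegates to explicit (computer-assisted) calculation.
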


\begin{proof}
Since $\frg$ is decomposable, we divide the proof in the three cases $3\oplus3$, $4\oplus2$ and $5\oplus1$.
In the $3\oplus3$ case the inequality \eqref{b3} is always satisfied.
The $3\oplus3$ decomposable unimodular (non nilpotent) solvable Lie algebras are $\fre(2)\oplus\fre(2)$, $\fre(2)\oplus\fre(1,1)$,  $\fre(2)\oplus\frh_3$, $\fre(2)\oplus\R^3$, $\fre(1,1)\oplus\fre(1,1)$, $\fre(1,1)\oplus\frh_3$ and $\fre(1,1)\oplus\R^3$
(see Table 1 in the Appendix for a description of the Lie algebras).
An explicit computation shows that there is no $\rho\in Z^3$ satisfying the conditions $\lambda(\rho)<0$ and $d(J_\rho^*\rho)=0$,
except for $\frg=\fre(2)\oplus\fre(1,1)$.
We give an example of a closed complex volume form
for $\fre(2)\oplus\fre(1,1)$ in Table 1.

Since $\R^2$ is the only 2-dimensional unimodular Lie algebra, the $4\oplus2$ case is reduced to the study of $\frg=\frb\oplus\R^2$
for any 4-dimensional unimodular (non nilpotent) solvable Lie algebra $\frb$ satisfying \eqref{b3}, i.e.
$b_3(\frb)+2b_2(\frb)+b_1(\frb)\geq 2$. The resulting Lie algebras are: $A_{4,2}^{-2}\oplus\R^2$, $A_{4,5}^{\alpha,-1-\alpha}\oplus\R^2$ with $-1<\alpha\leq-\frac{1}{2}$, $A_{4,6}^{\alpha,-\frac{\alpha}{2}}\oplus\R^2$, $A_{4,8}\oplus\R^2$ and $A_{4,10}\oplus\R^2$ (see Table 1).
However, all of them satisfy $\lambda(\rho)\geq 0$ for any $\rho\in Z^3(\frg)$.

Finally, the $5\oplus 1$ case consists of Lie algebras of the form $\frg=\frb\oplus\R$ for any 5-dimensional unimodular (non nilpotent) solvable
Lie algebra $\frb$ such that $(b_2(\frb),b_3(\frb))\neq (0,0),(1,0),(0,1)$. Therefore, the Lie algebras are: $A_{5,7}^{-1,-1,1}\oplus\R$, $A_{5,7}^{-1,\beta,-\beta}\oplus\R$ with $0<\beta<1$, $A_{5,8}^{-1}\oplus\R$, $A_{5,9}^{-1,-1}\oplus\R$, $A_{5,13}^{-1,0,\gamma}\oplus\R$ with $\gamma>0$, $A_{5,14}^0\oplus\R$, $A_{5,15}^{-1}\oplus\R$, $A_{5,17}^{0,0,\gamma}\oplus\R$ with $0<\gamma<1$, $A_{5,17}^{\alpha,-\alpha,1}$ with $\alpha\geq 0$, $A_{5,18}^0\oplus\R$, $A_{5,19}^{-1,2}\oplus\R$, $A_{5,19}^{1,-2}\oplus\R$, $A_{5,20}^0\oplus\R$, $A_{5,26}^{0,\pm1}\oplus\R$, $A_{5,33}^{-1,-1}\oplus\R$ and $A_{5,35}^{0,-2}\oplus\R$.
The explicit computation of each case allows us to distinguish the following three situations:\\[-8pt]

\noindent $\bullet$ If $\frg=A_{5,9}^{-1,-1}\oplus\R$ or $A_{5,26}^{0,\pm 1}\oplus\R$, then $\lambda(\rho)\geq 0$ for all $\rho\in Z^3(\frg)$.\\[-8pt]

\noindent $\bullet$ The Lie algebras $A_{5,7}^{-1,-1,1}\oplus\R$ and $A_{5,17}^{\alpha,-\alpha,1}\oplus\R$
with $\alpha\geq 0$ admit a closed complex volume form (see Table 1 for a concrete example).\\[-8pt]

\noindent $\bullet$ For the rest of Lie algebras there is no $\rho\in Z^3(\frg)$ satisfying $d(J_\rho^*\rho)=0$ and $\lambda(\rho)<0$
simultaneously.\\[-8pt]

\noindent In conclusion, in the $5\oplus 1$ case the only possibilities are $A_{5,7}^{-1,-1,1}\oplus\R$
and the family $A_{5,17}^{\alpha,-\alpha,1}\oplus\R$ with $\alpha\geq 0$.
\end{proof}

%%%%%%%%%%%%%%%%%%%%%%%%%%%%%%%%%%%%%%% The indecomposable case

Next we obtain the classification in the indecomposable case.

\begin{proposition}\label{indecomposable}
Let $\frg$ be a six-dimensional indecomposable unimodular (non nilpotent) solvable Lie algebra
admitting a complex structure with a non-zero closed $(3,0)$-form. Then, $\frg$ is isomorphic to $N_{6,18}^{0,-1,-1}$,
$A_{6,37}^{0,0,1}$, $A_{6,82}^{0,1,1}$, $A_{6,88}^{0,0,1}$, $B_{6,4}^1$ or $B_{6,6}^1$.
\end{proposition}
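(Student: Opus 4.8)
The plan is to mirror the strategy used for the decomposable case in Proposition~\ref{decomposable}, but now carried out over Shabanskaya's corrected list of six-dimensional indecomposable unimodular solvable Lie algebras, together with Turkowski's families $N_{6,k}$ and $B_{6,k}$ (see the Appendix). The first step is a drastic reduction: by Lemma~\ref{lema} only those $\frg$ with $b_3(\frg)\geq 2$ need to be examined, so I would first go through the classification tables and compute the third Chevalley--Eilenberg Betti number $b_3(\frg)$ for each algebra (for the one- and two-parameter families this is a finite computation since $b_3$ jumps only on a locally closed subset of parameter space). This eliminates the overwhelming majority of candidates outright and leaves a manageable finite list of algebras, plus finitely many constraint subvarieties inside the surviving parameter families.

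For each surviving candidate $\frg$ the procedure is exactly the one illustrated in Examples~\ref{ex1} and~\ref{ex2}: write down the general closed $3$-form $\rho\in Z^3(\frg)$ in the fixed basis $\{e^i\}$ with undetermined coefficients $a_1,\dots,a_m$, fix $\nu=e^{123456}$, form the endomorphism $\tilde J_\rho^*$ via~\eqref{criterio}, and compute the polynomial $\mathrm{tr}(\tilde J_\rho^{*2})$ together with the $4$-form $d(\tilde J_\rho^*\rho)$. By Lemma~\ref{prop}, $\frg$ admits a complex structure with non-zero closed $(3,0)$-form if and only if the system $\{\,d(\tilde J_\rho^*\rho)=0,\ \mathrm{tr}(\tilde J_\rho^{*2})<0\,\}$ has a solution. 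In practice one imposes the linear-in-the-quadratic-monomials equations $d(\tilde J_\rho^*\rho)=0$ first, substitutes back, and then inspects the sign of $\mathrm{tr}(\tilde J_\rho^{*2})$: for the algebras to be excluded one shows that on the zero locus of $d(\tilde J_\rho^*\rho)$ the trace is forced to be a sum of squares (hence $\geq 0$), exactly as in the two worked examples; for the six algebras that survive one simply exhibits an explicit $\rho$ with $\mathrm{tr}(\tilde J_\rho^{*2})<0$ and $\tilde J_\rho^*\rho$ closed, and records it (as promised, in Table~1), so that the closed complex $(3,0)$-form $\Psi=\rho+i\,\tilde J_\rho^*\rho$ is displayed. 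These computations are routine but voluminous, so I would note (as the authors do) that they were carried out with Mathematica and Bonanos's \texttt{scalarEDC} package.

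The main obstacle is organizational and computational rather than conceptual: the indecomposable list is long and riddled with one- and two-parameter families ($A_{6,k}^{\alpha,\beta,\gamma}$, $N_{6,k}^{\ldots}$, $B_{6,k}^{\alpha}$), so the delicate part is the case analysis \emph{inside} each family. There the closedness condition $d(\tilde J_\rho^*\rho)=0$ typically splits into subcases according to whether certain coefficients of $\rho$ (the analogues of $a_6$ in Example~\ref{ex2}) vanish, and the parameters of the Lie algebra may interact with these vanishing conditions; one must be careful that the special parameter values where $b_3$ jumps, or where the structure constants degenerate, are all treated. A secondary subtlety is bookkeeping of isomorphisms: several Turkowski/Shabanskaya algebras with different labels are isomorphic, and one must make sure the final six representatives $N_{6,18}^{0,-1,-1}$, $A_{6,37}^{0,0,1}$, $A_{6,82}^{0,1,1}$, $A_{6,88}^{0,0,1}$, $B_{6,4}^1$, $B_{6,6}^1$ are pairwise non-isomorphic and that no surviving candidate was missed by a relabeling; this is handled by the standard invariants (dimension of the nilradical, structure of the derivation, Betti numbers) recorded alongside the list. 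Once every case is dispatched, the two bullet-type outcomes — ``$\mathrm{tr}(\tilde J_\rho^{*2})\geq 0$ identically on $\ker d(\tilde J_\rho^*\rho)$'' for the excluded algebras and ``explicit closed $\Psi$ exists'' for the six survivors — together with Lemma~\ref{prop} give the stated classification.
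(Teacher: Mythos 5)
Your proposal follows essentially the same route as the paper: restrict via Lemma~\ref{lema} to the unimodular indecomposable algebras with $b_3\geq 2$ (those tabulated in the Appendix), then apply the stable-form criterion of Lemma~\ref{prop} case by case — showing $\lambda(\rho)\geq 0$ or incompatibility of $d(J_\rho^*\rho)=0$ with $\lambda(\rho)<0$ for the excluded algebras, and exhibiting an explicit closed $(3,0)$-form for the six survivors — with the computations done by computer algebra exactly as in Examples~\ref{ex1} and~\ref{ex2}. The only slip is cosmetic: the explicit forms for the indecomposable case are recorded in Table~2, not Table~1.
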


\begin{proof}
The Lie algebras $\frg$ such that $b_3(\frg)\geq 2$ are listed in Table 2 in the Appendix.
The indecomposable case is long to analyze because of the amount of Lie algebras,
but after performing the computations we distinguish the following three situations:\\[-8pt]

\noindent $\bullet$ Let $\frg$ be one of the following Lie algebras: $A_{6,13}^{a,-2a,2a-1}$ ($a\in \mathbb{R}-\{-1,0,\frac{1}{3},\frac{1}{2}\}$), $A_{6,13}^{a,-a,-1}$ ($a>0$, $a\neq 1$), $A_{6,14}^{\frac{1}{3},-\frac{2}{3}}$, $A_{6,18}^{a,b}$ with $(a,b)\in\{(-\frac{1}{2},-2),(-2,1)\}$, $A_{6,25}^{a,b}$ with $(a,b)\in\{(0,-1),(-\frac{1}{2},-\frac{1}{2})\}$, $A_{6,32}^{0,b,-b}$ ($b>0$), $A_{6,34}^{0,0,\epsilon}$ ($\epsilon=0,1)$, $A_{6,35}^{a,b,c}$ with $a>0$ and $(b,c)\in\{(-2a,a),(-a,0)\}$ and $A_{6,37}^{0,0,c}$ ($c>0$, $c\neq1$).
Then, $\lambda(\rho)\geq 0$ for any $\rho\in Z^3(\frg)$.\\[-8pt]

\noindent $\bullet$ The Lie algebras $N_{6,18}^{0,-1,-1}, A_{6,37}^{0,0,1}, A_{6,82}^{0,1,1}, A_{6,88}^{0,0,1}, B_{6,4}^1$ and $B_{6,6}^{1}$ admit a non-zero closed (3,0)-form (see Table 2 for a concrete example).\\[-8pt]

\noindent $\bullet$ For the rest of Lie algebras there is no $\rho\in Z^3(\frg)$ such that $d(J_\rho^*\rho)=0$ and $\lambda(\rho)<0$.
\end{proof}

From Propositions~\ref{decomposable} and~\ref{indecomposable} it follows the following
classification:

\begin{theorem}\label{main-th}
Let $\frg$ be an unimodular (non nilpotent) solvable Lie algebra of dimension $6$.
Then, $\frg$ admits a complex structure with a non-zero closed $(3,0)$-form
if and only if it is isomorphic to one in the following list:
%%%\footnote{$B_{6,4}^{-}$ is isomorphic to $B_{6,4}^{+}$}
$$
\begin{array}{lll}\label{teorema}
&&\frg_1=A_{5,7}^{-1,-1,1}\!\oplus\R = (e^{15},-e^{25},-e^{35},e^{45},0,0) ,\\[4pt]
&&\frg_2^{\alpha}=A_{5,17}^{\alpha,-\alpha,1}\!\oplus\R= (\alpha e^{15}\!\!+\!e^{25},-e^{15}\!\!+\!\alpha e^{25},-\alpha e^{35}\!\!+\!e^{45},
-e^{35}\!\!-\!\alpha e^{45},0,0),
\ \alpha\geq 0,\\[4pt]
&&\frg_3=\fre(2)\oplus\fre(1,1) = (0,-e^{13},e^{12},0,-e^{46},-e^{45}) ,\\[4pt]
&&\frg_4=A_{6,37}^{0,0,1} = (e^{23},-e^{36},e^{26},-e^{56},e^{46},0) ,\\[4pt]
&&\frg_5=A_{6,82}^{0,1,1} = (e^{24}+e^{35},e^{26},e^{36},-e^{46},-e^{56},0) ,\\[4pt]
&&\frg_6=A_{6,88}^{0,0,1} = (e^{24}+e^{35},-e^{36},e^{26},-e^{56},e^{46},0) ,\\[4pt]
&&\frg_7=B_{6,6}^{1} = (e^{24}+e^{35},e^{46},e^{56},-e^{26},-e^{36},0) ,\\[4pt]
&&\frg_8=N_{6,18}^{0,-1,-1} = (e^{16}-e^{25},e^{15}+e^{26},-e^{36}+e^{45},-e^{35}-e^{46},0,0) ,\\[4pt]
&&\frg_9=B_{6,4}^1 = (e^{45},e^{15}+e^{36},e^{14}-e^{26}+e^{56},-e^{56},e^{46},0) .
\end{array}
$$
\end{theorem}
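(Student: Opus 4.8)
The plan is to combine the two classification results just proved, Propositions~\ref{decomposable} and~\ref{indecomposable}, with the obstruction from Lemma~\ref{lema} and the dictionary between the paper's naming conventions for Lie algebras and the explicit structure equations displayed in the statement. First I would invoke Proposition~\ref{holglobalform} and the discussion immediately following it to reduce the problem to the purely algebraic question: a $6$-dimensional solvmanifold $\Gamma\backslash G$ carries an invariant complex structure with holomorphically trivial canonical bundle exactly when the Lie algebra $\frg$ of $G$ is unimodular, solvable, and admits a complex structure with a non-zero closed $(3,0)$-form. (Unimodularity is forced by the existence of a lattice, via \cite{Milnor}.) The theorem as stated is then simply the amalgamation of the decomposable and indecomposable lists.

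The key steps, in order, are the following. (1) If $\frg$ is nilpotent it is excluded by hypothesis (``non nilpotent''), so $\frg$ is either decomposable or indecomposable solvable but not nilpotent. (2) In the decomposable case, Proposition~\ref{decomposable} gives that $\frg$ is isomorphic to $\fre(2)\oplus\fre(1,1)$, $A_{5,7}^{-1,-1,1}\oplus\R$, or $A_{5,17}^{\alpha,-\alpha,1}\oplus\R$ with $\alpha\geq0$; these are exactly $\frg_3$, $\frg_1$, and $\frg_2^\alpha$ in the list, and the displayed structure equations are obtained by writing out the brackets of these standard Lie algebras in a suitable basis $\{e^i\}$ (using the conventions recalled after Lemma~\ref{prop} and Table~1 of the Appendix). (3) In the indecomposable case, Proposition~\ref{indecomposable} gives that $\frg$ is isomorphic to one of $N_{6,18}^{0,-1,-1}$, $A_{6,37}^{0,0,1}$, $A_{6,82}^{0,1,1}$, $A_{6,88}^{0,0,1}$, $B_{6,4}^1$ or $B_{6,6}^1$; these are $\frg_8,\frg_4,\frg_5,\frg_6,\frg_9$ and $\frg_7$ respectively, again with the structure equations read off from Turkowski's/Shabanskaya's tables (Table~2 of the Appendix). (4) Conversely, for each of $\frg_1,\dots,\frg_9$ one exhibits an explicit closed $(3,0)$-form, or equivalently a $\rho\in Z^3(\frg)$ with $\lambda(\rho)<0$ and $d(J_\rho^*\rho)=0$; these examples are collected in Tables~1 and~2, so the ``if'' direction is already covered by the cited propositions. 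The theorem then follows by taking the union of the two lists and relabelling.

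I would present this essentially as a one-paragraph deduction: ``By Proposition~\ref{holglobalform} and the remark following it, together with \cite{Milnor}, $\frg$ must be unimodular solvable. If $\frg$ is non-nilpotent and decomposable, Proposition~\ref{decomposable} applies; if indecomposable, Proposition~\ref{indecomposable} applies. Identifying $A_{5,7}^{-1,-1,1}\oplus\R=\frg_1$, $A_{5,17}^{\alpha,-\alpha,1}\oplus\R=\frg_2^\alpha$, $\fre(2)\oplus\fre(1,1)=\frg_3$, $A_{6,37}^{0,0,1}=\frg_4$, $A_{6,82}^{0,1,1}=\frg_5$, $A_{6,88}^{0,0,1}=\frg_6$, $B_{6,6}^1=\frg_7$, $N_{6,18}^{0,-1,-1}=\frg_8$ and $B_{6,4}^1=\frg_9$, and reading the structure equations from Tables~1 and~2, yields the list. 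The converse is witnessed by the explicit closed $(3,0)$-forms in those tables.''

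\textbf{Main obstacle.} The genuine mathematical content has already been discharged in Propositions~\ref{decomposable} and~\ref{indecomposable}; the only thing left is bookkeeping, and the one place where care is needed is verifying that the abstract Lie algebras produced by those propositions are indeed isomorphic to the ones presented via the concrete coframe equations $\frg_1,\dots,\frg_9$ — i.e. checking the basis changes that bring $A_{5,7}^{-1,-1,1}\oplus\R$, $A_{5,17}^{\alpha,-\alpha,1}\oplus\R$, $\fre(2)\oplus\fre(1,1)$, $A_{6,37}^{0,0,1}$, $A_{6,82}^{0,1,1}$, $A_{6,88}^{0,0,1}$, $B_{6,6}^1$, $N_{6,18}^{0,-1,-1}$ and $B_{6,4}^1$ into the displayed forms, and ensuring that distinct parameter values of $\frg_2^\alpha$ really give non-isomorphic algebras (so the family is not redundant). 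This is routine linear algebra on the structure constants, but it is the step that must be gotten right for the statement to be literally correct.
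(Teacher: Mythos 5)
Your proposal is correct and coincides with the paper's own (very short) argument: Theorem~\ref{main-th} is stated there as an immediate consequence of Propositions~\ref{decomposable} and~\ref{indecomposable}, with the explicit closed $(3,0)$-forms of Tables~1 and~2 giving the converse, exactly as you describe. The only extra content you add is the explicit remark about matching the standard names to the displayed structure equations, which is the same routine relabelling the paper performs implicitly.
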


From now on, we will use only the simplified notation $\frg_1,\frg_2^{\alpha},\frg_3,\ldots,\frg_9$
when refering to the Lie algebras listed in the previous theorem.

\subsection{Existence of lattices}\label{subsect-lattices}

In this section we show that the simply-connected solvable Lie groups $G_k$ corresponding to the Lie algebras $\frg_k$
in Theorem~\ref{main-th} admit lattices $\Gamma_k$ of maximal rank. Therefore, we get compact complex
solvmanifolds $\Gamma_k\backslash G_k$ with holomorphically trivial canonical bundle.

Let $H$ be a $n$-dimensional nilpotent Lie group. We remind that a  connected and simply-connected Lie group $G$ with nilradical $H$ is called  \emph{almost nilpotent} (resp. \emph{almost abelian}) if it can be written as $G=\R\ltimes_\mu H$ (resp. $G=\R\ltimes_\mu \R^n$, that is, $H=\R^n$).
If we denote by $e$ the identity element of $H$ then following \cite{Bock} we have that $d_e(\mu(t))=\exp^{{\rm GL}(n,\R)}(t\varphi)$, $\varphi$ being a certain derivation of the Lie algebra $\frh$ of $H$. Although it is not easy in general to know whether a solvable Lie group $G$ admits a lattice, the next result allows to construct one in the case that $G$ is almost nilpotent.

\begin{lemma}\label{lema_lattice}\cite{Bock}
Let $G=\R\ltimes_\mu H$ be a $(n+1)$-dimensional almost nilpotent Lie group with nilradical $H$ and $\frh$ the Lie algebra of $H$.
If there exists $0\neq t_1\in\R$ and a rational basis $\{X_1,\ldots,X_n\}$ of $\mathfrak{h}$ such that the coordinate matrix of $d_e(\mu(t_1))$ in such basis is integer, then $\Gamma=t_1\mathbb{Z}\ltimes_\mu \exp^H(\mathbb{Z}\langle X_1,\ldots,X_n\rangle)$ is a lattice in $G$.
\end{lemma}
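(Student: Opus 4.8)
The plan is to reduce the statement to a standard fact about lattices in semidirect products $\R\ltimes_\mu H$, namely that a lattice is obtained by intersecting a lattice of $\R$ with a compatible lattice of $H$ provided the flow $\mu$ preserves the latter up to the discrete subgroup we choose in $\R$. First I would unwind the definitions: write $\Gamma_0 = t_1\Z$, a lattice in $\R$, and $\Gamma_1 = \exp^H(\Z\langle X_1,\dots,X_n\rangle)$. Since $\{X_1,\dots,X_n\}$ is a \emph{rational} basis of $\frh$ (rational with respect to the rational structure coming from the structure constants of $\frh$), and $H$ is nilpotent and simply connected, the exponential map is a diffeomorphism and $\Gamma_1$ is a lattice in $H$ by the Malcev criterion (a simply connected nilpotent Lie group admits a lattice iff its Lie algebra has a rational structure, and $\exp$ of a rational lattice of the Lie algebra is a lattice in the group). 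This is the point where one invokes the rationality hypothesis on the basis.

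The key step is then to check that $\Gamma = \Gamma_0 \ltimes_\mu \Gamma_1$ is actually a subgroup of $G = \R\ltimes_\mu H$, i.e.\ that it is closed under the twisted multiplication. The group law in $G$ reads $(t,h)\cdot(t',h') = (t+t', h\,\mu(t)(h'))$. For $\Gamma$ to be closed we need $\mu(t_1)(\Gamma_1)\subseteq \Gamma_1$ (and, since we must also handle inverses, $\mu(-t_1)(\Gamma_1)\subseteq\Gamma_1$, hence $\mu(t_1)(\Gamma_1)=\Gamma_1$). Now $\mu(t_1)\colon H\to H$ is an automorphism whose differential at $e$ is $d_e(\mu(t_1)) = \exp^{\GL(n,\R)}(t_1\varphi)$; by hypothesis this differential has an \emph{integer} coordinate matrix in the basis $\{X_1,\dots,X_n\}$, so it maps the lattice $\Z\langle X_1,\dots,X_n\rangle\subset\frh$ into itself. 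Because $H$ is simply connected nilpotent, $\mu(t_1) = \exp^H\circ\, d_e(\mu(t_1))\circ(\exp^H)^{-1}$, so $\mu(t_1)$ carries $\Gamma_1 = \exp^H(\Z\langle X_1,\dots,X_n\rangle)$ into itself; applying the same reasoning to $\mu(-t_1)=\mu(t_1)^{-1}$, whose differential is the inverse integer matrix — which need not be integral! — one instead argues directly that an injective endomorphism of a finitely generated group onto... Actually the cleaner route: $d_e(\mu(t_1))$ integral means $\mu(t_1)(\Gamma_1)\subseteq\Gamma_1$; since $\mu$ is a one-parameter group, $\mu(kt_1)(\Gamma_1)\subseteq\Gamma_1$ for all $k\ge 0$, and one defines $\Gamma$ using only the relations that make it a subgroup, checking associativity and that $(t_1,e)^{-1}=(-t_1,\mu(-t_1)(e))=(-t_1,e)$ causes no problem because conjugation relations only ever push elements of $\Gamma_1$ forward under nonnegative powers of $\mu(t_1)$ when you normal-form a word — so $\Gamma$ is a subgroup.

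Having established that $\Gamma$ is a discrete subgroup, the final step is to verify cocompactness: $G/\Gamma$ is compact. This follows because the projection $G\to\R$ induces a fibration $G/\Gamma \to \R/t_1\Z = S^1$ with fiber $H/\Gamma_1$, and $H/\Gamma_1$ is compact since $\Gamma_1$ is a lattice in $H$; a fiber bundle over a compact base with compact fiber has compact total space. Discreteness of $\Gamma$ in $G$ is immediate from discreteness of $\Gamma_0$ in $\R$ and of $\Gamma_1$ in $H$ together with the product structure of $G$ as a manifold. Hence $\Gamma$ is a lattice in $G$.

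The main obstacle — or rather the subtle point one must not gloss over — is the interplay between the two rationality/integrality hypotheses and the direction of the flow: the basis must be rational so that $\Gamma_1$ is a genuine lattice in $H$ (Malcev), while the \emph{integrality} of $d_e(\mu(t_1))$ in that \emph{same} basis is exactly what makes $\Gamma_1$ invariant under the monodromy $\mu(t_1)$, so that the semidirect product of the two lattices is a subgroup; getting the asymmetry in $\pm t_1$ right (only forward iterates of $\mu(t_1)$ occur when reducing group words to normal form) is the place where a careless argument would slip. Since this lemma is quoted verbatim from \cite{Bock}, in the paper one would simply cite it, but the sketch above is how the proof goes.
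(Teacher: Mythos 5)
You should first note that the paper itself contains no proof of this lemma: it is quoted verbatim from \cite{Bock}, so the only meaningful comparison is with the standard argument, whose skeleton you have correctly identified (a lattice $t_1\Z$ in $\R$, a lattice $\Gamma_1$ in $H$, invariance of $\Gamma_1$ under the monodromy $\mu(t_1)$, and cocompactness via the fibration of $G/\Gamma$ over $\R/t_1\Z$ with fiber $H/\Gamma_1$). However, two of your steps are genuinely flawed as written. First, the claim that ``$\exp$ of a rational lattice of the Lie algebra is a lattice in the group'' is false: for a rational basis the set $\exp^H(\Z\langle X_1,\ldots,X_n\rangle)$ need not even be a subgroup. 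In the Heisenberg algebra with $[X,Y]=Z$ one has $\exp(X)\exp(Y)=\exp(X+Y+\tfrac12 Z)$, which leaves the image of the $\Z$-span. What Malcev--Raghunathan theory gives is that the subgroup \emph{generated} by $\exp$ of such a lattice is a lattice in $H$; the image $\exp^H(L)$ is itself a group only when $L$ is stable under the Campbell--Hausdorff product, which requires integrality/divisibility conditions on the structure constants, not mere rationality. This point has to be addressed (it is implicitly built into Bock's formulation), not deduced from the bare rationality hypothesis.

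Second, and more seriously, your handling of inverses does not work. The set $\Gamma=t_1\Z\ltimes_\mu\Gamma_1$ is a prescribed subset of $G$, not a group given by generators and relations, so there is no ``normal form'' in which only nonnegative powers of $\mu(t_1)$ act: closure under the product forces, from $(-t_1,e)\cdot(0,h)=(-t_1,\mu(-t_1)(h))$, that $\mu(-t_1)(\Gamma_1)\subseteq\Gamma_1$, hence that $\mu(t_1)$ restricts to an automorphism of $\Gamma_1$, i.e.\ the integer matrix must be invertible over $\Z$. If $\mu(t_1)(\Gamma_1)\subsetneq\Gamma_1$ (integer matrix of determinant $>1$), then $\Gamma$ is simply not a subgroup, and the subgroup it generates need not be discrete (already for $n=1$, $d_e\mu(t_1)=(2)$ produces the dyadic rationals in the fiber). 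The missing ingredient that repairs this is unimodularity: since $d_e(\mu(t_1))=\exp^{\GL(n,\R)}(t_1\varphi)$, its determinant is $e^{t_1\,\mathrm{tr}\,\varphi}$, and for the groups to which the lemma is applied (which must be unimodular to carry any lattice, as all Lie algebras in the paper are) one has $\mathrm{tr}\,\varphi=0$; hence the integer matrix lies in $\mathrm{SL}(n,\Z)$, its inverse is again integer, and both $\mu(t_1)$ and $\mu(-t_1)$ preserve $\Gamma_1$. With these two repairs, the remaining parts of your argument---compatibility of $\mu(t_1)$ with $\exp^H$ via $\mu(t_1)=\exp^H\circ d_e(\mu(t_1))\circ(\exp^H)^{-1}$, discreteness, and compactness of the total space of the fibration over $S^1$ with fiber $H/\Gamma_1$---are correct.
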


Now we will use the former lemma to prove the following result:

\begin{proposition}\label{existence-of-lattices}
For any $k\not=2$, the connected and simply-connected Lie group $G_k$ with underlying Lie algebra $\frg_k$ admits a lattice.

For $k=2$, there exists a countable number of distinct $\alpha$'s, including $\alpha=0$, for which the
connected and simply-connected Lie group with underlying Lie algebra $\frg_2^{\alpha}$ admits a lattice.
\end{proposition}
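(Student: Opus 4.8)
The plan is to apply Lemma~\ref{lema_lattice} systematically to each $\frg_k$, after first identifying the nilradical $\frh_k$ and expressing $G_k$ as an almost nilpotent group $\R\ltimes_\mu H_k$. For each $k\ne 2$ one reads off from the structure equations in Theorem~\ref{main-th} which coordinate (typically $e^5$ or $e^6$, the closed $1$-form generating the $\R$-factor of the semidirect product) plays the role of the ``time'' direction $t$, and the remaining five generators span the nilradical. One then computes the derivation $\varphi=\mathrm{ad}_{X_0}|_{\frh_k}$ (equivalently, the matrix of $d e^i$ restricted to the nilradical) and its flow $d_e(\mu(t))=\exp(t\varphi)$. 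For the algebras $\frg_1,\frg_4,\frg_5,\frg_6,\frg_7,\frg_8,\frg_9$ the nilradical is either abelian ($\R^5$, the almost abelian case) or the obvious nilpotent algebra appearing in the structure equations, so the task reduces to finding, for a suitable $t_1\ne 0$, a basis of $\frh_k$ in which $\exp(t_1\varphi)$ has integer entries.

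The key device for the non-abelian/semisimple-looking blocks is the standard one: when $\varphi$ is (conjugate to) a rotation-type block $\begin{pmatrix}\cos t & -\sin t\\ \sin t & \cos t\end{pmatrix}$ or a hyperbolic block $\mathrm{diag}(e^t,e^{-t})$, one exploits that $\exp(\varphi)$ at the right value of $t$ is conjugate over $\Q$ to an integer matrix. Concretely: for a rotation by $2\pi/m$ ($m\in\{3,4,6\}$) the matrix is conjugate to a companion matrix of a cyclotomic polynomial, hence $\SL(2,\Z)$-conjugate to an integer matrix; for the hyperbolic pieces (as in $\frg_1=A_{5,7}^{-1,-1,1}\oplus\R$ and the $\fre(1,1)$ summand of $\frg_3$) one chooses $t_1=\log\lambda$ where $\lambda$ is a unit of a real quadratic field so that $\mathrm{diag}(\lambda,\lambda^{-1})$ is conjugate to a hyperbolic element of $\SL(2,\Z)$ (e.g. $\begin{pmatrix}2&1\\1&1\end{pmatrix}$). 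For $\frg_8=N_{6,18}^{0,-1,-1}$, the Lie algebra underlying the Nakamura manifold, the existence of a lattice is already known from the literature (Nakamura, and \cite{Bock}); one cites it or reproduces the short argument via a suitable complex multiplication on $\C^2$. In each case, once the integer matrix $d_e(\mu(t_1))$ and the rational basis are exhibited, Lemma~\ref{lema_lattice} delivers $\Gamma_k$ directly.

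For $k=2$, i.e. $\frg_2^\alpha=A_{5,17}^{\alpha,-\alpha,1}\oplus\R$, the nilradical is $\R^5$ and the relevant derivation on $\R^4$ is, in a suitable basis, $\varphi_\alpha=\begin{pmatrix}\alpha\,\mathrm{Id} & -\mathrm{Id}\\ \mathrm{Id} & \alpha\,\mathrm{Id}\end{pmatrix}$ acting on $\R^2\oplus\R^2$, whose exponential $\exp(t\varphi_\alpha)=e^{t\alpha}R(t)\oplus e^{t\alpha}R(t)$ mixes a dilation $e^{t\alpha}$ with a rotation $R(t)$. For this to be conjugate to an integer matrix one needs both a nontrivial unit and a root-of-unity condition simultaneously, which forces $\alpha$ to lie in a prescribed countable set: the argument is that $2e^{t\alpha}\cos t$ and $e^{2t\alpha}$ must both be integers (the trace and determinant of an integer $2\times2$ block raised to the appropriate power), and the set of $(t_1,\alpha)$ solving this is countable; the value $\alpha=0$ works with $t_1=2\pi$ (or $\pi/2$, etc.), giving a pure rotation of finite order, hence an integer matrix, so $\alpha=0$ is included. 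One can make the countable family explicit by taking, for each integer $n\ge 1$ and each admissible period, $t_1=2\pi n$ together with $\alpha$ chosen so that $e^{t_1\alpha}$ is a quadratic unit; this parallels \cite[Prop.~8.7]{Wi}, which is exactly why no lattice can be expected for arbitrary real $\alpha$.

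The main obstacle is the bookkeeping in the $k=2$ case: proving not merely that \emph{some} $\alpha\ne 0$ works but that the admissible set is genuinely countably infinite and contains $\alpha=0$, and arranging the rational basis so that the rotational and dilational parts are simultaneously integral. For the remaining algebras the only real work is the linear-algebra normalization of each $\exp(t_1\varphi)$ to integer form; the non-abelian nilradicals (in $\frg_5,\frg_6,\frg_7,\frg_9$) require a little care to check that the chosen basis is genuinely rational for the \emph{bracket} of $\frh_k$, not just that the matrix is integral, but this is routine once the structure constants are written out.
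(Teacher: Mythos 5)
For $k\neq 2$ your plan is essentially the paper's: express $G_k$ as $\R\ltimes_\mu H$, make $\exp(t_1\varphi)$ integral in a rational basis and invoke Lemma~\ref{lema_lattice}, quoting the literature where the work is already done (the paper cites Bock for $\frg_1,\frg_2^0,\frg_4,\ldots,\frg_7$, Yamada for $\frg_8$, and only carries out the explicit computation for $\frg_9$, whose nilradical $\frg_{5,3}$ does not appear in Bock's list). One slip: you list $\frg_8$ among the algebras whose nilradical is ``$\R^5$ or the obvious nilpotent algebra'' to which the exponential method applies, but $\frg_8$ is \emph{not} almost nilpotent (its nilradical has dimension $4$), so Lemma~\ref{lema_lattice} is not available there and the case must be settled by citation, as you in fact do a few lines later. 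Also, for $\frg_9$ the ``routine'' normalization is the real content of that case (characteristic polynomial $(\lambda^2-2\lambda\cos t+1)^2$, $t_1=\pi$, and a basis that is rational for the nonabelian bracket), so it should not be waved through.

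The genuine gap is in the case $k=2$, $\alpha>0$, which is the main new content of the proposition. Your derivation is written incorrectly: from the structure equations (or simply from unimodularity, which forces $\mathrm{tr}\,\varphi_\alpha=0$) the exponential is $e^{-\alpha t}R(t)\oplus e^{\alpha t}R(t)$, not $e^{\alpha t}R(t)\oplus e^{\alpha t}R(t)$. As a consequence, your stated criterion --- that $2e^{t\alpha}\cos t$ and $e^{2t\alpha}$ be integers, being the trace and determinant of an integral $2\times 2$ block --- cannot work: the two blocks have determinants $e^{2\alpha t}$ and $e^{-2\alpha t}$, so they can never \emph{separately} be conjugated to integer matrices unless $\alpha=0$. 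The correct mechanism, which the paper implements, is to choose $t_l=l\pi$ (so the rotation is $\pm\mathrm{Id}$) and observe that the full $4\times 4$ matrix has characteristic polynomial $\bigl(\lambda^2-(-1)^l(e^{\alpha t_l}+e^{-\alpha t_l})\lambda+1\bigr)^2$; one only needs $e^{\alpha t_l}+e^{-\alpha t_l}=m\in\Z$, i.e. $\alpha=\alpha_{l,m}=\frac{1}{l\pi}\log\frac{m+\sqrt{m^2-4}}{2}$, and the integrality is then achieved by an explicit \emph{rational} change of basis that pairs an expanding and a contracting direction in each invariant $2$-plane (an explicit $P$ with $e^{t_lB_\alpha}=P^{-1}C_{l,m}P$ and $C_{l,m}$ integral), exactly as for $\mathrm{diag}(\lambda,\lambda^{-1})$ versus a hyperbolic element of $\mathrm{SL}(2,\Z)$. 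Your later remark that $e^{t_1\alpha}$ should be a quadratic unit is the right idea, but it contradicts your own integrality condition (for such a unit $e^{2t_1\alpha}\notin\Z$ in general), and you never exhibit the rational basis on which Lemma~\ref{lema_lattice} is to be applied --- that conjugation is the substantive step, and without it the countable family of admissible $\alpha$'s (and hence the second half of the statement) is not established.
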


\begin{proof}
The Lie algebra $\frg_8$ is not almost nilpotent, but its corresponding connected and simply-connected Lie group $G_8$
admits a lattice by \cite{Y}. It is not hard to see that for $k\neq8$ the Lie algebra $\frg_k$ of Theorem~\ref{main-th}
is either almost nilpotent or a product of almost nilpotent Lie algebras.
In fact, we find the following correspondence with some of the Lie algebras studied in \cite{Bock}
(we use the notation in that paper in order to compare directly with the Lie algebras therein): $\frg_1\cong \frg_{5,7}^{-1,-1,1}\oplus\R$, $\frg_2^{0}\cong \frg_{5,17}^{0,0,1}\oplus\R$, $\frg_3\cong\frg_{3,5}^0\oplus\frg_{3,4}^{-1}$, $\frg_4\cong\frg_{6,37}^{0,0,-1}$, $\frg_5\cong\frg_{6,88}^{0,-1,0}$, $\frg_6\cong\frg_{6,92}^{0,-1,-1}$ and $\frg_7\cong\frg_{6,92}^*$.
The simply connected Lie group  $G_3$ admits a lattice, since it is product of two  $3$-dimensional  Lie groups and  every     $3$-dimensional  completely solvable simply  connected Lie group has a lattice.
For $\frak g_1,  \frg_2^{0}, \frak g_4, \frak g_5, \frak g_6$ and $\frak g_7$, the existence of lattices in the corresponding Lie groups is already proved in~\cite{Bock}. In fact, a lattice is respectively given by:
$$
\begin{array}{l}
\Gamma_1 = (t_1 \Z \ltimes_{\mu} exp^{H} ({\Z} \langle e_1, \ldots, e_4\rangle) )\times \Z,  \, \,  H = \R^4, \, t_1 =  \log \left ( \frac{3 + \sqrt{5}}{2} \right );\\[3pt]
\Gamma_2^0 =  (t_1 \Z \ltimes_{\mu} exp^{H} ({\Z} \langle e_1, \ldots, e_4\rangle) )\times \Z,  \, \,   H = \R^4, \,  t_1 =  \pi;\\[3pt]
\Gamma_4 =   t_1 \Z \ltimes_{\mu} exp^{H} ({\Z} \langle e_1, \ldots, e_5\rangle),  \, \,  H =  {\mbox {Heis}}_3 \times \R^2,  \,  t_1 = \pi;\\[3pt]
\Gamma_5 =  t_1 \Z \ltimes_{\mu} exp^{H} ({\Z} \langle  - \frac{1}{\sqrt 5} e_1,  \frac{1}{\sqrt 5} (e_3 - e_5),  \alpha e_3 - \frac{1}{5 \alpha}  e_5,   \frac{1}{\sqrt 5} (e_2 - e_4),  \alpha e_2 - \frac{1}{5 \alpha} e_4\rangle), \\[1pt]
 \alpha =  \frac{\sqrt{5}(3 + \sqrt{5})}{10}, \, \, H = {\mbox {Heis}}_5, \,  t_1 = \log\left (\frac{3+\sqrt 5}{2} \right);\\[3pt]
\Gamma_6 =    t_1 \Z \ltimes_{\mu} exp^{H} ({\Z} \langle e_1, \ldots, e_5\rangle),  \, \,  H = {\mbox {Heis}}_5,  \, t_1 = \pi; \\[3pt]
\Gamma_7 =   t_1 \Z \ltimes_{\mu} exp^{H} ({\Z} \langle 2 e_1 , -e_2 - e_3 , e_4 + e_5 , e_2- e_3 , e_4 - e_5 \rangle),  \, \,  H =  {\mbox {Heis}}_5,  \, t_1 = \pi, \\[3pt]
\end{array}
$$
 where  ${\mbox {Heis}}_3$ and ${\mbox {Heis}}_5$ are   the real    Heisenberg group of dimension $3$ and $5$.
%%%Witte in \cite[Prop. 8.7]{Wi} shows that only countably many non-isomorphic simply-connected Lie groups admit a lattice.
So, it remains to study $\frg_2^\alpha$ with $\alpha>0$, and $\frg_9$.

We will show first that there exists a countable subfamily of $\frg_2^\alpha$ with $\alpha>0$ whose
corresponding Lie group $G_2^\alpha$ admits lattice. The $5$-dimensional factor $A_{5,17}^{\alpha,-\alpha,1}$ in the decomposable Lie algebra $\frg_2^\alpha=A_{5,17}^{\alpha,-\alpha,1}\oplus\R$ is given by
$$
[e_1,e_5]=-\alpha e_1+e_2,\;[e_2,e_5]=-e_1-\alpha e_2,\; [e_3,e_5]=\alpha e_3+e_4,\;[e_4,e_5]=-e_3+\alpha e_4,
$$
which is almost abelian since $A_{5,17}^{\alpha,-\alpha,1}=\R\ltimes_{\text{ad}_{e_5}}\R^4$. If we denote by  $B_\alpha$ the coordinate matrix of the derivation $\text{ad}_{e_5}\colon\R^4\to\R^4$ in the basis $\{e_1,e_2,e_3,e_4 \}$, then the coordinate matrix of $d_e(\mu(t))$ is the exponential %%%$e^{tB_\alpha}$:
\begin{equation*}
e^{tB_\alpha}=\left(\begin{array}{cccc}e^{\alpha t}\text{cos}(t)&e^{\alpha t}\text{sin}(t)&0&0\\-e^{\alpha t}\text{sin}(t)&e^{\alpha t}\text{cos}(t)&0&0\\0&0&e^{-\alpha t}\text{cos}(t)&e^{-\alpha t}\text{sin}(t)\\0&0&-e^{-\alpha t}\text{sin}(t)&e^{-\alpha t}\text{cos}(t)\end{array}\right).
\end{equation*}
If $t_l=l\pi$ with $l\in\mathbb{Z}$ and $l>0$, then the characteristic polynomial of the matrix $e^{t_lB_\alpha}$ is $p(\lambda)=(1-(-1)^l(e^{\alpha t_l}+e^{-\alpha t_l})\lambda+\lambda^2)^2$, which is integer if $\alpha_{l,m}=\frac{1}{l\pi}\text{log}(\frac{m+\sqrt{m^2-4}}{2})$ with $m\in\Z$ and $m>2$. Moreover, $e^{t_lB_\alpha}=P^{-1}C_{l,m}P$, where
$$
P^{-1}=\left(\begin{array}{cccc}0&0&\epsilon&\beta^+\\\epsilon&\beta^+&0&0\\0&0&-\epsilon&\beta^-\\-\epsilon&\beta^-&0&0\end{array}\right),\quad\quad C_{l,m}=\left(\begin{array}{cccc}0&-1&0&0\\1&m(-1)^l&0&0\\0&0&0&-1\\0&0&1&m(-1)^l\end{array}\right),
$$
with $\epsilon=\frac{1}{\sqrt{m^2-4}}$ and $\beta^\pm=\frac{m^2-4\pm(-1)^l\sqrt{m^2-4}}{2(m^2-4)}$. Taking the basis $X_1=\epsilon(e_2-e_4)$, $X_2=\beta^+e_2+\beta^-e_4$, $X_3=\epsilon(e_1+e_3)$ and $X_4=\beta^+e_1+\beta^-e_3$ of $\R^4$ and using Lemma ~\ref{lema_lattice} we have that $\Gamma'=l\pi\Z\ltimes_\mu \Z\langle X_1,\ldots,X_4\rangle$ is a lattice of the simply-connected Lie group associated to      $A_{5,17}^{\alpha,-\alpha,1}$ with $\alpha=\alpha_{l,m}$.
Hence, $\Gamma=\Gamma'\times\Z$ is a lattice in $G_2^{\alpha_{l,m}}$.

The Lie algebra $\frg_9$ can be seen as an almost nilpotent Lie algebra $\frg=\R\ltimes_{\text{ad}_{e_6}}\frh$,
where $\frh=\langle e_1,\ldots,e_5\, |\, [e_1,e_4]=-e_3,\, [e_1,e_5]=-e_2,\, [e_4,e_5]=-e_1\rangle$
is a $5$-dimensional nilpotent Lie algebra.
Proceeding in a similar manner as for $\frg_2^\alpha$, we compute the characteristic polynomial of $d_e(\mu(t))$ getting that $p(\lambda)=(\lambda^2-2\lambda\text{cos}(t)+1)^2$.
If $t_1=\pi$ then $p(\lambda)\in\mathbb{Z}[\lambda]$ and the coordinate matrix of $d_e(\mu(t_1))$ in the basis $X_1=\frac{\pi}{2}e_1$, $X_2=\sqrt{\frac{\pi}{2}}e_4$, $X_3=\sqrt{\frac{\pi}{2}}e_5$, $X_4=(\frac{\pi}{2})^{3/2}e_2+\sqrt{\frac{\pi}{2}}e_4$ and $X_5=-(\frac{\pi}{2})^{3/2}e_3+\sqrt{\frac{\pi}{2}}e_5$ of $\frh$ is
$$
C=\left(\begin{array}{ccccc}1&0&0&0&0\\0&-2&0&-1&0\\0&0&-2&0&-1\\0&1&0&0&0\\0&0&1&0&0\end{array}\right).
$$
Moreover, $\{X_1,\ldots,X_5\}$ is a rational basis of $\frh$ because $[X_1,X_2]=[X_1,X_4]=-X_3+X_5$, $[X_1,X_3]=[X_1,X_5]=X_2-X_4$, $[X_2,X_3]=[X_2,X_5]=-[X_3,X_4]=-X_1$.
Hence, if we denote by $H$ the simply-connected Lie group corresponding to $\frh$, then using Lemma~\ref{lema_lattice} we have that $\Gamma=\pi\mathbb{Z}\ltimes_\mu\text{exp}^H(\mathbb{Z}\langle X_1,\ldots,X_5\rangle)$ is a lattice
in the Lie group~$G_9$.
\end{proof}

\begin{remark}
{\rm
Bock found a lattice for the Lie group associated to $A_2^{\alpha,-\alpha,1}$ with $\alpha=\alpha_{1,3}=\frac{1}{\pi}\log\frac{3+\sqrt{5}}{2}$,
that is, for $l=1$ and $m=3$. Notice that our result for $k=2$ is consistent with the result obtained by Witte in \cite[Prop. 8.7]{Wi},
where it is showed that only countably many non-isomorphic simply-connected Lie groups admit a lattice,
so that one cannot expect a lattice to exist for any real $\alpha>0$.

The Lie algebra $\frg_9$ does not appear in \cite{Bock} because its nilradical is the $5$-dimensional Lie algebra $\frh$, which is isomorphic to $\frg_{5,3}$ (in the notation of \cite{Bock}), but there are only two solvable and unimodular Lie algebras with nilradical $\frg_{5,3}$
considered in that paper (namely $\frg_{6,76}^{-1}$ and $\frg_{6,78}$) which are both completely solvable, but $\frg_9$ is not.
}
\end{remark}

%%%%%%%%%%%%%%%%%%%%%%%%%%%%%%%%%%%%%%%%%%%%%%%%%%%%%%%%%%%%%%%%%%%%%%%%%
\section{Moduli of complex structures}\label{sec-reduc}
%%%%%%%%%%%%%%%%%%%%%%%%%%%%%%%%%%%%%%%%%%%%%%%%%%%%%%%%%%%%%%%%%%%%%%%%%

\noindent In this section we classify, up to equivalence, the complex structures having closed (3,0)-form on the Lie algebras of
Theorem~\ref{main-th}.
Recall that two complex structures $J$
and $J'$ on $\frg$ are said to be \emph{equivalent} if there exists an automorphism
$F\colon \frg\longrightarrow\frg$ such that $F\circ J=J'\circ F$.

\subsection{The decomposable case}

We consider firstly the $5\oplus1$ decomposable Lie algebras.

\begin{lemma}\label{1-forma-cerrada}
Let $J$ be any complex structure on $\frg_1$ or $\frg_2^{\alpha}$, $\alpha\geq 0$,
with closed volume $(3,0)$-form, then there is a non-zero closed $(1,0)$-form.
More concretely, the $(1,0)$-form $e^5-iJ^*e^5$ is closed.
\end{lemma}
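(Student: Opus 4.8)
The plan is to avoid describing $J$ explicitly and instead to exploit the extension structure of these two Lie algebras. I would write $P=\langle e^1,e^2,e^3,e^4\rangle$ and $U=\langle e^5,e^6\rangle$ inside $\frg^*$ (for $\frg=\frg_1$ or $\frg_2^{\alpha}$). By the structure equations in Theorem~\ref{main-th} one has $de^i\in P\wedge U$ for $i=1,\dots,4$ and $de^5=de^6=0$; hence, in the bigrading $\Lambda^k\frg^*_\C=\bigoplus_{p+q=k}\Lambda^pP_\C\wedge\Lambda^qU_\C$ (where $P_\C=P\otimes\C$, etc.), the differential $d$ keeps the $P$-degree fixed and raises the $U$-degree by one. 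Two elementary facts should be recorded first, both immediate from the structure equations: the space of closed complex $1$-forms of $\frg$ is exactly $U_\C=\langle e^5,e^6\rangle_\C$; and the induced map $d\colon\Lambda^3P_\C\to\Lambda^3P_\C\wedge U_\C$ is injective — this follows by displaying $d(e^{123}),d(e^{124}),d(e^{134}),d(e^{234})$ and checking that these four $4$-forms are linearly independent, a short computation in each case (using $1+\alpha^2\neq0$ for $\frg_2^{\alpha}$).

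Next I would let $\Psi$ be the given closed $(3,0)$-form and, since $\Lambda^{3,0}$ is one-dimensional over $\C$, write $\Psi=\omega^1\wedge\omega^2\wedge\omega^3$ for some basis $\{\omega^1,\omega^2,\omega^3\}$ of $(1,0)$-forms. Splitting $\omega^i=\omega^i_P+\omega^i_U$ along $\frg^*_\C=P_\C\oplus U_\C$, the $\Lambda^3P_\C$-component of $\Psi$ is $\omega^1_P\wedge\omega^2_P\wedge\omega^3_P$, and since $d$ raises the $U$-degree the $\Lambda^3P_\C\wedge U_\C$-component of $d\Psi$ is precisely $d(\omega^1_P\wedge\omega^2_P\wedge\omega^3_P)$. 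From $d\Psi=0$ and the injectivity above one gets $\omega^1_P\wedge\omega^2_P\wedge\omega^3_P=0$, i.e. $\omega^1_P,\omega^2_P,\omega^3_P$ are linearly dependent in $P_\C$. Choosing $(c_1,c_2,c_3)\neq(0,0,0)$ with $\sum_i c_i\omega^i_P=0$ then yields a non-zero $(1,0)$-form $\xi:=\sum_i c_i\omega^i=\sum_i c_i\omega^i_U$ lying in $U_\C$, hence closed because $de^5=de^6=0$.

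Finally I would upgrade $\xi$ to the form stated in the lemma. Writing $\xi=ae^5+be^6$ with $(a,b)\neq(0,0)$, the $(1,0)$ condition says $J^*\xi=i\xi$, while $\bar\xi=\bar a e^5+\bar b e^6$ satisfies $J^*\bar\xi=-i\bar\xi$; since $\xi$ and $\bar\xi$ proportional would force $i\xi=-i\xi$, the pair $\{\xi,\bar\xi\}$ is a basis of $\langle e^5,e^6\rangle_\C$, so $J^*$ stabilizes $\langle e^5,e^6\rangle_\C$ and, being real, stabilizes $\langle e^5,e^6\rangle$. In particular $J^*e^5\in\langle e^5,e^6\rangle$ is closed, and therefore $e^5-iJ^*e^5$ — automatically of type $(1,0)$ and non-zero since its real part is $e^5$ — is the desired closed $(1,0)$-form.

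The main obstacle here is conceptual rather than computational: one must realize that, instead of parametrizing $\rho\in Z^3(\frg)$ and computing $J_\rho^*$ explicitly as in Examples~\ref{ex1}–\ref{ex2}, it suffices to locate a closed $(1,0)$-form abstractly via the $P/U$ bigrading. Once the grading remark is in place the two linear-algebra checks of the first paragraph are routine, and the only delicate point is the degenerate case $\xi\parallel\bar\xi$, which the eigenvalue argument disposes of.
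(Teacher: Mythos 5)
Your argument is correct, but it takes a genuinely different route from the paper's. The paper proves this lemma computationally: it writes down the general closed $3$-form $\rho\in Z^3(\frg)$ for $\frg_1$ (and likewise for $\frg_2^{\alpha}$), computes $J_\rho^*e^5$ and $J_\rho^*e^6$ explicitly from the stable-form formula \eqref{def-J}, and reads off that the span $\langle e^5,e^6\rangle$ is $J_\rho^*$-invariant, whence $e^5-iJ_\rho^*e^5$ is closed because $de^5=de^6=0$. You reach the same invariance statement abstractly: the splitting $P=\langle e^1,\ldots,e^4\rangle$, $U=\langle e^5,e^6\rangle$ gives a bigrading in which $d$ fixes the $P$-degree and raises the $U$-degree by one, and $d$ is injective on $\Lambda^3P_\C$ (your two checks do hold; for $\frg_2^{\alpha}$ the relevant $2\times 2$ determinants are $1+\alpha^2$, and for $\frg_1$ the four images are distinct basis $4$-forms). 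Closedness of $\Psi=\omega^1\wedge\omega^2\wedge\omega^3$ then forces $\omega^1_P,\omega^2_P,\omega^3_P$ to be dependent — note that only the $(3,0)_{P,U}$-component of $\Psi$ can contribute to the $\Lambda^3P_\C\wedge U_\C$-component of $d\Psi$, since the $(1,2)$-component dies in $\Lambda^3U=0$ — so you obtain a non-zero closed $(1,0)$-form $\xi\in U_\C$, and the eigenvalue argument ($J^*\xi=i\xi$, $J^*\bar\xi=-i\bar\xi$, $\xi\not\parallel\bar\xi$) shows that $J^*$ preserves $U_\C$ and, being real, preserves $U$, so $J^*e^5$ is closed and $e^5-iJ^*e^5$ is a closed $(1,0)$-form. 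What each approach buys: yours is conceptual and computer-free, isolating the structural reason (the $P/U$ grading plus injectivity of $d$ on $\Lambda^3P$) why the invariance of $\langle e^5,e^6\rangle$ is automatic, and it would transfer to other algebras with the same extension structure; the paper's computation instead produces the explicit expressions of $J_\rho^*e^5$ and $J_\rho^*e^6$ in the coefficients of $\rho$, in line with the uniform case-by-case method it uses throughout Sections~\ref{sec-preli} and~\ref{sec-reduc}. Both are complete proofs of the statement.
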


\begin{proof}
Let us consider first $\frg=\frg_1$ with structure equations given as in Theorem~\ref{main-th}. Any $\rho\in Z^3(\frg)$ is given by
$$
\begin{array}{rl}
\rho &= a_1e^{125}+a_2e^{126}+a_3e^{135}+a_4e^{136}+a_6e^{156}+a_8e^{245}+a_9e^{246}\\[3pt]
 &\ \ +\,a_{10}e^{256}+a_{11}e^{345} + a_{12}e^{346}+a_{13}e^{356}+a_{14}e^{456},
\end{array}
$$
where $a_i\in\R$. We use the equation (\ref{def-J}) to compute the space of almost complex structures corresponding to $\rho\in Z^3(\frg)$.
When we compute the images of $e^5,e^6$ by $J_\rho^*$ we find that the subspace spanned by $e^5,e^6$ is $J_\rho^*$-invariant, because
$$
\begin{array}{rl}
J_{\rho}^* e^5 &= \frac{1}{\sqrt{|\lambda(\rho)|}}((a_1 a_{12}+a_{11} a_2-a_4 a_8-a_3 a_9)e^5+2 (a_{12} a_2-a_4 a_9)e^6),\\[9pt]
J_{\rho}^* e^6 &= \frac{1}{\sqrt{|\lambda(\rho)|}}(-2(a_1 a_{11}-a_3 a_8)e^5-(a_1 a_{12}+a_{11} a_2-a_4 a_8-a_3 a_9)e^6).
\end{array}
$$
Therefore, the (1,0)-form $\eta=e^5-iJ_\rho^*e^5$ is closed for every $\rho\in Z^3(\frg)$.

The same situation appears for $\frg=\frg_2^{\alpha}$, $\alpha\geq 0$, because again the subspace spanned by $e^5,e^6$
is found to be $J_\rho^*$-invariant for all $\rho\in Z^3(\frg)$.
\end{proof}

\begin{lemma}\label{reduc-prod}
Let $J$ be any complex structure on $\frg_1$ or $\frg_2^{\alpha}$, $\alpha\geq 0$,
with closed volume $(3,0)$-form.
Then, there is a $(1,0)$-basis $\{\omega^1,\omega^2,\omega^3\}$ satisfying the following reduced equations
\begin{equation}\label{equations2-1}
\left\{
\begin{array}{l}
d\omega^1=A\, \omega^{1}\wedge (\omega^{3}+\omega^{\bar{3}}),\\[2pt]
d\omega^2=-A\, \omega^{2}\wedge (\omega^{3}+\omega^{\bar{3}}),\\[2pt]
d\omega^3=0,
\end{array}
\right.
\end{equation}
where $A=\cos \theta+i\sin \theta$, $\theta\in[0,\pi)$.
\end{lemma}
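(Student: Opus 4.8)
The starting point is Lemma~\ref{1-forma-cerrada}, which produces the closed $(1,0)$-form $\eta=e^5-iJ^*e^5$ on $\frg_1$ (and on $\frg_2^\alpha$). I would set $\omega^3=\frac12\eta$, so that $d\omega^3=0$ and the span of $e^5$ and $J^*e^5$ is the $J^*$-invariant plane appearing in the computation of $J_\rho^*$; dually, $\frh=\ker e^5\cap\ker J^*e^5$ is a $J$-invariant ideal of dimension $4$. The next step is to identify $\frh$: since $\frg_1=A_{5,7}^{-1,-1,1}\oplus\R$ and the only non-closed generators are $e^1,e^2,e^3,e^4$ with $de^i=\pm e^{i5}$, the derivation $\mathrm{ad}_{e_5}$ restricted to $\langle e_1,e_2,e_3,e_4\rangle$ has eigenvalues $1,-1,-1,1$; the complex structure $J$ on $\frh$ must preserve this splitting into $\pm1$ eigenspaces (or, for $\frg_2^\alpha$, the corresponding generalized eigenspaces), so $J$ pairs a $+1$ direction with a $-1$ direction. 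I would then choose $\omega^1,\omega^2$ to be a $(1,0)$-coframe for $J|_\frh$ adapted to this pairing, i.e.\ $\omega^1$ spanning the part where $\mathrm{ad}_{e_5}$ acts (after complexification) with eigenvalue of one sign and $\omega^2$ with the opposite sign.

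\textbf{Key steps.} Concretely I would: (i) write the generic closed $\rho$ and the resulting $J_\rho^*$ as in Lemma~\ref{1-forma-cerrada}, extract the $J_\rho^*$-invariant $4$-plane $\frh^*$, and restrict the structure equations there; (ii) diagonalize $d$ on $\frh^*$ relative to $\omega^3+\omega^{\bar3}$ (which is a real multiple of $e^5$), obtaining $d\omega^1=\omega^1\wedge\beta_1$, $d\omega^2=\omega^2\wedge\beta_2$ for suitable real linear combinations $\beta_1,\beta_2$ of $\omega^3+\omega^{\bar3}$; (iii) use the Jacobi identity / the closedness $d^2=0$ together with unimodularity of $\frg$ to force $\beta_2=-\beta_1$, so $d\omega^1=A\,\omega^1\wedge(\omega^3+\omega^{\bar3})$ and $d\omega^2=-A\,\omega^2\wedge(\omega^3+\omega^{\bar3})$ for a single complex constant $A$; (iv) absorb the modulus of $A$ by rescaling $\omega^3$ (this changes $\omega^3+\omega^{\bar3}$ by a positive real factor, which is allowed since it only rescales the real $1$-form $e^5$) and absorb a possible sign/conjugation by swapping $\omega^1\leftrightarrow\omega^2$ and/or replacing $\omega^3$ by $-\omega^3$; this normalizes $A=\cos\theta+i\sin\theta$ with $\theta\in[0,\pi)$. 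One must also check that after these normalizations $\{\omega^1,\omega^2,\omega^3\}$ is still a genuine $(1,0)$-basis (linear independence over $\C$ and $J$-type), which is automatic since the transformations used are complex-linear automorphisms of $\frg^*$ preserving bidegree.

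\textbf{Main obstacle.} The delicate point is step (iii)--(iv): showing that the two eigenvalue-type coefficients are exactly opposite and that the phase can be brought into $[0,\pi)$ rather than $[0,2\pi)$. The oppositeness is where unimodularity of $\frg$ (equivalently, trace-zero of the relevant derivation) enters decisively, and one must track carefully that the integrability of $J$ — i.e.\ that $\rho$ with $\lambda(\rho)<0$ and $d(J_\rho^*\rho)=0$ actually exists — does not impose further constraints beyond what is needed; here the explicit description of $Z^3(\frg)$ and of $J_\rho^*$ from the proof of Lemma~\ref{1-forma-cerrada} is essential. Reducing $\theta$ to $[0,\pi)$ uses the freedom $\omega^3\mapsto-\omega^3$ (which sends $A\mapsto -A$? no: it fixes $\omega^3+\omega^{\bar3}$ up to sign while $A$ is unchanged) combined with conjugation $\omega^1\leftrightarrow\omega^{\bar1}$ is not available; instead $\theta$ and $\theta+\pi$ correspond to swapping the roles of $\omega^1$ and $\omega^2$, which replaces $A$ by $-A$, hence $\theta\mapsto\theta+\pi\pmod{2\pi}$, giving the stated range. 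I would verify this last symmetry explicitly, as it is exactly the non-uniqueness that the normalization $\theta\in[0,\pi)$ is designed to remove. The case $\frg_2^\alpha$ runs in parallel, the only change being that $\mathrm{ad}_{e_5}$ is not diagonalizable over $\R$ but its complexification still splits into two conjugate-paired lines on which $d$ acts by the same kind of formula, so the identical normalization applies.
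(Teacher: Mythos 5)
Your overall route is the paper's route (use Lemma~\ref{1-forma-cerrada} to get the closed form $\eta^3=e^5-iJ^*e^5$, write the generic structure equations, which are all of the form $(\hbox{1-form})\wedge e^5$, and then reduce), but the two steps you single out as decisive are justified by the wrong tools, and as stated they fail. First, step (iii): neither the Jacobi identity nor unimodularity forces $\beta_2=-\beta_1$. In the triangular situation $d\omega^1=c_1\,\omega^1\wedge(\omega^3+\omega^{\bar 3})$, $d\omega^2=c_2\,\omega^2\wedge(\omega^3+\omega^{\bar 3})$, $d\omega^3=0$, the condition $d^2=0$ is automatic for every $c_1,c_2$, and unimodularity only says that the real trace of the relevant derivation vanishes, i.e.\ $\Real(c_1+c_2)=0$. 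The full complex relation $c_2=-c_1$ comes from the hypothesis you set aside as a side check, namely the closedness of the $(3,0)$-form: $d(\omega^{123})=(c_1+c_2)\,\omega^{123}\wedge\omega^{\bar 3}$, so $d\omega^{123}=0$ is exactly $c_1+c_2=0$ (this is the paper's step ``$d(\eta^{123})=0$ forces $D=-A$''). A concrete counterexample to your mechanism: $c_1=1+i$, $c_2=-1+i$ defines an integrable complex structure on the unimodular algebra $\frg_2^{1}$ satisfying Jacobi, yet it is not of the reduced form \eqref{equations2-1} and admits no closed $(3,0)$-form; so without invoking the closed $(3,0)$-form your argument cannot conclude.

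Second, the structural claim in your plan — that $J$ ``must preserve the $\pm1$-eigenspace splitting of $\mathrm{ad}_{e_5}$, so $J$ pairs a $+1$ direction with a $-1$ direction'' — is unjustified (a priori $J$ need not commute with $\mathrm{ad}_{e_5}$) and internally inconsistent; in fact for the unique $J$ on $\frg_1$ one has $\omega^1=e^1+ie^4$, so $J$ maps the $+1$-eigenspace to itself rather than pairing the two eigenspaces. Equivalently, your step (ii) assumes one can ``diagonalize $d$ on $\frh^*$'' in a $(1,0)$-coframe, i.e.\ that the $2\times 2$ matrix with rows $(A,B)$ and $(C,-A)$ appearing in the generic equations is diagonalizable; the degenerate case $A^2+BC=0$ (repeated eigenvalue $0$, possibly a nilpotent block) must be excluded, and this is precisely where the paper uses the specific structure of $\frg_1$, $\frg_2^{\alpha}$: if $A^2+BC=0$ then $d\tau^1$ would be a multiple of $e^{56}$, impossible since $\operatorname{im}d$ is spanned by $e^{15},\dots,e^{45}$ and there is no closed $(1,0)$-form independent of $\eta^3$. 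Your outline also ignores the $\omega^{3}\wedge(\omega^{3}+\omega^{\bar 3})$ terms (the paper's $E,F$), which an ``adapted'' coframe does not automatically kill and which are absorbed by the explicit change $\tau^1=A\eta^1+B\eta^2+E\eta^3$, $\tau^2=C\eta^1-A\eta^2+F\eta^3$. Minor points: the coefficients multiplying $\omega^3+\omega^{\bar 3}$ are complex, not real; and the final normalization $\theta\in[0,\pi)$ can be achieved either by $\omega^3\mapsto-\omega^3$ (which does send $A\mapsto-A$, contrary to your parenthetical) or by swapping $\omega^1\leftrightarrow\omega^2$ as you say — that part is fine once $|A|=1$ has been arranged by rescaling $\omega^3$.
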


\begin{proof}
By Lemma~\ref{1-forma-cerrada} we can consider a basis
of $(1,0)$-forms $\{\eta^1,\eta^2,\eta^3\}$ such that $\eta^3=e^5-iJ^*e^5$ is closed.
The structure equations of $\frg_1$ and $\frg_2^{\alpha}$ with $\alpha\geq 0$
force the differential of any 1-form to be a multiple of $e^5=\frac12(\eta^3+\eta^{\bar{3}})$,
so there exist $A,B,C,D,E,F\in \mathbb{C}$ such that
$$
\left\{
\begin{array}{l}
d\eta^1=(A\, \eta^{1}+B\, \eta^{2}+E\, \eta^{3})\wedge (\eta^{3}+\eta^{\bar{3}}),\\[7pt]
d\eta^2=(C\, \eta^{1}+D\, \eta^{2}+F\, \eta^{3})\wedge (\eta^{3}+\eta^{\bar{3}}),\\[7pt]
d\eta^3=0.
\end{array}
\right.
$$
Moreover, since $d(\eta^{123})=0$ necessarily $D=-A$.

Let us consider the non-zero 1-form $\tau^1=A\, \eta^{1}+B\, \eta^{2}+E\, \eta^{3}$.
Notice that
$$
d\tau^1=((A^2+BC)\eta^1+(AE+BF)\eta^3)\wedge (\eta^{3}+\eta^{\bar{3}}),$$
which implies that
$A^2+BC\not=0$ because otherwise $d\tau^1$ would be a multiple of $e^{56}$.
Then, with respect to the new (1,0)-basis $\{\tau^1,\tau^2,\tau^3\}$ given by
$$
\tau^1=A\, \eta^{1}+B\, \eta^{2}+E\, \eta^{3},\quad \tau^1=C\, \eta^{1}-A\, \eta^{2}+F\, \eta^{3},\quad \tau^3=\eta^{3},
$$
the complex structure equations are
\begin{equation}\label{redu}
\left\{
\begin{array}{l}
d\tau^1=(A\, \tau^{1}+B\, \tau^{2})\wedge (\tau^{3}+\tau^{\bar{3}}),\\[6pt]
d\tau^2=(C\, \tau^{1}-A\, \tau^{2})\wedge (\tau^{3}+\tau^{\bar{3}}),\\[6pt]
d\tau^3=0.
\end{array}
\right.
\end{equation}
Now we distinguish two cases:\\
$\bullet$ If $B\not=0$ then we consider the new basis $\{\omega^1, \omega^2, \omega^3\}$ given by
$$
\begin{array}{rl}
&\omega^1=\left(A+\sqrt{A^2+BC}\right)\tau^1+B\,\tau^2,\quad\  \omega^2=\left(A-\sqrt{A^2+BC}\right)\tau^1+B\,\tau^2,\\[8pt]
&\omega^3=\left|\sqrt{A^2+BC}\right| \tau^3.
\end{array}
$$
With respect to this basis, the equations \eqref{redu} reduce to
$$
d\omega^1=\frac{\sqrt{A^2+BC}}{\left|\sqrt{A^2+BC}\right|}\, \omega^{1}\wedge (\omega^{3}+\omega^{\bar{3}}),\ \
d\omega^2=-\frac{\sqrt{A^2+BC}}{\left|\sqrt{A^2+BC}\right|}\, \omega^{2}\wedge (\omega^{3}+\omega^{\bar{3}}),\ \
d\omega^3=0,
$$
that is, the equations are of the form \eqref{equations2-1} where the new complex coefficient has modulus equal to 1.\\
%%%
$\bullet$ If $C\not=0$ then with respect to the basis $\{\omega^1, \omega^2, \omega^3\}$ given by
$$
\begin{array}{rl}
&\omega^1=C\,\tau^1-\left(A-\sqrt{A^2+BC}\right)\tau^2,\quad\  \omega^2=C\,\tau^1-\left(A+\sqrt{A^2+BC}\right)\tau^2,\\[8pt]
&\omega^3=\left|\sqrt{A^2+BC}\right| \tau^3,
\end{array}
$$
the equations \eqref{redu} again reduce to equations of the form \eqref{equations2-1} where the new complex coefficient has modulus 1.

Finally, notice that in the equations \eqref{equations2-1} one can change the sign of $A$ by changing the
sign of $\omega^3$, so we can suppose that $A=\cos \theta+i\sin \theta$ with angle $\theta\in[0,\pi)$.
\end{proof}

\begin{proposition}\label{complex-moduli-g1-g2}
Up to isomorphism, there is only one complex structure with closed $(3,0)$-form on the Lie algebras $\frg_1$ and $\frg_2^{0}$,
whereas $\frg_2^{\alpha}$ has two such complex structures for any $\alpha>0$. More concretely, the complex
structures are:
\begin{align}
&(\frg_1,J) \colon \
d\omega^1=\omega^{1}\wedge (\omega^{3}+\omega^{\bar{3}}),\ \
d\omega^2=-\omega^{2}\wedge (\omega^{3}+\omega^{\bar{3}}),\ \
d\omega^3=0;\label{equations2-1-g1}\\[5pt]
&(\frg_2^0,J) \colon \
d\omega^1=i\,\omega^{1}\wedge (\omega^{3}+\omega^{\bar{3}}),\ \
d\omega^2=-i\,\omega^{2}\wedge (\omega^{3}+\omega^{\bar{3}}),\ \
d\omega^3=0;\label{equations2-1-g2-0}\\[5pt]
&(\frg_2^{\alpha=\frac{\cos \theta}{\sin \theta}},J_{\pm}) \colon \!
\left\{
\begin{array}{l}
d\omega^1= (\pm\cos \theta+i\sin \theta)\, \omega^{1}\wedge (\omega^{3}+\omega^{\bar{3}}),\\[2pt]
d\omega^2=- (\pm\cos \theta+i\sin \theta)\, \omega^{2}\wedge (\omega^{3}+\omega^{\bar{3}}),\\[2pt]
d\omega^3=0,
\end{array}
\right.\label{equations2-1-g2-alpha}\\[5pt]
&\hskip .2cm \mbox{where $\theta\in \left(0,\pi/2\right)$.}\nonumber
\end{align}
\end{proposition}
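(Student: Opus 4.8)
The plan is to start from the reduced equations \eqref{equations2-1} provided by Lemma~\ref{reduc-prod}, and for each fixed value of the parameter $A = \cos\theta + i\sin\theta$, $\theta \in [0,\pi)$, work out which of these complex structures lie on $\frg_1$ and which lie on $\frg_2^\alpha$, and then decide when two of them are equivalent. First I would observe that a complex structure with equations \eqref{equations2-1} determines its real Lie algebra, so one should identify, for each $\theta$, whether the underlying real Lie algebra is $\frg_1$ (the case $\alpha=0$, i.e. the eigenvalues of $\mathrm{ad}$ on the four-dimensional piece are real) or $\frg_2^\alpha$ with $\alpha>0$. Writing $\omega^j = x^{2j-1} + i x^{2j}$ and unwinding \eqref{equations2-1} in a real basis, the operator $\mathrm{ad}_{X}$, where $X$ is dual to the real part of $\omega^3$, acts on $\mathrm{span}(\omega^1,\omega^{\bar 1})$ with eigenvalues $\pm A$ and on $\mathrm{span}(\omega^2,\omega^{\bar 2})$ with eigenvalues $\mp A$. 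Thus the real $4\times 4$ block has eigenvalues $\{A, -A, -\bar A, \bar A\}$ scaled appropriately: when $\theta = 0$ these are real and one recognizes $A_{5,7}^{-1,-1,1}\oplus\R = \frg_1$; when $\theta \ne 0$ they are genuinely complex, $\pm(\cos\theta \pm i\sin\theta)$, and after rescaling the generator one matches $A_{5,17}^{\alpha,-\alpha,1}\oplus\R = \frg_2^\alpha$ with $\alpha = \cos\theta/\sin\theta = \cot\theta$. This accounts for $\theta\in[0,\pi/2)$ giving $\alpha \ge 0$; the value $\theta = \pi/2$ gives $A = i$, i.e.\ $\alpha = 0$ again, which is the structure \eqref{equations2-1-g2-0} on $\frg_2^0$, and $\theta \in (\pi/2,\pi)$ gives $\alpha = \cot\theta < 0$, which corresponds to $A_{5,17}^{\alpha,-\alpha,1}$ with negative $\alpha$; but $A_{5,17}^{\alpha,-\alpha,1}\cong A_{5,17}^{-\alpha,\alpha,1}$ via a permutation of the blocks, so this is again $\frg_2^{|\alpha|}$ and the corresponding structure on $\frg_2^\alpha$ ($\alpha>0$) is the one I would label $J_-$.

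Next I would fix the real Lie algebra and classify the complex structures on it up to automorphism. For $\frg_1$ (that is $\theta = 0$, $A=1$) there is a single structure \eqref{equations2-1-g1} and nothing to prove beyond exhibiting it. For $\frg_2^0$ there are a priori two candidates, $\theta=0$ giving real eigenvalues (but that is $\frg_1$, not $\frg_2^0$) and $\theta=\pi/2$ giving $A=i$; so on $\frg_2^0$ only \eqref{equations2-1-g2-0} survives, and again uniqueness is immediate. For $\frg_2^\alpha$ with $\alpha>0$ fixed, the two values of $\theta$ in $[0,\pi)$ producing this $\alpha$ via $|\cot\theta| = \alpha$ are $\theta_0 = \mathrm{arccot}(\alpha)\in(0,\pi/2)$ and $\pi - \theta_0 \in (\pi/2,\pi)$, yielding $A = \cos\theta_0 + i\sin\theta_0$ and $A = -\cos\theta_0 + i\sin\theta_0$ respectively; these are the structures $J_+$ and $J_-$ of \eqref{equations2-1-g2-alpha}. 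It remains to show (i) that $J_+$ and $J_-$ are \emph{not} equivalent on $\frg_2^\alpha$ for $\alpha>0$, and (ii) that the residual freedom $\theta \mapsto \pi - \theta$ together with sign changes of $\omega^3$ has already been exhausted, so there are exactly these two.

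For the non-equivalence in (i) I would compute an automorphism-invariant that distinguishes $J_+$ from $J_-$. A natural choice is to look at how $J$ interacts with the derived algebra or the nilradical: both structures share the same closed $(1,0)$-form $\omega^3$ (up to scale), and the obstruction to further reduction is encoded in the coefficient $A$, which is intrinsic up to the replacements $A \mapsto -A$ (sign of $\omega^3$) and $A \mapsto \bar A$ (complex conjugation / orientation); since $|A| = 1$ here, the invariant is essentially $\cos\theta \bmod \{\pm\}$, equivalently $\cos^2\theta$, together with the sign of $\mathrm{Re}(A)\,\mathrm{Im}(A)$ relative to a fixed orientation of the fixed real Lie algebra $\frg_2^\alpha$ — and it is precisely this sign that separates $J_+$ from $J_-$. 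Concretely I would argue that any automorphism $F$ of $\frg_2^\alpha$ preserves the ideal on which $\mathrm{ad}$ has eigenvalue $\cos\theta_0 + i\sin\theta_0$ versus $-\cos\theta_0 + i\sin\theta_0$ (these are distinguished because $\alpha \ne 0$ means the real parts are nonzero and of definite, opposite signs on the two $2$-planes), hence cannot intertwine $J_+$ with $J_-$; equivalently, one checks that $F$ must act block-diagonally or block-swapping on the four-dimensional part, and in either case the induced action on the parameter sends $A$ to $A$ or to $-\bar A$, never to $\bar A$, so $\theta_0$ and $\pi - \theta_0$ stay in separate orbits.

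\smallskip

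\textbf{Main obstacle.} The delicate point is step (i): pinning down a genuinely automorphism-invariant quantity that certifies $J_+ \not\cong J_-$, since the reduced equations \eqref{equations2-1} are symmetric enough that several ``obvious'' invariants coincide for the two structures. The cleanest route is probably to compute the full automorphism group of the real Lie algebra $\frg_2^\alpha$ (for $\alpha > 0$ it is small — essentially scalings on each block, block swaps, and the flow of the solvable generator), transport each such automorphism through the change of $(1,0)$-basis used in Lemma~\ref{reduc-prod}, and read off its effect on the pair $(\theta, \mathrm{sign})$; one then verifies the orbit of $\theta_0 \in (0,\pi/2)$ under this finite group does not contain $\pi - \theta_0$. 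Everything else — matching the real Lie algebras, the $\theta = \pi/2$ degeneration to $\frg_2^0$, and the uniqueness statements for $\frg_1$ and $\frg_2^0$ — is routine bookkeeping with the structure equations.
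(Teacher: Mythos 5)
Your overall route is the same as the paper's: starting from the normal form \eqref{equations2-1} of Lemma~\ref{reduc-prod}, you identify the underlying real Lie algebra for each $\theta\in[0,\pi)$ (the paper does this by writing an explicit real basis; $\theta=0$ gives $\frg_1$, $\theta=\pi/2$ gives $\frg_2^0$, and $\theta$, $\pi-\theta$ give two structures on the same $\frg_2^{\alpha}$, $\alpha=|\!\cot\theta|$). That bookkeeping is fine. The only step that requires genuine work --- and the only one the paper compresses into ``by a standard argument'' --- is the non-equivalence of $J_+$ and $J_-$ on $\frg_2^{\alpha}$, $\alpha>0$, and it is exactly there that your argument fails as written. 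You claim the automorphism-induced action on the parameter ``sends $A$ to $A$ or to $-\bar A$, never to $\bar A$, so $\theta_0$ and $\pi-\theta_0$ stay in separate orbits.'' But with $A=e^{i\theta_0}$ one has $-\bar A=e^{i(\pi-\theta_0)}$, so $A\mapsto -\bar A$ \emph{is} the move $\theta_0\mapsto\pi-\theta_0$; moreover $\bar A$ and $-\bar A$ describe the same complex structure (replace $\omega^3$ by $-\omega^3$), so admitting $-\bar A$ while excluding $\bar A$ is not meaningful. Taken literally, your claim would prove $J_+\cong J_-$, the opposite of what is needed. Likewise, your preliminary assertion that every automorphism must preserve each of the two $2$-planes (because the real parts of the $\mathrm{ad}$-eigenvalues have opposite signs) is not justified --- a block-swapping automorphism scaling $\mathrm{ad}_{e_5}$ by $-1$ is not excluded a priori --- and even granting block preservation it would not by itself preclude intertwining, since both $J_+$ and $J_-$ preserve both blocks.

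The correct statement is that the induced action sends $A$ only to $\pm A$ (the same $J$), never to $\pm\bar A$, and it can be proved along the lines you sketch in your ``main obstacle'' paragraph: any automorphism $F$ of $\frg_2^{\alpha}$ preserves the derived algebra $V=\langle e_1,\dots,e_4\rangle$ and the center $\langle e_6\rangle$, hence $F\circ\mathrm{ad}_{e_5}|_V=a\,\mathrm{ad}_{e_5}|_V\circ F|_V$ for some real $a$; comparing the spectrum $\{\pm\alpha\pm i\}$ with $a\{\pm\alpha\pm i\}$ forces $a=\pm1$. For the structures in the form \eqref{equations2-1-g2-alpha}, the $+i$-eigenspace of $J_\pm$ inside $V\otimes\C$ is spanned by $\mathrm{ad}_{e_5}$-eigenvectors whose eigenvalues form the pair $\{\lambda,-\lambda\}$ for one structure and $\{\bar\lambda,-\bar\lambda\}$ for the other, where $\lambda=\alpha+i$. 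Since $F$ maps the $\mu$-eigenspace to the $a\mu$-eigenspace with $a=\pm1$, and $\{\lambda,-\lambda\}$ is stable under $\mu\mapsto\pm\mu$ but disjoint from $\{\bar\lambda,-\bar\lambda\}$ (here $\alpha>0$ and $\Imag\lambda\neq0$ are both used), $F$ carries the $(1,0)$-part of $J_+$ in $V\otimes\C$ back to itself and never onto that of $J_-$; hence no automorphism intertwines them. With this replacement for step (i), your proposal becomes a complete proof; the remaining parts (uniqueness for $\frg_1$ and $\frg_2^0$, the $\theta=\pi/2$ degeneration) coincide with the paper's argument.
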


\begin{proof}
A real Lie algebra underlying the equations~\eqref{equations2-1} is isomorphic to $\frg_1$ or
$\frg_2^{\alpha}$ for some $\alpha\geq 0$. In fact, in terms of the real basis
$\beta^1,\ldots,\beta^6$ given by $\omega^1=\beta^1+i\beta^2$, $\omega^2=\beta^3+i\beta^4$
and $\omega^3=\frac12(\beta^5+i\beta^6)$, we have
$$
\begin{array}{lll}
d\beta^1=\cos \theta\, \beta^{15}-\sin \theta\, \beta^{25},\quad & d\beta^3=-\cos \theta\, \beta^{35}+\sin \theta\, \beta^{45},
\quad & d\beta^5=0,\\[5pt]
d\beta^2=\sin \theta\, \beta^{15}+\cos \theta\, \beta^{25},\quad & d\beta^4=-\sin \theta\, \beta^{35}-\cos \theta\, \beta^{45},
\quad & d\beta^6=0.
\end{array}
$$
In particular:\\
$\bullet$ If $\theta=0$ then taking $e^1=\beta^1$, $e^2=\beta^4$, $e^3=\beta^3$, $e^4=\beta^2$, $e^5=\beta^5$
and $e^6=\beta^6$ the resulting structure equations are precisely those of the Lie algebra $\frg_1$.\\
$\bullet$ If $\theta\in (0,\pi)$ then $\sin\theta\not=0$ and taking $e^1=\beta^1$, $e^2=-\beta^2$, $e^3=\beta^3$,
$e^4=\beta^4$, $e^5=\sin\theta\, \beta^5$
and $e^6=\beta^6$ we get the structure equations of $\frg_2^{|\alpha|}$ with $\alpha=-\frac{\cos \theta}{\sin \theta}$.
Notice that $\alpha$ takes any real value when $\theta$ varies in $(0,\pi)$, and if $\theta\not=\frac{\pi}{2}$
then $\theta$ and $\pi-\theta$ correspond to two complex structures on the same Lie algebra.
By a standard argument one can prove that these two complex structures are non-equivalent.
\end{proof}

\bigskip

%%% The Lie algebra $\frg_3=\fre(2)\oplus\fre(1,1)$

Let us consider now the $3\oplus3$ decomposable Lie algebra $\frg_3$. With respect to the structure equations given in Theorem~\ref{main-th},
any closed 3-form $\rho\in Z^3(\frg_3)$ is given by
$$
\begin{array}{rl}
\rho\!\! &=\, a_1\,e^{123}+a_2\,e^{124}+a_3\,e^{134}+a_4\,e^{145}+a_5\,e^{146}+a_6\,e^{156} +a_7\,e^{234}\\[3pt]
 &\ \ +\,a_8(e^{136}-e^{245})+a_9(e^{135}-e^{246})+a_{10}(e^{126}+e^{345})\\[3pt]
 &\ \ +\,a_{11}(e^{125}+e^{346})+a_{12}\,e^{456},
\end{array}
$$
where $a_1,\ldots,a_{12} \in \mathbb{R}$.
By imposing the closedness of $J_\rho^* \rho$ together with the condition tr$(J_\rho^{*2})<0$,
one arrives by a long computation to an explicit description of the complex structure $J_{\rho}$, which
allows us to prove that $\{e^1,e^2,e^3,J_{\rho}^* e^1,J_{\rho}^* e^2,J_{\rho}^* e^3\}$
are always linearly independent. Therefore, the forms
$$
\omega^1=e^1-i J_{\rho}^* e^1,\quad \omega^2=e^2-i J_{\rho}^* e^2,\quad \omega^3=e^3-i J_{\rho}^* e^3,
$$
constitute a (1,0)-basis for the complex structure $J_{\rho}$.
Moreover, one can show that with respect to this basis
the complex structure equations have the form
\begin{equation}\label{equations_e2+e11}
\left\{
\begin{array}{l}
d\omega^1=0,\\[4pt]
d\omega^2=-\frac12 \omega^{13} +b\, \omega^{1\bar{1}}+fi\,\omega^{1\bar{2}}-fi\,\omega^{2\bar{1}}-(\frac12+gi)\omega^{1\bar{3}}+gi\,\omega^{3\bar{1}},\\[5pt]
d\omega^3=\frac12 \omega^{12} +c\, \omega^{1\bar{1}}+(\frac12+hi)\omega^{1\bar{2}}
-hi\,\omega^{2\bar{1}}-fi\,\omega^{1\bar{3}}+fi\,\omega^{3\bar{1}},
\end{array}
\right.
\end{equation}
where the coefficients $b,c,f,g,h$ are real and satisfy $4gh=4f^2-1$. (Explicit expression of each coefficient in terms of
$a_1,\ldots,a_{12}$ can be given, but this information is not relevant and so we omit it.) Notice that the condition $4gh=4f^2-1$
is equivalent to the Jacobi identity $d^2=0$. Furthermore, these equations
can be reduced as follows:

\begin{proposition}\label{complex-moduli-g3}
Up to isomorphism, the complex structures with closed $(3,0)$-form on the Lie algebra
$\frg_3$ are
\begin{equation}\label{reduced_equations_e2+e11}
(\frg_3,J_x)\colon \, \left\{
\begin{array}{l}
d\omega^1=0,\\[4pt]
d\omega^2=-\frac12 \omega^{13} -(\frac12+xi)\omega^{1\bar{3}}+xi\,\omega^{3\bar{1}},\\[5pt]
d\omega^3=\frac12 \omega^{12} +(\frac12-\frac{i}{4x})\omega^{1\bar{2}}
+\frac{i}{4x}\,\omega^{2\bar{1}},
\end{array}
\right.
\end{equation}
where $x\in \mathbb{R}^+$.
\end{proposition}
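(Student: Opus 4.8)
The plan is to take the complex structure equations \eqref{equations_e2+e11} as the starting point and perform a sequence of changes of $(1,0)$-basis designed to normalize the coefficients $b,c,f,g,h$ one by one. First I would observe that $d\omega^1=0$ is preserved by any transformation of the form $\omega^1\mapsto \lambda\omega^1$ and by adding multiples of $\omega^1$ to $\omega^2$ and $\omega^3$. Since the right-hand sides of $d\omega^2$ and $d\omega^3$ all involve $\omega^1$ or $\omega^{\bar 1}$ as a factor, the essential freedom is: (i) rescaling $\omega^1$ by a nonzero complex number $\lambda$; (ii) an invertible linear substitution in the pair $(\omega^2,\omega^3)$ with constant coefficients, i.e. an element of $\mathrm{GL}(2,\mathbb C)$ acting on $(\omega^2,\omega^3)^t$; and (iii) adding to $\omega^2,\omega^3$ arbitrary combinations $\mu_j\omega^1+\nu_j\omega^{\bar 1}$ (this last move changes the $\omega^{1\bar 1}$-coefficients $b$ and $c$ and so should be used to kill them). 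The strategy is to use (iii) first to arrange $b=c=0$, then use the remaining $\mathrm{GL}(2,\mathbb C)$ and rescaling freedom to bring $(f,g,h)$ — constrained by $4gh=4f^2-1$ — to the one-parameter normal form appearing in \eqref{reduced_equations_e2+e11}.

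Concretely, I would argue as follows. Killing $b$: replace $\omega^2$ by $\omega^2 + \beta\,\omega^{\bar 1}$ for a suitable $\beta\in\mathbb C$; since $d\omega^{\bar1}=0$, this only modifies the coefficient of $\omega^{1\bar 1}$ in $d\omega^2$, so $b$ can be set to $0$, and similarly $c=0$ by adding a multiple of $\omega^{\bar 1}$ to $\omega^3$. (One should check these additions do not reintroduce anything into the other terms — they do not, because $\omega^{\bar1}\wedge\omega^{\bar1}=0$ and the only surviving wedge is with $\omega^1$, $\omega^{\bar 1}$, which is already present.) After this step the equations read
\begin{equation*}
d\omega^1=0,\quad
d\omega^2=-\tfrac12\omega^{13}+fi\,\omega^{1\bar2}-fi\,\omega^{2\bar1}-(\tfrac12+gi)\omega^{1\bar3}+gi\,\omega^{3\bar1},\quad
d\omega^3=\tfrac12\omega^{12}+(\tfrac12+hi)\omega^{1\bar2}-hi\,\omega^{2\bar1}-fi\,\omega^{1\bar3}+fi\,\omega^{3\bar1},
\end{equation*}
with $4gh=4f^2-1$. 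Now I would look for a substitution mixing $\omega^2$ and $\omega^3$ (together with a rescaling of $\omega^1$, and possibly of $\omega^2,\omega^3$) that eliminates $f$ and identifies $g$ and $-1/(4x)$, $h$ and $x$; the constraint $4gh=4f^2-1$ guarantees that once $f$ is removed the remaining two real parameters collapse to a single free parameter $x\in\mathbb R^+$ (the sign ambiguity being absorbed by the allowed reflections, e.g. $\omega^3\mapsto-\omega^3$ or swapping roles). The condition $4gh=4f^2-1$ with $f=0$ becomes $gh=-\tfrac14$, i.e. $g=-\tfrac1{4h}$, matching \eqref{reduced_equations_e2+e11} with $x=h$; one then checks $x>0$ can be assumed and that distinct $x$ give non-isomorphic structures.

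The main obstacle I expect is the second half: showing that $f$ can always be gauged away and that the residual family is exactly a single $\mathbb R^+$-parameter, with no further identifications. This requires carefully tracking how a general $\mathrm{GL}(2,\mathbb C)$-substitution on $(\omega^2,\omega^3)$ transforms the coefficient matrix built from $f,g,h$ — in effect the substitution acts by conjugation-type formulas on the "structure data", and one must verify that the orbit of each admissible $(f,g,h)$ meets the slice $\{f=0,\ g=-1/(4h)\}$ in essentially one point. I would handle this by writing the general allowed transformation explicitly, imposing that the transformed equations again have the shape \eqref{equations_e2+e11} (this is a strong constraint that cuts the transformation group down considerably — likely $\omega^2,\omega^3$ may only be rescaled and swapped, plus the $\omega^1\mapsto\lambda\omega^1$ and the $\omega^{\bar1}$-shifts), and then reading off the induced action on $(f,g,h)$; a short case analysis on whether $f=0$ or $f\neq 0$, using $4gh=4f^2-1\neq 0$ to ensure $g,h\neq0$, should finish it. The non-isomorphism of the $J_x$ for different $x$ would follow by exhibiting an invariant of the pair $(\frg_3,J_x)$ — for instance the value of a suitable ratio of structure constants, or the dimension of an associated cohomology group — that depends nontrivially on $x$, analogous to the "standard argument" invoked at the end of Proposition~\ref{complex-moduli-g1-g2}.
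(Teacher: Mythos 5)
Your outline (normalize $b=c=0$, then eliminate $f$ by mixing $\omega^2,\omega^3$, then use $4gh=4f^2-1$ to get a one-parameter family and fix the residual identifications) is the same route the paper takes, but the one step you actually execute is not a legitimate move. To kill $b$ and $c$ you replace $\omega^2$ by $\omega^2+\beta\,\omega^{\bar1}$ (and add a multiple of $\omega^{\bar1}$ to $\omega^3$). Adding an antiholomorphic term does not give a new $(1,0)$-basis for the \emph{same} complex structure: the resulting coframe is of mixed type, and declaring it to be of type $(1,0)$ defines a different almost complex structure on $\frg_3$, which need not be related to the original one by any automorphism. In a classification up to isomorphism the only admissible basis changes are $\mathrm{GL}(3,\mathbb{C})$-combinations of $\omega^1,\omega^2,\omega^3$ themselves. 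Moreover, even as a formal manipulation your bookkeeping is crossed: since $d\omega^{\bar1}=0$ one has $d(\omega^2+\beta\omega^{\bar1})=d\omega^2$, and rewriting it in the new coframe leaves the $\omega^{1\bar1}$-coefficient $b$ of $d\omega^2$ untouched (the terms $fi\,\omega^{1\bar2}-fi\,\omega^{2\bar1}$ produce no new $\omega^{1\bar1}$-component); what the shift actually changes is the $\omega^{1\bar1}$-coefficient of $d\omega^3$, because $\frac12\,\omega^{1}\wedge\omega^{2}=\frac12\,\omega^{1}\wedge\eta^{2}-\frac{\beta}{2}\,\omega^{1\bar1}$ with $\eta^2=\omega^2+\beta\omega^{\bar1}$. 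So your parenthetical check (``these additions do not reintroduce anything into the other terms'') is false, and $b$ cannot be removed this way in any case. The correct and easy fix is the one the paper uses: add multiples of the closed $(1,0)$-form $\omega^1$, i.e.\ pass to $\{\omega^1,\ \omega^2+2c\,\omega^1,\ \omega^3-2b\,\omega^1\}$, which preserves the shape of \eqref{equations_e2+e11} and sets $b=c=0$.

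Beyond this, the two decisive points of the proof are only announced, not carried out. The elimination of $f$ is done in the paper by the explicit mixing $\eta^1=\omega^1$, $\eta^2=\omega^2-\kappa\,\omega^3$, $\eta^3=\kappa\,\omega^2+\omega^3$ with $\kappa=\frac{g-h-\sqrt{1+(g+h)^2}}{2f}$; note in particular that genuine rotations of the pair $(\omega^2,\omega^3)$ \emph{do} preserve the shape of \eqref{equations_e2+e11}, contrary to your guess that only rescalings and swaps (plus $\omega^{\bar1}$-shifts) survive, so the case analysis you propose would start from a wrong description of the residual transformations. Likewise, the restriction to $x\in\mathbb{R}^+$ rests on the precise equivalence criterion $J_x\sim J_{x'}$ if and only if $xx'=-\frac14$, which folds the slice $\{f=0,\ 4gh=-1\}\cong\mathbb{R}\setminus\{0\}$ onto one branch of a hyperbola; exhibiting some invariant ``depending nontrivially on $x$'' would only show that many $J_x$ are distinct, not this exact identification. (Your identification ``$x=h$'' is also off — in \eqref{reduced_equations_e2+e11} the parameter is $x=g$ with $h=-\frac{1}{4x}$ — though this slip is harmless precisely because of the hyperbola identification.)
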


\begin{proof}
Observe first that with respect to the (1,0)-basis $\{ \omega^1, \omega^2+2c\,\omega^1, \omega^3-2b\,\omega^1 \}$,
the complex structure
equations express again as in (\ref{equations_e2+e11}) but with $b=c=0$, that is to say, one can suppose that the coefficients
$b$ and $c$ both vanish.

Let us prove next that we can also take the coefficient $f$ to be zero. To see this, let $\{\omega^1,\omega^2,\omega^3\}$
be a (1,0)-basis satisfying (\ref{equations_e2+e11}) with $b=c=0$ and $f\neq0$, and let us consider the (1,0)-basis
$\{\eta^1,\eta^2,\eta^3\}$ given by

\medskip

$
\eta^1=\omega^1,\qquad
\eta^2=\omega^{2}-\frac{g-h-\sqrt{1+(g+h)^2}}{2f}\,\omega^{3},\qquad
\eta^3=\frac{g-h-\sqrt{1+(g+h)^2}}{2f}\omega^{2}+\omega^3.
$

\medskip

\noindent A direct calculation shows that with respect to $\{\eta^1,\eta^2,\eta^3\}$ the corresponding coefficient $f$ vanishes.
Therefore, since $4gh=-1$ we are led to the reduced equations \eqref{reduced_equations_e2+e11},
where we have written $x$ instead of $g$.

Finally, let $J_x$ and $J_{x'}$ be two complex structures corresponding to $x,x'\in \mathbb{R}$.
It is easy to see that the
structures are equivalent if and only if $xx'=-\frac14$. This represents an hyperbola in the $(x,x')$-plane, so
the equivalence class is given by one of the branches of the hyperbola, that is, we can take $x>0$.
\end{proof}

\subsection{The indecomposable case}

Next we classify the complex structures on the indecomposable Lie algebras
$\frg_4,\ldots,\frg_9$.

\begin{lemma}\label{reduc-noprod}
Let $J$ be any complex structure on $\frg_k$ $(4\leq k\leq 7)$
%%%$\frg_4=A_{6,37}^{0,0,1}$, $\frg_5=A_{6,82}^{0,1,1}$, $\frg_6=A_{6,88}^{0,0,1}$ or $\frg_7=B_{6,6}^1$
with closed $(3,0)$-form.
Then, there is a $(1,0)$-basis $\{\omega^1,\omega^2,\omega^3\}$ such that
\begin{equation}\label{equations2-2}
\left\{
\begin{array}{l}
d\omega^1=A\,\omega^{1}\wedge (\omega^{3}+\omega^{\bar{3}}),\\[2pt]
d\omega^2=-A\,\omega^{2}\wedge (\omega^{3}+\omega^{\bar{3}}),\\[2pt]
d\omega^3=G_{11}\,\omega^{1\bar{1}}+G_{12}\,\omega^{1\bar{2}}+
\overline{G}_{12}\,\omega^{2\bar{1}}+G_{22}\,\omega^{2\bar{2}},
\end{array}
\right.
\end{equation}
where $A,G_{12}\in \mathbb{C}$ and $G_{11},G_{22}\in \mathbb{R}$, with $(G_{11},G_{12},G_{22})\not=(0,0,0)$, satisfy
\begin{equation}\label{equations2-2-cond}
|A|=1,\quad (A+\overline{A})G_{11}=0, \quad (A+\overline{A})G_{22}=0,\quad (A-\overline{A})G_{12}=0.
\end{equation}
%%%Notice that $G_{11},G_{12},G_{22}$ cannot be all zero.
\end{lemma}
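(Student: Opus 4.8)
The plan is to carry out, for each of $\frg_4,\dots,\frg_7$ separately, the same strategy as in Lemma~\ref{reduc-prod}: isolate a distinguished $(1,0)$-form, complete it to a $(1,0)$-basis, write down the structure equations and simplify by a linear change of the remaining two $(1,0)$-forms. The uniform input is the following structural remark. In each of $\frg_4,\dots,\frg_7$ (see Theorem~\ref{main-th}) one has $de^6=0$ while $de^1,\dots,de^5$ are linearly independent; hence $\langle e^6\rangle=\ker\bigl(d\colon\frg_k^*\to\Lambda^2\frg_k^*\bigr)$, so $b_1(\frg_k)=1$, and this line is exactly the annihilator of the nilradical $\frn:=[\frg_k,\frg_k]=\langle e_1,\dots,e_5\rangle$, a $5$-dimensional ideal. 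Given now a complex structure $J$ on $\frg_k$ with closed $(3,0)$-form, the intersection $W:=\frn\cap J\frn$ is a $J$-invariant subspace of dimension exactly $4$ (at least $4$ by a dimension count, and not $5$ because $J$ preserves no odd-dimensional subspace), so its annihilator $P^*:=W^{\circ}$ is a $2$-dimensional $J^*$-invariant subspace; since $e^6\in\frn^{\circ}\subseteq W^{\circ}=P^*$ and the only $J^*$-invariant plane through $e^6$ is $\langle e^6,J^*e^6\rangle$, we obtain $P^*=\langle e^6,J^*e^6\rangle$. Accordingly I take $\omega^3$ to be a generator of the $(1,0)$-line of $P^*$ normalized so that $\omega^3+\omega^{\bar 3}=2e^6$, and $\omega^1,\omega^2$ to span the $(1,0)$-plane of a $J^*$-invariant complement of $P^*$ in $\frg_k^*$, so that $\{\omega^1,\omega^2,\omega^3\}$ is a $(1,0)$-basis and $\omega^1,\omega^2$ restrict to a $(1,0)$-basis of the dual of $W$.

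The next step is to compute the differentials. For $\omega^3$ we have $d\omega^3=-i\,d(J^*e^6)$ with $d(J^*e^6)$ a \emph{real} two-form; integrability of $J$ forces the $(0,2)$-part of $d\omega^3$ — hence also its $(2,0)$-part, which is conjugate to the $(0,2)$-part of a real form — to vanish, so $d\omega^3$ is of type $(1,1)$, and being $-i$ times a real form it is Hermitian. Using the explicit structure equations of $\frg_k$ (equivalently, the explicit endomorphism $J_\rho^*$ computed via~\eqref{def-J} from a parametrization of $Z^3(\frg_k)$, as in Propositions~\ref{decomposable} and~\ref{indecomposable}) one checks that $d\omega^3$ moreover carries no term involving $\omega^3$ or $\omega^{\bar 3}$, i.e. $d\omega^3\in\Lambda^2\langle\omega^1,\omega^2,\omega^{\bar 1},\omega^{\bar 2}\rangle$, so that
\[
d\omega^3=G_{11}\,\omega^{1\bar 1}+G_{12}\,\omega^{1\bar 2}+\overline G_{12}\,\omega^{2\bar 1}+G_{22}\,\omega^{2\bar 2},\qquad G_{11},G_{22}\in\R,
\]
and $(G_{11},G_{12},G_{22})\neq(0,0,0)$ because $d\omega^3=0$ would place $\langle e^6,J^*e^6\rangle$ in $\ker d$, contradicting $b_1(\frg_k)=1$. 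For $\omega^1,\omega^2$, the same structure equations show that $d$ maps $\langle\omega^1,\omega^2,\omega^{\bar 1},\omega^{\bar 2}\rangle$ into $\langle\omega^1,\omega^2,\omega^{\bar 1},\omega^{\bar 2}\rangle\wedge e^6$; thus $d\omega^1$ and $d\omega^2$ are, respectively, the wedge of a linear combination of $\omega^1,\omega^2$ with $\omega^3+\omega^{\bar 3}$, and $d(\omega^{123})=0$ forces the corresponding $2\times 2$ matrix to be traceless. Diagonalizing that matrix exactly as in the proof of Lemma~\ref{reduc-prod} — its eigenvalues $\pm\nu$ being nonzero, since otherwise $\operatorname{ad}_{e_6}|_{\frn}$ would be nilpotent and $\frg_k$ nilpotent — and then rescaling $\omega^3$ by $|\nu|$ puts these equations into
\[
d\omega^1=A\,\omega^1\wedge(\omega^3+\omega^{\bar 3}),\qquad d\omega^2=-A\,\omega^2\wedge(\omega^3+\omega^{\bar 3}),\qquad |A|=1.
\]

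Finally one imposes the Jacobi identity $d^2=0$. Since $\omega^3+\omega^{\bar 3}=2e^6$ is closed, $d(d\omega^1)=d(d\omega^2)=0$ hold automatically, while expanding $d(d\omega^3)=0$ by means of the two displayed equations yields precisely
\[
(A+\overline A)G_{11}=0,\qquad (A+\overline A)G_{22}=0,\qquad (A-\overline A)G_{12}=0,
\]
which is \eqref{equations2-2-cond} (with $|A|=1$ already recorded), so the equations take the form \eqref{equations2-2}. The main obstacle is the one flagged above: showing that $d\omega^3$ has no $\omega^3,\omega^{\bar 3}$-component and that $d\omega^1,d\omega^2$ carry no "horizontal" $(2,0)$- or $(1,1)$-terms does not follow from the abstract data and must be verified case by case, by writing out $Z^3(\frg_k)$ and $J_\rho^*$ for $k=4,5,6,7$ and reading off the normal form. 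I expect this to be a finite but somewhat tedious computation, one manageable Lie algebra at a time, of the same flavour as those in Propositions~\ref{decomposable} and~\ref{indecomposable}.
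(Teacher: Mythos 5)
There is a genuine gap, and it sits exactly where you flagged ``the main obstacle''. Your basis is only specified up to the choice of an \emph{arbitrary} $J^*$-invariant complement of $P^*=\langle e^6,J^*e^6\rangle$, and the two structural claims you then propose to ``check'' --- (i) that $d\omega^3$ has no terms involving $\omega^3,\omega^{\bar3}$, and (ii) that $d\omega^1,d\omega^2$ lie in $\langle\omega^1,\omega^2,\omega^{\bar1},\omega^{\bar2}\rangle\wedge e^6$ --- are not properties of $J$ alone: they depend on which complement you took, and for a generic admissible choice they are simply false. Indeed, starting from a basis in which \eqref{equations2-2} holds and replacing $\omega^1$ by $\omega^1+c\,\omega^3$ gives another basis of exactly your type, but now $d\omega^3$ acquires $\omega^{1\bar3}$, $\omega^{3\bar1}$, $\omega^{3\bar3}$ terms and $d\omega^1$ acquires horizontal $(1,1)$ terms such as $c\,G_{11}\omega^{1\bar1}$. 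So the step cannot be settled by a case-by-case verification in your basis; the actual content of the lemma is to prove that a complement \emph{can be chosen} so that (i) and (ii) hold, and your proposal gives no mechanism for producing it.

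The paper supplies precisely that mechanism, and none of its ingredients appear in your argument: one first shows (computing $J_\rho^*$ from closed $\rho$'s) that the $e^1$-coefficient of $J^*e^6$ is nonzero, so that $e^{23}$ (resp.\ $e^{24}+e^{35}$) can be written as a multiple of $d\eta^3$ plus a form wedge $e^6$ as in \eqref{expresion}; this yields equations of the shape \eqref{ecus-noprod1}, after which the coefficients $\lambda,\mu$ are absorbed into $\eta^1,\eta^2$, the terms $E,F$ are killed using $d^2=0$ together with the fact that $d(\frg_k^*)$ is not annihilated by a single real $1$-form, the closedness of $\eta^{123}$ forces $G_{13}=G_{23}=0$ and $D=-A-G_{33}$, and \emph{unimodularity} is what finally gives $G_{33}=0$ --- an input your scheme never invokes. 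Once these reductions are made, your concluding steps (trace-free $2\times 2$ matrix from $d\omega^{123}=0$, diagonalization as in Lemma~\ref{reduc-prod}, normalization $|A|=1$, and \eqref{equations2-2-cond} from $d(d\omega^3)=0$) do match the paper; a smaller point is that your justification that the eigenvalues $\pm\nu$ are nonzero via ``${\rm ad}_{e_6}|_{\frn}$ nilpotent $\Rightarrow\frg_k$ nilpotent'' is not immediate (the relevant $4$-dimensional subspace need not be ${\rm ad}_{e_6}$-invariant); the clean argument, as in the paper, is that a zero eigenvalue would produce a closed $(1,0)$-form among combinations of $\omega^1,\omega^2$, contradicting $b_1(\frg_k)=1$.
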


\begin{proof}
Let us consider first the Lie algebra $\frg_4$ with structure equations given as in Theorem~\ref{main-th}.
Any element $\rho\in Z^3(\frg_4)$ is given by
$$
\begin{array}{l}
\rho=a_1\,e^{123}+a_2\,e^{126}+a_3\,(e^{125}-e^{134})+a_4\,(e^{124}+e^{135})+a_5\,e^{136}\\[4pt]
\phantom{iii}+a_6\,(e^{156}+e^{234})+a_7\,(e^{146}-e^{235})+a_8\,e^{236}+a_9\,e^{246}+a_{10}\,e^{256}\\[4pt]
\phantom{iii}+a_{11}\,e^{346}+a_{12}\,e^{356}+a_{13}\,e^{456},
\end{array}
$$
where $a_1,\ldots,a_{13}\in \mathbb{R}$.
A direct calculation shows that if $a_3^2+a_4^2=0$ then there do not exist closed 3-forms $\rho$ satisfying the conditions
$d(J_\rho^*\rho)=0$ and $\lambda(\rho)<0$.

Suppose that $a_3^2+a_4^2\not=0$. Then, an element $\rho\in Z^3(\frg_4)$ satisfies the condition
$d(J_\rho^*\rho)=0$ if and only if
$a_{10}=\frac{a_3(a_6^2-a_7^2) + 2a_4a_6a_7 - a_{11}(a_3^2+a_4^2)}{a_3^2+a_4^2}$,
$a_{12}=\frac{2a_3a_6a_7 - a_4(a_6^2 - a_7^2) + a_9(a_3^2 + a_4^2)}{a_3^2+a_4^2}$
and $a_{13}=0$. Moreover, under these relations one has that $\lambda(\rho)=-4 (a_3 a_9-a_4a_{11}+a_6 a_7)^2\leq 0$.

Let $\rho\in Z^3(\frg_4)$ be such that $\lambda(\rho)<0$ and $d(J_\rho^*\rho)=0$. A direct calculation shows that
$\tilde{J}_\rho^*e^6$ is given by
$$
\begin{array}{l}
\tilde{J}_\rho^*e^6=2 (a_3^2+a_4^2)e^1+2 (a_3 a_6+a_4 a_7)e^2+2 (a_3 a_7-a_4 a_6) e^3\\[4pt]
\phantom{\tilde{J}_\rho^*e^6iii}+2 (a_3a_{11}+a_4 a_9+a_7^2) e^6.
\end{array}
$$
Therefore, the coefficient of $J_\rho^*e^6$ in $e^1$ is nonzero for any $\rho$.

A similar computation for the Lie algebras $\frg_5$, $\frg_6$ and $\frg_7$ shows that for any complex structure $J_\rho$
with closed $(3,0)$-form, we also have that
$$
J_\rho^*e^6= c_1 e^1+c_2 e^2+c_3 e^3+c_4 e^4+c_5 e^5+c_6 e^6,
$$
where the coefficient $c_1$ is non-zero.

Let us consider the $(1,0)$-form $\eta^3=e^6-i J_{\rho}^* e^6$. From the structure equations
of $\frg_k$ $(4\leq k\leq 7)$ in Theorem~\ref{main-th}, it follows that
%\begin{equation}\label{expresion}
$$
\begin{array}{lll}
d\eta^3=i c_1 e^{23} - i \alpha\wedge e^6, &\text{if}&\frg=\frg_4,\\[6pt]
d\eta^3=i c_1 (e^{24}+e^{35}) - i \alpha\wedge e^6, &\text{if}&\frg=\frg_5, \frg_6, \frg_7,
\end{array}
$$
%\end{equation}
where $\alpha$ is a 1-form. Since $c_1\not=0$ we can write the 2-forms $e^{23}$ and $e^{24}+e^{35}$ as
\begin{equation}\label{expresion}
\begin{array}{lll}
e^{23}= -\frac{i}{c_1} d\eta^3+\frac{1}{c_1}\alpha\wedge e^6, &\text{if}&\frg=\frg_4,\\[6pt]
e^{24}+e^{35}= -\frac{i}{c_1} d\eta^3+\frac{1}{c_1}\alpha\wedge e^6, &\text{if}&\frg=\frg_5, \frg_6, \frg_7.
\end{array}
\end{equation}
Now, let $\eta^1,\eta^2$ be such that $\{\eta^1,\eta^2,\eta^3\}$ is a basis of $(1,0)$-forms.
Since $e^6$ is closed and $\eta^3+\eta^{\bar{3}}=2e^6$, the integrability
of the complex structure implies that $d\eta^3$ has no component of type $(2,0)$ and
$$
d\eta^3=G_{11}\,\eta^{1\bar{1}}+G_{12}\,\eta^{1\bar{2}}+G_{13}\,\eta^{1\bar{3}}+
\overline{G}_{12}\,\eta^{2\bar{1}}+G_{22}\,\eta^{2\bar{2}}+G_{23}\,\eta^{2\bar{3}}+
\overline{G}_{13}\,\eta^{3\bar{1}}+\overline{G}_{23}\,\eta^{3\bar{2}}+G_{33}\,\eta^{3\bar{3}},
$$
for some $G_{11},G_{22},G_{33}\in \mathbb{R}$ and $G_{12},G_{13},G_{23}\in \mathbb{C}$.

From the structure of the Lie algebras $\frg_k$ $(4\leq k\leq 7)$,
the relation \eqref{expresion} and taking into account that $d\eta^3$ is of type (1,1),
it follows that there exist $\lambda,\mu\in \mathbb{C}$ such that
\begin{equation}\label{ecus-noprod1}
\left\{
\begin{array}{l}
d\eta^1=\lambda\,d\eta^3 +(A\, \eta^{1}+B\, \eta^{2}+E\, \eta^{3})\wedge (\eta^{3}+\eta^{\bar{3}}),\\[7pt]
d\eta^2=\mu\,d\eta^3 +(C\, \eta^{1}+D\, \eta^{2}+F\, \eta^{3})\wedge (\eta^{3}+\eta^{\bar{3}}),\\[7pt]
d\eta^3=\ \, G_{11}\,\eta^{1\bar{1}}+G_{12}\,\eta^{1\bar{2}}+G_{13}\,\eta^{1\bar{3}}\\[5pt]
\phantom{d\eta^3ii}+\overline{G}_{12}\,\eta^{2\bar{1}}+G_{22}\,\eta^{2\bar{2}}+G_{23}\,\eta^{2\bar{3}}\\[5pt]
\phantom{d\eta^3ii}+\overline{G}_{13}\,\eta^{3\bar{1}}+\overline{G}_{23}\,\eta^{3\bar{2}}+G_{33}\,\eta^{3\bar{3}},
\end{array}
\right.
\end{equation}
for some $A,B,C,D,E,F\in \mathbb{C}$.

We will see next that these complex equations can be reduced to equations of the form~\eqref{equations2-2}.
Notice first that with respect to the (1,0)-basis $\{\eta^1-\lambda\,\eta^3, \eta^2-\mu\,\eta^3, \eta^3\}$
we get complex equations of the form~\eqref{ecus-noprod1} with $\lambda=\mu=0$.
So, without loss of generality we can suppose $\lambda=\mu=0$.
Moreover, the coefficients $E$ and $F$ also vanish. In fact, suppose for example that $E\not=0$ (the case $F\not=0$ is similar).
Using~\eqref{ecus-noprod1} with $\lambda=\mu=0$, the condition $d(d\eta^1)=0$ is equivalent to
$$
E G_{11}= E G_{12}= E G_{13}= E G_{22}= E G_{23}= 0,
$$
so $E\not=0$ implies $d\eta^3=G_{33}\,\eta^{3\bar{3}}=G_{33}\,\eta^{3}\wedge (\eta^{3}+\eta^{\bar{3}})$.
But this is a contradiction with the structure of the Lie algebras $\frg_k$ $(4\leq k\leq 7)$, because $d(\frg_k^*)$ would be annihilated by
the real 1-form $\eta^{3}+\eta^{\bar{3}}$.

From now on, we suppose that $\lambda=\mu=E=F=0$ in the equations~\eqref{ecus-noprod1}.
A direct calculation shows that
$$
d \eta^{123}=\overline{G}_{13}\,\eta^{123\bar{1}}+\overline{G}_{23}\,\eta^{123\bar{2}}+(A+D+G_{33})\eta^{123\bar{3}},
$$
so $\eta^{123}$ is closed if and only if $G_{13}=G_{23}=0$ and $D=-A-G_{33}$.
Moreover, the unimodularity of the Lie algebras $\frg_k$ $(4\leq k\leq 7)$ implies that $G_{33}=0$. In fact, taking
the real basis $\{f^1,\ldots,f^6\}$ of $\frg_k^*$ given by
$$
\eta^1=f^2+if^3, \quad\quad  \eta^2=f^4+if^5, \quad\quad  \eta^3=f^6+if^1,
$$
we get that the trace of ${\rm ad}_{f_6}$
is zero if and only if $G_{33}=-2\,\Real A-2\,\Real D$, which implies, using that $G_{33}=-A-D$,
that the coefficient $G_{33}=0$.

Summing up, we have proved the existence of a $(1,0)$-basis $\{\eta^1,\eta^2,\eta^3\}$ satisfying the reduced complex equations
\begin{equation}\label{ecus-noprod2}
\left\{
\begin{array}{l}
d\eta^1=(A\, \eta^{1}+B\, \eta^{2})\wedge (\eta^{3}+\eta^{\bar{3}}),\\[7pt]
d\eta^2=(C\, \eta^{1}-A\, \eta^{2})\wedge (\eta^{3}+\eta^{\bar{3}}),\\[7pt]
d\eta^3=\ \, G_{11}\,\eta^{1\bar{1}}+G_{12}\,\eta^{1\bar{2}}+\overline{G}_{12}\,\eta^{2\bar{1}}+G_{22}\,\eta^{2\bar{2}},
\end{array}
\right.
\end{equation}
where $A,B,C,G_{12}\in \mathbb{C}$ and $G_{11},G_{22}\in \mathbb{R}$.

Notice that $A^2+BC\not=0$ because otherwise the (1,0)-form $A\, \eta^{1}+B\, \eta^{2}$ would be closed, but this is
a contradiction to $b_1(\frg_k)=1$, for $4\leq k\leq 7$. Therefore, arguing as in the proof of
Lemma~\ref{reduc-prod} we can suppose that $B=C=0$ and $|A|=1$ in \eqref{ecus-noprod2}.
Finally, the condition $d(d\eta^3)=0$ is satisfied if and only if $(A+\overline{A})G_{11}=(A+\overline{A})G_{22}=(A-\overline{A})G_{12}=0$.
\end{proof}

As a consequence of the previous lemma, we have the following classification of complex structures on $\frg_k$, for $4\leq k\leq 7$.

\begin{proposition}\label{complex-moduli-g4-g5-g6-g7}
Up to isomorphism there is only one complex structure $J$ with closed $(3,0)$-form
on the Lie algebras $\frg_5$ and $\frg_6$,
and two such complex structures on the Lie algebras $\frg_4$ and $\frg_7$.
More concretely, the complex structures are:
\begin{align}
&(\frg_4,J_{\pm})\colon
d\omega^1\!=i\,\omega^{1}\!\wedge (\omega^{3}+\omega^{\bar{3}}),\
d\omega^2\!=\! -i\,\omega^{2}\!\wedge (\omega^{3}+\omega^{\bar{3}}),\
d\omega^3\!=\pm\,\omega^{1\bar{1}};\label{equations2-2-g4}\\[5pt]
&(\frg_5,J)\colon
d\omega^1\!=\omega^{1}\!\wedge (\omega^{3}+\omega^{\bar{3}}),\
d\omega^2\!=\! -\omega^{2}\!\wedge (\omega^{3}+\omega^{\bar{3}}),\
d\omega^3\!=\omega^{1\bar{2}}+\omega^{2\bar{1}};\label{equations2-2-g5}\\[5pt]
&(\frg_6,J) \colon
d\omega^1\!=i\,\omega^{1}\!\wedge (\omega^{3}+\omega^{\bar{3}}),\
d\omega^2\!=\! -i\,\omega^{2}\!\wedge (\omega^{3}+\omega^{\bar{3}}),\
d\omega^3\!=\omega^{1\bar{1}}+\omega^{2\bar{2}};\label{equations2-2-g6}\\[5pt]
&(\frg_7,J_{\pm}) \colon
d\omega^1\!=i\,\omega^{1}\!\wedge\! (\omega^{3}\!\!+\omega^{\bar{3}}),\
d\omega^2\!=\! -i\,\omega^{2}\!\wedge\! (\omega^{3}\!\!+\omega^{\bar{3}}),\
d\omega^3\!=\pm(\omega^{1\bar{1}}\!\!-\omega^{2\bar{2}}).\label{equations2-2-g7}
\end{align}
\end{proposition}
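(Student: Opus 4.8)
The plan is to start from the reduced complex equations \eqref{equations2-2} provided by Lemma~\ref{reduc-noprod}, together with the constraints \eqref{equations2-2-cond}, and then carry out a case analysis on $A$ followed by a normalization of the triple $(G_{11},G_{12},G_{22})$ via the residual automorphisms preserving the shape of those equations. The constraint $|A|=1$ together with $(A+\overline A)G_{11}=(A+\overline A)G_{22}=(A-\overline A)G_{12}=0$ and $(G_{11},G_{12},G_{22})\neq(0,0,0)$ immediately splits into two mutually exclusive cases: either $A+\overline A=0$, i.e. $A=\pm i$ (so $A=i$ after the sign normalization of $\omega^3$), in which case $G_{12}$ is unconstrained but $G_{11},G_{22}$ are free reals; or $A-\overline A=0$, i.e. $A=\pm1$ (again $A=1$), in which case $G_{12}=0$ and $G_{11},G_{22}$ are free reals. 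One must then recall which underlying real Lie algebra each normalized family realizes — this is exactly the point where $\frg_4,\frg_5,\frg_6,\frg_7$ get separated, by passing to the real basis $\eta^1=f^2+if^3$, $\eta^2=f^4+if^5$, $\eta^3=f^6+if^1$ as in the proof of Lemma~\ref{reduc-noprod} and matching the resulting $df^i$ against the structure equations in Theorem~\ref{main-th}.

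Concretely, the remaining work is to exhibit, in each of the two cases, the subgroup of $\GL(\frg)$ (equivalently, of transformations of the $(1,0)$-coframe preserving \eqref{equations2-2}) that acts on the parameters $(G_{11},G_{12},G_{22})$, and to read off the orbits. In the case $A=1$ one has $G_{12}=0$ and transformations $\omega^1\mapsto p\,\omega^1$, $\omega^2\mapsto q\,\omega^2$, $\omega^3\mapsto \omega^3+(\text{imaginary shift})$ rescale $G_{11}\mapsto |p|^2 G_{11}$ and $G_{22}\mapsto |q|^2 G_{22}$, so the sign pattern $(\mathrm{sgn}\,G_{11},\mathrm{sgn}\,G_{22})$ is the only invariant; swapping $\omega^1\leftrightarrow\omega^2$ (which is an automorphism only for the Lie algebras in which it is) identifies $(+,0)\sim(0,+)$ etc., and one checks that up to this the possibilities realize $\frg_5$ via $d\omega^3=\omega^{1\bar2}+\omega^{2\bar1}$ after a further change mixing $\omega^1,\omega^2$ to convert a $G_{12}$-type term — wait, more carefully: $A=1$ forces $G_{12}=0$, so the genuinely off-diagonal family $d\omega^3=\omega^{1\bar2}+\omega^{2\bar1}$ arises instead from the $A=1$ branch after a complex rotation of $\omega^1,\omega^2$ that keeps $A=1$ but creates $G_{12}\neq0$ while killing $G_{11},G_{22}$ — this is precisely the subtlety to get right. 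In the case $A=i$ one has $G_{11},G_{22}$ real and $G_{12}$ complex arbitrary, and the rescalings plus an $S^1$-rotation of $\omega^1$ (resp. $\omega^2$) allow one to diagonalize or to set $G_{12}$ real, ultimately landing on $d\omega^3=\pm\omega^{1\bar1}$ (this is $\frg_4$), $d\omega^3=\omega^{1\bar1}+\omega^{2\bar2}$ (this is $\frg_6$), and $d\omega^3=\pm(\omega^{1\bar1}-\omega^{2\bar2})$ (this is $\frg_7$); the $\pm$ signs surviving for $\frg_4$ and $\frg_7$ but not $\frg_6$ reflect the absence of an automorphism reversing the relevant sign, which must be verified by a direct argument (as in the last paragraph of the proof of Proposition~\ref{complex-moduli-g3}).

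The main obstacle I anticipate is bookkeeping rather than conceptual: one must (i) correctly identify, for each normal form, which of the four Lie algebras $\frg_4,\ldots,\frg_7$ it underlies — a nontrivial matching of $6$-dimensional real structure equations — and (ii) rule out spurious equivalences and establish the genuine inequivalences, in particular that $J_+$ and $J_-$ on $\frg_4$ (and on $\frg_7$) are not related by any Lie algebra automorphism. For (ii) the cleanest route is to compute an invariant distinguishing them, e.g. comparing $d\omega^3$ modulo the action of the (now fully determined) automorphism group, or exhibiting a cohomological or Hermitian-geometric invariant that changes sign; since the automorphism group of each $\frg_k$ is small and explicit, checking that no automorphism sends $\omega^{1\bar1}$ to $-\omega^{1\bar1}$ while preserving the first two equations is a finite linear-algebra verification. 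Once these matchings and the (in)equivalences are in hand, the list \eqref{equations2-2-g4}--\eqref{equations2-2-g7} follows, completing the classification.
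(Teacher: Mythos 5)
Your case analysis of the constraints \eqref{equations2-2-cond} is inverted, and this is a genuine error rather than bookkeeping. If $A=\pm i$ then $A+\overline A=0$, so the first two conditions are vacuous and $G_{11},G_{22}$ are free, while $A-\overline A\neq 0$ forces $G_{12}=0$; if $A=\pm 1$ then $A-\overline A=0$ leaves $G_{12}$ free, while $A+\overline A\neq 0$ forces $G_{11}=G_{22}=0$. You state exactly the opposite ("$A=i$: $G_{12}$ unconstrained; $A=1$: $G_{12}=0$"), and your mid-proof "wait" doubles down on the wrong reading ("$A=1$ forces $G_{12}=0$") and then tries to rescue the $\frg_5$ normal form $d\omega^3=\omega^{1\bar2}+\omega^{2\bar1}$ by a "complex rotation of $\omega^1,\omega^2$ that keeps $A=1$ but creates $G_{12}\neq 0$". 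That move is not available: any transformation genuinely mixing $\omega^1$ and $\omega^2$ destroys the first two equations of \eqref{equations2-2}, because $d\omega^1$ and $d\omega^2$ carry the \emph{opposite} coefficients $A$ and $-A$; only rescalings $\omega^1\mapsto p\,\omega^1$, $\omega^2\mapsto q\,\omega^2$ and the swap $\omega^1\leftrightarrow\omega^2$ (with suitable adjustment of $\omega^3$) survive. The same objection applies to your proposed "diagonalization" of $(G_{11},G_{12},G_{22})$ in the $A=i$ branch -- which, once the constraints are read correctly, is unnecessary anyway since $G_{12}=0$ there automatically. A smaller gap: your dichotomy "$A=\pm i$ or $A=\pm 1$" is not immediate from $|A|=1$; one must argue as in the paper that if $\Real A\neq 0$ then $G_{11}=G_{22}=0$, so $G_{12}\neq 0$, and then the third condition forces $\Imag A=0$ (otherwise all three coefficients vanish, contradicting $(G_{11},G_{12},G_{22})\neq(0,0,0)$).

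The correct route, which is the paper's, is: (i) $\Real A\neq 0$ gives $A=1$, $G_{11}=G_{22}=0$, $G_{12}$ normalized to $1$ by rescaling $\omega^1$, yielding \eqref{equations2-2-g5} and, via a real basis, $\frg_5$; (ii) $\Real A=0$ gives $A=i$, $G_{12}=0$, and after rescaling $\omega^1,\omega^2$ one reduces to $(G_{11},G_{22})\in\{(\pm1,0),(1,1),(1,-1),(-1,1)\}$ (the case $(-1,-1)\sim(1,1)$, and $(0,\pm1)$ reduces to $(\pm1,0)$ by the swap), which are then matched to $\frg_4$, $\frg_6$ and $\frg_7$ respectively, with the residual $\pm$ inequivalences checked against the automorphism groups as you describe. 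Your final list and your plan for step (ii)'s inequivalences are fine, but as written the derivation of the normal forms does not go through.
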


\begin{proof}
First notice that in the equations \eqref{equations2-2}, after changing the
sign of $\omega^3$ if necessary, we can always suppose that $A=\cos \theta+i\sin \theta$ with angle $\theta\in[0,\pi)$.
We have the following cases:

\smallskip

\noindent $\bullet$ If $\cos \theta\not=0$, then \eqref{equations2-2-cond} implies $G_{11}=G_{22}=0$ and $\sin \theta\, G_{12}=0$,
so $\sin \theta=0$ because $(G_{11},G_{12},G_{22})\not=(0,0,0)$ is satisfied if and only if $G_{12}\not=0$.
Therefore, in this case $A=1$ and, moreover, we can normalize the coefficient $G_{12}$
(it suffices to consider $G_{12}\,\omega^1$ instead of $\omega^1$).
So the complex structure equations take the form \eqref{equations2-2-g5}, and
in terms of the real basis $\{e^1,\ldots,e^6\}$ defined by $\omega^1=e^2-ie^3$, $\omega^2=e^5+ie^4$
and $\omega^3=\frac12e^6-2ie^1$, one has
$$
de^1=e^{24}+e^{35},\ \
de^2=e^{26},\ \
de^3=e^{36},\ \
de^4=-e^{46},\ \
de^5=-e^{56},\ \
de^6=0,
$$
that is, the underlying Lie algebra is $\frg_5$.

\smallskip

\noindent $\bullet$ If $\cos \theta=0$, then \eqref{equations2-2-cond} implies that $A=i$ and $G_{12}=0$.
Therefore, the complex structure equations become
$$
d\omega^1=i\,\omega^{1}\wedge (\omega^{3}+\omega^{\bar{3}}),\quad
d\omega^2=-i\,\omega^{2}\wedge (\omega^{3}+\omega^{\bar{3}}),\quad
d\omega^3=G_{11}\,\omega^{1\bar{1}}+G_{22}\,\omega^{2\bar{2}},
$$
where $(G_{11},G_{22})\not=(0,0)$. We have the following possibilities:

\smallskip

 {\bf -} When $G_{22}=0$ we can suppose that $G_{11}=\pm 1$
(it suffices to consider $\sqrt{|G_{11}|}\,\omega^1$ instead of $\omega^1$), and then
the complex structure equations reduce to \eqref{equations2-2-g4}.
In terms of the real basis $\{e^1,\ldots,e^6\}$ given by $\omega^1=e^2-ie^3$, $\omega^2=e^4+ie^5$
and $\omega^3=-\frac12e^6\pm 2ie^1$, we arrive at
$$
de^1=e^{23},\ \
de^2=-e^{36},\ \
de^3=e^{26},\ \
de^4=-e^{56},\ \
de^5=e^{46},\ \
de^6=0,
$$
that is, the underlying Lie algebra is $\frg_4$. A standard argument
allows to conclude that the two complex structures in \eqref{equations2-2-g4}
are non-isomorphic.

\smallskip

 {\bf -} The case $G_{11}=0$ easily reduces to the previous case, so it does not produce any non-isomorphic complex structure.

\smallskip

 {\bf -} Finally, if $G_{11}\not=0$ and $G_{22}\not=0$ then we can suppose $G_{11}=\pm 1$ and $G_{22}=\pm 1$
(it suffices to consider $\sqrt{|G_{kk}|}\,\omega^k$ instead of $\omega^k$ for $k=1,2$).
It is clear that the case $G_{11}=G_{22}=-1$ is equivalent to $G_{11}=G_{22}=1$, so it remains to study
%%% by taking $\omega'^1=\omega^2$, $\omega'^2=\omega^1$ and $\omega'^3=-\omega^3$.
the following three cases: $(G_{11},G_{22})=(1,1),(1,-1),(-1,1)$.
In terms of the real basis $\{\beta^1,\ldots,\beta^6\}$ defined by $\omega^1=\beta^2+i\beta^4$, $\omega^2=\beta^3+i\beta^5$
and $\omega^3=\frac12\beta^6+ 2i\beta^1$, one has
$$
d\beta^1=-G_{11}\,\beta^{24}-G_{22}\,\beta^{35},\
d\beta^2=-\beta^{46},\
d\beta^3=\beta^{56},\
d\beta^4=\beta^{26},\
d\beta^5=-\beta^{36},\
d\beta^6=0.
$$

When $(G_{11},G_{22})=(1,1)$,
taking the basis $e^1=-2 \beta^1$, $e^2=\beta^2+\beta^3$, $e^3=-\beta^4+\beta^5$, $e^4=\beta^4+\beta^5$, $e^5=\beta^2-\beta^3$ and $e^6=-\beta^6$,
the real structure equations are
$$
de^1=e^{24}+e^{35},\
de^2=-e^{36},\
de^3=e^{26},\
de^4=-e^{56},\
de^5=e^{46},\
de^6=0,
$$
so the underlying Lie algebra is $\frg_6$ and the complex structure is given by \eqref{equations2-2-g6}.

The cases $(G_{11},G_{22})=(1,-1)$ and $(G_{11},G_{22})=(-1,1)$ both correspond to the same Lie algebra
(in fact, a change in the sign of~$\beta^1$ gives an isomorphism), so we suppose next that $(G_{11},G_{22})=(1,-1)$,
i.e.
$$
d\beta^1=-\beta^{24}+\beta^{35},\
d\beta^2=-\beta^{46},\
d\beta^3=\beta^{56},\
d\beta^4=\beta^{26},\
d\beta^5=-\beta^{36},\
d\beta^6=0.
$$
Taking $e^1=-\beta^1$, $e^3=-\beta^3$ and $e^6=-\beta^6$, we conclude that $\frg_7$ is
the underlying Lie algebra. Therefore, the complex structures on $\frg_7$ are given by \eqref{equations2-2-g4},
and it can be proved that they are non-isomorphic.
\end{proof}

Next we find that there are infinitely many non-isomorphic complex structures on the Lie algebra $\frg_8$.

\begin{proposition}\label{complex-moduli-g8}
Let $J$ be any complex structure on $\frg_8$ with closed volume $(3,0)$-form.
Then, there is a $(1,0)$-basis $\{\omega^1,\omega^2,\omega^3\}$ satisfying one of the following reduced equations:
\begin{align}
&(\frg_8,J) \colon \
d\omega^1= 2i\,\omega^{13} +\omega^{3\bar{3}},\ \,
d\omega^2= -2i\,\omega^{23},\ \,
d\omega^3=0;\label{equations2-3-1}\\[5pt]
&(\frg_8,J') \colon \
d\omega^1=2i\,\omega^{13}+\omega^{3\bar{3}},\ \,
d\omega^2= -2i\,\omega^{23}+\omega^{3\bar{3}},\ \,
d\omega^3=0;\label{equations2-3-2}\\[5pt]
&(\frg_8,J_A) \colon \! \left\{
\begin{array}{l}
d\omega^1=-(A-i)\omega^{13}-(A+i)\omega^{1\bar{3}},\\[2pt]
d\omega^2=(A-i)\omega^{23}+(A+i)\omega^{2\bar{3}},\\[2pt]
d\omega^3=0,\label{equations2-3-3}
\end{array}
\right.\\[5pt]
&\hskip .2cm \mbox{where $A\in \mathbb{C}$ with $\Imag A\not=0$.}\nonumber
\end{align}
%%%where $A\in \mathbb{C}$ with $\Imag A\not=0$.

\noindent Moreover, the complex structures above are non-isomorphic.
\end{proposition}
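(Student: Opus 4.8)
The plan is to mimic the structure of the proof of Lemma~\ref{reduc-noprod}: start from the general closed $3$-form $\rho \in Z^3(\frg_8)$, impose the two conditions $\lambda(\rho)<0$ and $d(J_\rho^*\rho)=0$, and extract from the resulting family of complex structures a distinguished closed $(1,0)$-form. First I would write $\frg_8 = (e^{16}-e^{25},e^{15}+e^{26},-e^{36}+e^{45},-e^{35}-e^{46},0,0)$, observe that $e^5,e^6$ are closed, and compute $\tilde J_\rho^*$ on the span of $e^5,e^6$ using \eqref{criterio}. As in Lemmas~\ref{1-forma-cerrada}--\ref{reduc-prod}, I expect the plane $\langle e^5,e^6\rangle$ to be $\tilde J_\rho^*$-invariant for every admissible $\rho$ (this is forced because $d$ sends $\frg_8^*$ into $\Lambda^2\langle e^1,\dots,e^4,e^5,e^6\rangle$ in a way compatible with the $\SO$-type structure on the two $2$-planes $\langle e^1,e^2\rangle$, $\langle e^3,e^4\rangle$). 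Hence $\omega^3 := e^5 - iJ^*e^5$ (suitably rescaled) is a closed $(1,0)$-form, and $d\omega^3=0$ in all three families.

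Next I would complete $\{\omega^1,\omega^2,\omega^3\}$ to a $(1,0)$-basis. Since the structure equations of $\frg_8$ force $d(\frg_8^*)$ to be spanned by wedges of $\{e^1,e^2,e^3,e^4\}$ with $\{e^5,e^6\}$ \emph{together with} the pure terms $e^{16}-e^{25}$, etc., the differentials $d\omega^1,d\omega^2$ will be combinations of $\omega^{13},\omega^{1\bar 3},\omega^{23},\omega^{2\bar3}$ and the $(1,1)$-forms built from $\omega^3$; using $d(\omega^{123})=0$ (the existence of the closed $(3,0)$-form) to kill cross terms, and rescaling, one reduces to
\begin{equation*}
d\omega^1 = a\,\omega^{13} + b\,\omega^{1\bar3} + c\,\omega^{3\bar3},\qquad
d\omega^2 = a'\,\omega^{23} + b'\,\omega^{2\bar3} + c'\,\omega^{3\bar3},\qquad
d\omega^3 = 0,
\end{equation*}
with constraints coming from $d^2=0$ and from the requirement that the real Lie algebra be $\frg_8$ (not a nilpotent or a different solvable algebra). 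The trichotomy in the statement should then correspond to: (i) the \emph{holomorphically parallelizable} case $b=b'=0$ (giving $d\omega^i = \pm 2i\,\omega^{i3} + (\ast)\,\omega^{3\bar3}$, i.e.\ Nakamura's original structure), split further according to whether one or both of the $\omega^{3\bar3}$-coefficients can be normalized to $1$ — this yields $J$ and $J'$; and (ii) the case $b\neq 0$, where rescaling and the reality/isomorphism constraints force the coefficients into the one-parameter shape $-(A-i),-(A+i)$ with $\Imag A\neq 0$, giving $J_A$. The normalization $2i$ and the signs should fall out of matching $\lambda(\rho)<0$ and the specific structure constants of $\frg_8$.

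Finally, for the non-isomorphism claim I would use discrete/continuous invariants that are preserved under Lie algebra automorphisms intertwining complex structures. The cleanest separators: $J$ and $J'$ are holomorphically parallelizable (all $d\omega^i$ are of type $(2,0)+$ a $(1,1)$-piece proportional to $\omega^{3\bar3}$ only) while the $J_A$ are not, since for $J_A$ the forms $d\omega^1,d\omega^2$ genuinely involve $\omega^{1\bar3},\omega^{2\bar3}$ and this cannot be removed by a change of $(1,0)$-basis respecting $d\omega^3=0$ (one checks the space of closed $(1,0)$-forms, or the rank of the $\db$-part of $d$ on $\Lambda^{1,0}$). To distinguish $J$ from $J'$ one computes, say, the dimension of $\{\alpha\in\Lambda^{1,0} : d\alpha \text{ has no } \omega^{3\bar3}\text{-term}\}$, which is $2$ for $J$ but $1$ for $J'$. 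For the family $J_A$, I would show that an automorphism $F$ with $F\circ J_A = J_{A'}\circ F$ must preserve $\langle\omega^3\rangle$ and act on $\langle\omega^1,\omega^2\rangle$ essentially diagonally (because $\omega^1,\omega^2$ are distinguished as the eigendirections with opposite signs of the $\omega^{1\bar3},\omega^{2\bar3}$ coefficients), and then reading off the transformation of the coefficient pair $(-(A-i),-(A+i))$ forces $A'=A$ (a change of sign of $\omega^3$ sends $A\mapsto -A$, but $\Imag A\neq 0$ together with a normalization of $\Imag A>0$ pins it down). The main obstacle will be this last step: organizing the automorphism computation for $J_A$ so that the continuous parameter $A$ is genuinely an invariant, i.e.\ ruling out hidden equivalences — this requires carefully tracking how a general $F\in\mathrm{Aut}(\frg_8)$ acts on the flag determined by $\omega^3$ and the $\pm$ eigenstructure, rather than just on the abstract complex equations.
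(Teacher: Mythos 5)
Your overall route is the paper's: compute $J_\rho$ explicitly from a generic closed $\rho$ subject to $\lambda(\rho)<0$ and $d(J_\rho^*\rho)=0$, build an adapted $(1,0)$-basis whose third element is closed (the plane $\langle e^5,e^6\rangle$ is indeed $J_\rho^*$-invariant), and case-split the reduced equations. However, two steps as written would fail. First, the trichotomy is organized incorrectly. The intermediate equations (those in \eqref{equations_nakamura}) read $d\omega^1=-(A-i)\omega^{13}-(A+i)\omega^{1\bar{3}}+B\,\omega^{3\bar{3}}$, $d\omega^2=(A-i)\omega^{23}+(A+i)\omega^{2\bar{3}}+C\,\omega^{3\bar{3}}$, and the correct split is $A\neq -i$ versus $A=-i$: when $A\neq -i$ the $\omega^{3\bar{3}}$-terms are absorbed by the shear $\eta^1=-(A+i)\omega^1+B\omega^3$, $\eta^2=(A+i)\omega^2+C\omega^3$ (rescaling, which is all you invoke, cannot remove them), while for $A=-i$ they cannot be absorbed and the subcases $B=C=0$, exactly one nonzero, both nonzero give respectively $J_{A=-i}$ in \eqref{equations2-3-3}, then \eqref{equations2-3-1}, then \eqref{equations2-3-2}. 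Your dichotomy ``$b=b'=0$ yields $J,J'$; $b\neq 0$ yields $J_A$'' therefore misses the bi-invariant structure $J_{-i}$ (cf.\ Remark~\ref{remark-Nakamura}) and leaves unjustified why the $\omega^{3\bar{3}}$-coefficients disappear in the family \eqref{equations2-3-3}. Note also that $d^2=0$ imposes no condition on your ansatz and $d\omega^{123}=0$ only gives $b'=-b$; the crucial facts that the $(2,0)$- and $(1,1)$-coefficients are $-(A\mp i)$ with one and the same $A$, and that $\Imag A\neq 0$ (which comes from $\lambda(\rho)<0$), have to be read off from the explicit $J_\rho$, not from ``reality constraints'' in the abstract.

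Second, your non-isomorphism invariants do not work. The claim that the structures \eqref{equations2-3-3} ``genuinely involve $\omega^{1\bar{3}},\omega^{2\bar{3}}$'' fails at $A=-i$; to separate $\{J,J'\}$ from the family one can instead use $\dim\{\alpha\in\Lambda^{1,0}\,:\,\db\alpha=0\}$, which equals $2$ for \eqref{equations2-3-1} and \eqref{equations2-3-2}, but $3$ for $A=-i$ and $1$ for $A\neq -i$. More seriously, the separator you propose for $J$ versus $J'$ is miscomputed: for $J'$ the condition that $d(x\omega^1+y\omega^2+z\omega^3)$ have no $\omega^{3\bar{3}}$-component is $x+y=0$, a $2$-dimensional space, exactly as for $J$ (where it is $x=0$); so this invariant equals $2$ in both cases and distinguishes nothing. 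A working substitute: with $V=\ker\db\cap\Lambda^{1,0}$ one has $\partial V\subset\Lambda^2V$ for $J$ (since $\partial\omega^2=-2i\,\omega^{23}$), whereas for $J'$ one gets $\partial(\omega^1-\omega^2)=2i\,(\omega^1+\omega^2)\wedge\omega^3\notin\Lambda^2V$. Finally, the automorphism analysis needed to show that distinct values of $A$ give non-equivalent structures is exactly the part you leave open, so the ``non-isomorphic'' claim is not yet established by your argument.
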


\begin{proof}
With respect to the structure equations of $\frg_8$ given in Theorem~\ref{main-th},
any closed 3-form $\rho\in Z^3(\frg_8)$ is given by
$$
\begin{array}{l}
\rho=a_1\,e^{126}+a_2\,e^{135}+a_3\,e^{145}+a_4\,e^{156}+a_5\,e^{235}+a_6(e^{146}+e^{236})\\[4pt]
\phantom{iii}+a_7\,e^{245}+a_8(e^{136}-e^{246})+a_9\,e^{256}+a_{10}\,e^{346}+a_{11}\,e^{356}+a_{12}\,e^{456},
\end{array}
$$
where $a_1,\ldots,a_{12}\in \mathbb{R}$.
A direct calculation shows that such a $\rho$ satisfies the conditions $d(J_\rho^*{\rho})=0$ and $\lambda(\rho)<0$
if and only if
$a_1=0$, $a_2=-a_7$, $a_3=a_5$, $a_{10}=0$ and $a_6a_7-a_5a_8\neq 0$. Moreover, in this case $\lambda(\rho)=-4 (a_6 a_7-a_5 a_8)^2$.

The associated complex structures $J_\rho^*$ express in terms of the real basis $\{e^1,\ldots,e^6\}$ as
$$
%%%\begin{equation}\label{fam1}
\begin{array}{l}
 J_{\rho}^* e^1= e^2+ \frac{a_5 a_{12} - a_7 a_{11}}{a_6 a_7 - a_5 a_8} e^5+ \frac{a_6 a_{12} - a_8 a_{11}}{a_6 a_7 - a_5 a_8} e^6,\\[7pt]
 J_{\rho}^* e^2= - e^1 +\frac{a_5 a_{11} + a_7 a_{12}}{a_6 a_7 - a_5 a_8}e^5 +\frac{a_6 a_{11} + a_8 a_{12}}{a_6 a_7 - a_5 a_8} e^6,\\[7pt]
 J_{\rho}^* e^3= e^4+\frac{a_4 a_7-a_5 a_9}{a_6 a_7 - a_5 a_8} e^5 + \frac{a_4 a_8 - a_6 a_9}{a_6 a_7 - a_5 a_8} e^6,\\[7pt]
 J_{\rho}^* e^4= - e^3-\frac{a_4 a_5 + a_7 a_9}{a_6 a_7 - a_5 a_8} e^5 -\frac{a_4 a_6 + a_8 a_9}{a_6 a_7 - a_5 a_8} e^6,\\[7pt]
 J_{\rho}^* e^5=\frac{a_5 a_6 +  a_7 a_8}{a_6 a_7 - a_5 a_8} e^5+ \frac{a_6^2 + a_8^2}{a_6 a_7 - a_5 a_8} e^6,\\[7pt]
 J_{\rho}^* e^6=-\frac{a_5^2 + a_7^2}{a_6 a_7 - a_5 a_8} e^5- \frac{a_5 a_6 + a_7 a_8}{a_6 a_7 - a_5 a_8} e^6.
\end{array}
%%%\end{equation}
$$
Let us consider the basis of (1,0)-forms $\{\omega^1,\omega^2,\omega^3\}$ given by
$$
%%%\begin{equation}\label{basis}
\begin{array}{l}
\omega^1=e^1-i J_{\rho}^* e^1= e^1 - i \left( e^2+ k_1 e^5+ k_2 e^6\right),\\[7pt]
\omega^2=e^3-i J_{\rho}^* e^3= e^3 - i \left( e^4+k_3 e^5 + k_4 e^6\right),\\[7pt]
\omega^3=\frac{1}{2c}(e^5-i  J_{\rho}^* e^5)= \frac{1}{2c} e^5-i\left(\frac{b}{2c} e^5+ \frac{1}{2} e^6\right),
\end{array}
%%%\end{equation}
$$
where $k_1=\frac{a_5 a_{12} - a_7 a_{11}}{a_6 a_7 - a_5 a_8}$, $k_2=\frac{a_6 a_{12} - a_8 a_{11}}{a_6 a_7 - a_5 a_8}$,
$k_3=\frac{a_4 a_7-a_5 a_9}{a_6 a_7 - a_5 a_8}$, $k_4=\frac{a_4 a_8 - a_6 a_9}{a_6 a_7 - a_5 a_8}$,
$b=\frac{a_5 a_6 +  a_7 a_8}{a_6 a_7 - a_5 a_8}$ and $c=\frac{a_6^2 + a_8^2}{a_6 a_7 - a_5 a_8}$.
Notice that $c\not=0$, and $-2(a_6+i a_8)\omega^{123}=\rho+i\, J_{\rho}^* \rho$.

\smallskip

With respect to this basis, the complex structure equations are
\begin{equation}\label{equations_nakamura}
\left\{
\begin{array}{l}
d\omega^1=-(A-i)\omega^{13}-(A+i)\omega^{1\bar{3}}+B\,\omega^{3\bar{3}},\\[7pt]
d\omega^2=(A-i)\omega^{23}+(A+i)\omega^{2\bar{3}}+C\,\omega^{3\bar{3}},\\[7pt]
d\omega^3=0,
\end{array}
\right.
\end{equation}
where $A=b+ic$, $B=2c(k_1+ik_2)$ and $C=-2c(k_3+ik_4)$. Notice that $\Imag A=c\not=0$.

\noindent Now, we will reduce the complex equations \eqref{equations_nakamura} as follows:

\smallskip

\noindent $\bullet$ If $A\neq-i$, then with respect to the (1,0)-basis $\{\eta^1,\eta^2,\eta^3\}$ given by
$$
\eta^1=-(A+i)\omega^1+B\omega^3,\quad\eta^2=(A+i)\omega^2+C\omega^3,\quad\eta^3=\omega^3,
$$
the complex structure equations are of the form \eqref{equations2-3-3}.

\smallskip

\noindent $\bullet$ If $A=-i$, the equations \eqref{equations_nakamura} reduce to
$$
J_{(B,C)} \colon \, d\omega^1=2i\omega^{13}+B\omega^{3\bar{3}},\ \
d\omega^2=-2i\omega^{23}+C\omega^{3\bar{3}},
\ \ d\omega^3=0.
$$
Notice that the structures $J_{(B,C)}$ and $J_{(C,B)}$ are equivalent, since it suffices
to consider the change of basis $\eta^1=\omega^2$, $\eta^2=\omega^1$, $\eta^3=-\omega^3$.
Now:

\smallskip

 {\bf -} if $B=C=0$ then the complex equations are of the form \eqref{equations2-3-3} with $A=-i$;

\smallskip

 {\bf -} if only one of the coefficients $B,C$ is nonzero, for instance $B$,
then taking $\frac{1}{B}\omega^1$ instead of $\omega^1$, we arrive at the complex equations \eqref{equations2-3-1};

\smallskip

 {\bf -} finally, if $B,C\neq0$ then we can normalize both coefficients and
 the corresponding complex equations are \eqref{equations2-3-2}.

It is straightforward to check that the complex structures given in equations \eqref{equations2-3-1}--\eqref{equations2-3-3}
are non-isomorphic.
\end{proof}

\begin{remark}\label{remark-Nakamura}
{\rm
Note that on $\frg_8$ there exists a
unique complex structure $J$ that is abelian \cite{ABD},
i.e. satisfying $[JX,JY]=[X,Y]$,
which corresponds to the value $A=i$ in equations \eqref{equations2-3-3},
and a unique bi-invariant complex structure \cite{Nak}, corresponding to $A=-i$ in \eqref{equations2-3-3}.
}
\end{remark}

%%% The Lie algebra $\frg_9=B_{6,4}^{\pm 1}$

Finally, let us consider now the Lie algebra $\frg_9$.  With respect to the structure equations given in Theorem~\ref{main-th},
any closed 3-form $\rho\in Z^3(\frg_9)$ is given by
$$
\begin{array}{rl}
\rho &= a_1\,(e^{124}-e^{135})+a_2\,e^{145}+a_3\,e^{146}+a_4\,e^{156}+a_5(e^{136}-e^{245})\\[3pt]
 &\ \ +\,a_6(e^{125}+e^{134}- e^{246})+
a_7\,e^{256}+a_8(e^{126}+e^{345})+a_9\,e^{346}\\[3pt]
 &\ \ +\,a_{10}(e^{125}+e^{134}+e^{356})+a_{11}\,e^{456}.
\end{array}
$$
By imposing the closedness of $J_\rho^* \rho$ together with the condition tr$(J_\rho^{*2})<0$,
one can arrive after a long computation to an explicit description of the complex structure $J_{\rho}$, which
allows us to prove that $\{e^2,e^4,e^6,J_{\rho}^* e^2,J_{\rho}^* e^4,J_{\rho}^* e^6\}$
are always linearly independent.
Therefore, the forms
$$
\omega^1=e^6-i J_{\rho}^* e^6,\quad \omega^2=e^2-i J_{\rho}^* e^2,\quad \omega^3=e^4-i J_{\rho}^* e^4,
$$
constitute a (1,0)-basis for the complex structure $J_{\rho}$, and one can show that with respect to this basis
the complex structure equations have the form
\begin{equation}\label{equations_B64}
\left\{
\begin{array}{l}
d\omega^1=-c^2\omega^{1\bar{1}}-c\,\omega^{3\bar{1}}-c\,\omega^{1\bar{3}}-\omega^{3\bar{3}},\\[7pt]
d\omega^2=(\frac{c}{2}+cE-\frac{i}{2}G)\omega^{1\bar{1}}-\frac{i}{2}\omega^{2\bar{1}}+E\,\omega^{3\bar{1}}+(\frac12+cG) \omega^{1\bar{3}}\\[5pt]
\phantom{d\omega^2=}+G\,\omega^{3\bar{3}}+\frac{i}{2} \omega^{12} +(cG-E) \omega^{13},\\[7pt]
d\omega^3=c(c^2+\frac{i}{2})\omega^{1\bar{1}}+(c^2+\frac{i}{2})\omega^{3\bar{1}}+c^2\omega^{1\bar{3}}+c\,\omega^{3\bar{3}}-\frac{i}{2} \omega^{13},
\end{array}
\right.
\end{equation}
where $c$ is real and $E,G \in \mathbb{C}$.
(Explicit expression of each coefficient in terms of
$a_1,\ldots,a_{11}$ can be given, but this information will not be relevant in what follows and so we omit it.)
In the following result we prove that all the complex structures are equivalent.

\begin{proposition}\label{complex-moduli-g9}
Up to isomorphism, there is only one complex structure with closed $(3,0)$-form on the Lie algebra
$\frg_9$, whose complex equations are
\begin{equation}\label{reduced_equations_B64}
(\frg_9,J) \colon \,
d\omega^1\!=\!-\omega^{3\bar{3}},\ \
d\omega^2\!=\frac{i}{2} \omega^{12}+\frac12 \omega^{1\bar{3}}-\frac{i}{2}\omega^{2\bar{1}},\ \
d\omega^3\!=\!-\frac{i}{2} \omega^{13}+\frac{i}{2}\omega^{3\bar{1}}.
\end{equation}
\end{proposition}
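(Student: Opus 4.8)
The plan is to start from the general complex structure equations \eqref{equations_B64}, which were derived from the stable-form formalism applied to $\frg_9$, and successively normalize the free parameters $c\in\R$ and $E,G\in\C$ by changing the $(1,0)$-basis, until only the rigid equations \eqref{reduced_equations_B64} remain. First I would focus on the coefficient $c$ appearing in $d\omega^1$. Looking at $d\omega^1=-c^2\omega^{1\bar1}-c\,\omega^{3\bar1}-c\,\omega^{1\bar3}-\omega^{3\bar3}=-(c\,\omega^1+\omega^3)\wedge(\overline{c\,\omega^1+\omega^3})$ (up to conjugation conventions), one sees that the natural move is to replace $\omega^3$ by $\tilde\omega^3:=c\,\omega^1+\omega^3$, which should absorb $c$ and turn the first equation into $d\tilde\omega^3$-type terms; equivalently one checks that after this substitution $d\omega^1=-\tilde\omega^3\wedge\tilde\omega^{\bar3}=-\tilde\omega^{3\bar3}$. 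I expect that the same substitution simultaneously simplifies $d\omega^3$, since the coefficients there ($c(c^2+\tfrac{i}{2})$, $c^2+\tfrac{i}{2}$, $c^2$, $c$) are exactly the ones produced by expanding a $c\,\omega^1+\omega^3$ combination against the $\omega^{3\bar3}$ and $\omega^{13}$ terms; so after this step the equations for $\omega^1$ and $\omega^3$ should already be in the final form, with $d\omega^1=-\omega^{3\bar3}$ and $d\omega^3=-\tfrac{i}{2}\omega^{13}+\tfrac{i}{2}\omega^{3\bar1}$.

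Next I would handle the $E$ and $G$ parameters sitting in $d\omega^2$. The strategy is to replace $\omega^2$ by $\omega^2+\lambda\,\omega^1+\mu\,\omega^3$ for suitable $\lambda,\mu\in\C$ (and possibly rescale $\omega^2$), and to use the freedom in $\lambda,\mu$ to kill the unwanted coefficients. Concretely, adding a multiple of $\omega^1$ changes $d\omega^2$ by a multiple of $d\omega^1=-\omega^{3\bar3}$, which can be used to set the $\omega^{3\bar3}$-coefficient $G$ to zero; adding a multiple of $\omega^3$ changes $d\omega^2$ by a multiple of $d\omega^3$, which involves $\omega^{13}$ and $\omega^{3\bar1}$ and can be used to eliminate the $(cG-E)\omega^{13}$ term (and one should check this does not re-introduce bad terms elsewhere, using $d^2=0$, i.e. the Jacobi identity, which constrains the coefficients so that the remaining terms are forced into the shape of \eqref{reduced_equations_B64}). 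After these normalizations the equation for $\omega^2$ should collapse to $d\omega^2=\tfrac{i}{2}\omega^{12}+\tfrac12\omega^{1\bar3}-\tfrac{i}{2}\omega^{2\bar1}$, matching \eqref{reduced_equations_B64}.

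Finally, I would verify that the resulting equations \eqref{reduced_equations_B64} do define a complex structure on a Lie algebra isomorphic to $\frg_9$: pass to a real basis via $\omega^1=f^6+if^1$, $\omega^2=f^2+if^3$, $\omega^3=f^4+if^5$ (or a similar normalization), compute the real structure equations, and match them against $B_{6,4}^1=(e^{45},e^{15}+e^{36},e^{14}-e^{26}+e^{56},-e^{56},e^{46},0)$ after a linear change of basis; also check $d^2=0$ for \eqref{reduced_equations_B64} directly and confirm that the canonical bundle is trivial, i.e. $d(\omega^{123})=0$, which is immediate from the equations. The main obstacle I anticipate is the bookkeeping in the second step: one must choose $\lambda$ and $\mu$ in the right order and track how each substitution interacts with the already-normalized equations for $\omega^1$ and $\omega^3$, making sure (via the Jacobi constraints already encoded in \eqref{equations_B64}) that no new obstructions appear and that the $\omega^{12}$, $\omega^{1\bar3}$ and $\omega^{2\bar1}$ coefficients settle to exactly $\tfrac{i}{2}$, $\tfrac12$, $-\tfrac{i}{2}$ rather than to some scaled variant — here one may need to also rescale $\omega^2$ by a real or unit-modulus factor and use the residual $\omega^1,\omega^3$ scaling freedom left over from the first step.
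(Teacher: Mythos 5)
Your proposal is correct and follows essentially the same route as the paper: the paper also normalizes the parameters by a linear change of $(1,0)$-basis, taking $\sigma^3=c\,\omega^1+\omega^3$ (your $\tilde\omega^3$) and $\sigma^2=icE\,\omega^1+\omega^2+iE\,\omega^3$ after first absorbing $G$ via $\omega^2+G\,\omega^1$, which is exactly your scheme of adding multiples of $\omega^1$ and $\omega^3$ to $\omega^2$. No rescaling of $\omega^2$ turns out to be needed, and the verification that the underlying algebra is $\frg_9$ is already built into the derivation of \eqref{equations_B64}.
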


\begin{proof}
First, notice that one can suppose that $G=0$ by taking $\omega^2+G\omega^1$ instead of $\omega^2$ in the
equations \eqref{equations_B64}. Now,
let $\{\omega^1,\omega^2,\omega^3\}$ be a (1,0)-basis satisfying (\ref{equations_B64}) with $G=0$,
and consider the new (1,0)-basis
$$
\{\sigma^1=\omega^1,\ \ \sigma^2=icE\omega^1+\omega^2+iE\omega^3,\ \ \sigma^3=c\omega^1+\omega^3\}.
$$
A direct calculation shows that this basis satisfies equations (\ref{equations_B64}) with $c=0$ and $E=G=0$,
that is, the complex equations can always be reduced to (\ref{reduced_equations_B64}).
In particular, all the complex structures are equivalent.
\end{proof}

In the following theorem we sum up the classification of invariant complex structures with closed (3,0)-form on
solvmanifolds obtained in this section.

\begin{theorem}\label{solvmanifolds-complex-classification}
Let $M=\Gamma\backslash G$ be a $6$-dimensional solvmanifold and denote by $\frg$ the Lie algebra of $G$.
If $J$ is an invariant complex structure on $M$ with closed $(3,0)$-form, then the pair $(\frg,J)$ is isomorphic to one
and only one of the complex structures given in Propositions~$\ref{complex-moduli-g1-g2}$,
$\ref{complex-moduli-g3}$, $\ref{complex-moduli-g4-g5-g6-g7}$, $\ref{complex-moduli-g8}$
or~$\ref{complex-moduli-g9}$.
\end{theorem}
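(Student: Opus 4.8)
The plan is to assemble the theorem from the results already established in this section together with Proposition~\ref{holglobalform} and Theorem~\ref{main-th}; no new computation is needed, only a careful consolidation. First I would apply Proposition~\ref{holglobalform}: if $M=\Gamma\backslash G$ carries an invariant complex structure $J$ admitting a nowhere vanishing holomorphic $(3,0)$-form, then that form is automatically invariant, hence it descends to a non-zero closed $(3,0)$-form on the Lie algebra $\frg$ of $G$. Since $G$ admits a lattice, it is unimodular \cite{Milnor}, and, $M$ being a genuine solvmanifold, $\frg$ is a non-nilpotent solvable Lie algebra. Thus $(\frg,J)$ satisfies exactly the hypotheses treated in Sections~\ref{sec-preli} and~\ref{sec-reduc}.

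Next I would invoke Theorem~\ref{main-th} to conclude that $\frg$ is isomorphic to one of $\frg_1,\frg_2^{\alpha}\ (\alpha\geq0),\frg_3,\ldots,\frg_9$. Then, according to which Lie algebra in this list $\frg$ turns out to be, the corresponding proposition provides the complete list of complex structures with closed $(3,0)$-form up to equivalence, i.e. up to an automorphism of $\frg$: Proposition~\ref{complex-moduli-g1-g2} covers $\frg_1$ and $\frg_2^{\alpha}$, Proposition~\ref{complex-moduli-g3} covers $\frg_3$, Proposition~\ref{complex-moduli-g4-g5-g6-g7} covers $\frg_4,\ldots,\frg_7$, Proposition~\ref{complex-moduli-g8} covers $\frg_8$, and Proposition~\ref{complex-moduli-g9} covers $\frg_9$. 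This yields the ``at least one'' half of the statement: $(\frg,J)$ is isomorphic to one of the models listed in those five propositions.

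For the ``and only one'' half I would argue in two layers. Within a fixed Lie algebra, each of the cited propositions already asserts that the complex structures it lists are pairwise non-isomorphic (one structure for $\frg_1,\frg_2^{0},\frg_5,\frg_6,\frg_9$, exactly two for $\frg_2^{\alpha}$ with $\alpha>0$ and for $\frg_4,\frg_7$, and a one-parameter family for $\frg_3$ and for $\frg_8$, the non-equivalence being checked in each proof). Across distinct Lie algebras nothing further is needed: the algebras $\frg_1,\frg_2^{\alpha}\ (\alpha\geq0),\frg_3,\ldots,\frg_9$ are pairwise non-isomorphic — in particular $\frg_2^{\alpha}\not\cong\frg_2^{\alpha'}$ for $\alpha\neq\alpha'$ — so a complex structure on one of them can never be equivalent, as a pair $(\frg,J)$, to a complex structure on another. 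Combining the two layers gives that $(\frg,J)$ is isomorphic to exactly one of the models.

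The only point that requires any attention is the bookkeeping in the last step: one must observe that an isomorphism of pairs $(\frg,J)\cong(\frg',J')$ restricts to a Lie algebra isomorphism $\frg\cong\frg'$, so that the within-algebra non-equivalence statements of the five propositions glue consistently with the pairwise non-isomorphism of $\frg_1,\ldots,\frg_9$. Since this is immediate, there is no genuine obstacle; the theorem is a summary of the preceding classification.
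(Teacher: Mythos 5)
Your proposal is correct and matches the paper's intent exactly: the paper treats this theorem as a summary of Propositions~\ref{complex-moduli-g1-g2}, \ref{complex-moduli-g3}, \ref{complex-moduli-g4-g5-g6-g7}, \ref{complex-moduli-g8} and \ref{complex-moduli-g9}, obtained via Proposition~\ref{holglobalform} and Theorem~\ref{main-th}, and gives no further argument. Your consolidation (descent to $\frg$, unimodularity from the lattice, the within-algebra non-equivalences from the cited propositions, and pairwise non-isomorphism of the underlying Lie algebras for the "only one" part) is precisely the implicit reasoning of the paper.
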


%%%%%%%%%%%%%%%%%%%%%%%%%%%%%%%%%%%%%%%%%%%%%%%%%%%%%%%%%%%%%%%%%%%
\section{Existence of special Hermitian metrics}\label{sec-metrics}
%%%%%%%%%%%%%%%%%%%%%%%%%%%%%%%%%%%%%%%%%%%%%%%%%%%%%%%%%%%%%%%%%%%

\noindent In this section we use the classification of complex structures obtained
in the previous section to study the existence of several special Hermitian metrics.
We will center our attention on SKT, generalized Gauduchon, balanced and strongly Gauduchon metrics.

Let $(M,J,g)$ be a Hermitian manifold of real dimension $2n$ with
fundamental $2$-form $F (\cdot, \cdot) = g(J \cdot, \cdot)$. Since the metric $g$ is determined
by the form $F$, from now on we will denote a Hermitian metric also by $F$.
An \emph{SKT} (\emph{strong K\"ahler with torsion}) metric is a Hermitian metric
satisfying $\partial \db F =0$ (for more details see e.g. \cite{FPS} and the references therein).

Recently, Fu, Wang and Wu \cite{FWW} introduced and studied \emph{generalized $k$-th Gauduchon} metrics,
defined by the condition
$\partial \db F^k  \wedge F^{n -k-1} =0$, where $1 \leq k \leq n -1$.
Notice that any SKT metric is a $1$-st Gauduchon metric.

For any compact Hermitian manifold $(M,J, F)$ and for any integer $1 \leq k \leq n - 1$,
in \cite{FWW} it is proved the existence of a unique constant $\gamma_k (F)$ and a
(unique up to a constant) function $v \in {\mathcal C}^{\infty} (M)$ such that
$\frac{i}{2} \partial \db (e^v F^k) \wedge F^{n - k -1} = \gamma_k (F) e^v F^n$.
The constant $\gamma_k(F)$ is invariant under biholomorphisms and its sign is an invariant of the conformal class of $F$.
Thus, $\gamma_k (F)$ is $>0$ ($= 0$, or $< 0$) if and only if there exists $\tilde F$
in the conformal class of $F$ such that
$\frac{i}{2} \partial \db  \tilde F^k \wedge \tilde F^{n - k - 1} > 0  \, (=0, \, {\mbox {or}} \, < 0)$.

On the other hand, a Hermitian metric is called \emph{balanced}
if the fundamental form $F$ satisfies that $F^{n-1}$ is a closed form,
and it is said to be \emph{strongly Gauduchon} (sG for short)
if the $(n,n-1)$-form $\partial F^{n-1}$ is $\bar\partial$-exact.
It is obvious from the definitions that balanced implies sG.
Strongly Gauduchon metrics have been introduced recently in \cite{Pop0},
whereas balanced metrics were previously considered in \cite{Mi}.

Next we study the existence of special Hermitian metrics on 6-dimensional solvmanifolds $(M=\Gamma\backslash G,J)$
endowed with an invariant complex structure $J$ with
holomorphically trivial canonical bundle.
By Proposition~\ref{holglobalform} the latter condition is equivalent to the existence of an invariant non-zero closed (3,0)-form.
Notice that the symmetrization process
can be applied to this situation to conclude that the existence of SKT, balanced or sG metrics on $M$
reduces to the level of the Lie algebra $\frg$ of $G$ (see \cite{COUV,FG} for more details).
Thus, our strategy will consist in starting with the classification of pairs $(\frg,J)$ obtained
in Theorem~\ref{solvmanifolds-complex-classification} and then find the $J$-Hermitian structures $F$
on $\frg$ that satisfy the required conditions.

Notice that given a (1,0)-basis $\{\omega^1,\omega^2,\omega^3\}$ for the complex structure $J$, a generic
Hermitian structure $F$ on the Lie algebra $\frg$ is expressed as
\begin{equation}\label{2form}
2\,F=i\,(r^2\omega^{1\bar1}+s^2\omega^{2\bar2}+t^2\omega^{3\bar3})+u\omega^{1\bar2}-\bar
u\omega^{2\bar1}+v\omega^{2\bar3}-\bar
v\omega^{3\bar2}+z\omega^{1\bar3}-\bar
z\omega^{3\bar1},
\end{equation}
where the coefficients $r^2,\,s^2,\,t^2$ are non-zero real numbers and $u,\, v,\, z\in\C$ satisfy $r^2s^2>|u|^2$, $s^2t^2>|v|^2$, $r^2t^2>|z|^2$ and $r^2s^2t^2 + 2\Real(i\bar u\bar v z)>t^2|u|^2 + r^2|v|^2 + s^2|z|^2$.

\smallskip

Firstly we study the SKT geometry.

\begin{theorem}\label{skt-geometry}
Let $(M=\Gamma\backslash G,J)$ be a $6$-dimensional solvmanifold endowed with
an invariant complex structure $J$ with
holomorphically trivial canonical bundle,
%%%closed $(3,0)$-form,
and denote by $\frg$ the Lie algebra of~$G$.
Then, $(M,J)$ has an SKT metric if and only if $\frg\cong \frg_2^0$ or $\frg_4$.
\end{theorem}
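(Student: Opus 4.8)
The plan is to use the complete classification of pairs $(\frg,J)$ from Theorem~\ref{solvmanifolds-complex-classification} and, for each representative, write down the generic invariant Hermitian structure $F$ as in \eqref{2form}, compute $\partial\db F$ in the corresponding $(1,0)$-basis, and decide whether the resulting system of equations on the parameters $r^2,s^2,t^2,u,v,z$ admits a solution compatible with the positivity constraints. By the symmetrization argument recalled just before the statement, it suffices to work at the Lie algebra level, so the problem is reduced to a finite check (the families $\frg_2^\alpha$, $\frg_3$, $\frg_8$ each depend on a parameter, but $\partial\db F$ depends on it polynomially, so each is still a single computation).

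First I would organize the nine Lie algebras by their complex structure equations. For $\frg_1,\frg_2^0,\frg_2^\alpha\ (\alpha>0)$ the equations \eqref{equations2-1-g1}--\eqref{equations2-1-g2-alpha} have $d\omega^3=0$ and $d\omega^1,d\omega^2$ proportional to $\omega^{i}\wedge(\omega^3+\omega^{\bar3})$; for $\frg_4,\dots,\frg_7$ one adds a $(1,1)$-term to $d\omega^3$ as in \eqref{equations2-2-g4}--\eqref{equations2-2-g7}; for $\frg_8$ one has the three families \eqref{equations2-3-1}--\eqref{equations2-3-3}; and for $\frg_9$ the single structure \eqref{reduced_equations_B64}. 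In each case $\partial F$ and $\db F$ are computed by Leibniz from these equations, then $\partial\db F$ is a $(2,2)$-form whose six complex coefficients (in the basis $\omega^{i\bar\jmath k\bar l}$) must all vanish. I expect that for $\frg_2^0$ and $\frg_4$ the vanishing conditions can be met — indeed for $\frg_4$ the introduction already announces a new SKT example, and $\frg_2^0\cong A_{5,17}^{0,0,1}\oplus\R$ carries the abelian-type structure \eqref{equations2-1-g2-0} which is a natural SKT candidate — while for the remaining seven the system forces $t^2=0$ or some other violation of positivity, hence no SKT metric.

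Concretely, for $\frg_1$ (equations \eqref{equations2-1-g1}) one finds $\partial\db F$ proportional to $t^2\,\omega^{13\bar1\bar3}$ (up to a nonzero constant) plus lower terms, and the coefficient of $\omega^{1\bar1 3\bar3}$ turns out to be a nonzero multiple of $r^2+s^2$ or of $t^2$, which cannot vanish; the analogous obstruction should appear for $\frg_2^\alpha$ with $\alpha\neq0$ (the real part of the coefficient $A=\cos\theta+i\sin\theta$ producing an inhomogeneous term), for $\frg_5,\frg_6,\frg_7$ (where the $(1,1)$-part of $d\omega^3$ contributes a positive-definite combination to one coefficient of $\partial\db F$), for $\frg_8$ in the families \eqref{equations2-3-2} and \eqref{equations2-3-3} with $\Imag A\neq 0$ arbitrary, and for $\frg_9$. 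The genuinely delicate cases are the two that survive: for $\frg_2^0$ with $d\omega^j=\pm i\,\omega^j\wedge(\omega^3+\omega^{\bar3})$, since $A+\bar A=0$, the obstructing term disappears and a direct computation should give $\partial\db F\equiv 0$ for \emph{every} $F$; for $\frg_4$, where $d\omega^3=\pm\omega^{1\bar1}$ adds one more term, one checks that $\partial\db F$ reduces to a single equation among the parameters that is solvable within the positivity cone (e.g. a condition like $r^2=t^2$ together with $\Real u=0$), exhibiting an explicit SKT metric. The main obstacle is purely computational bookkeeping: correctly expanding $\partial\db F$ for roughly a dozen sets of structure equations and keeping track of which coefficient combinations are sign-definite; there is no conceptual difficulty once the classification of Section~\ref{sec-reduc} is in hand.
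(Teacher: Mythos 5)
Your strategy is essentially the paper's: reduce to the Lie algebra level by symmetrization, run through the classified pairs $(\frg,J)$ of Theorem~\ref{solvmanifolds-complex-classification} with the generic invariant metric \eqref{2form}, compute $\partial\db F$, and decide solvability under the positivity constraints; this works and yields exactly the statement. Two of your predicted outcomes, however, are wrong and should be corrected before you write the computation up. On $\frg_2^0$ it is \emph{not} true that $\partial\db F\equiv 0$ for every $F$: with $A=i$ in \eqref{equations2-1} one gets $\partial\db F=2u\,\omega^{13\bar2\bar3}-2\bar u\,\omega^{23\bar1\bar3}$, so the SKT condition is $u=0$ (only then does one get SKT, indeed even K\"ahler, metrics). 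Likewise on $\frg_4$ the condition is again simply $u=0$, not anything like $r^2=t^2$ with $\Real u=0$. Neither error affects existence, but the precise conditions matter later (cf.\ Remark~\ref{new-SKT-example}). Two further practical points: for the algebras where no SKT metric exists with sign-definite obstructions ($\frg_3$, $\frg_8$, $\frg_9$) the paper avoids tracking all six coefficients of the $(2,2)$-form by computing the single top-degree form $\partial\db F\wedge F$ and showing its coefficient is a sum of manifestly positive terms (e.g.\ $\frac{1+4x^2}{16x^2}(4x^2s^4+t^4)$ for $\frg_3$), which you may want to adopt; and for $\frg_8$ your list omits the family \eqref{equations2-3-1} --- it is cleaner to work with the pre-normalized equations \eqref{equations_nakamura}, since $\partial\db F\wedge F$ there is independent of $B,C$ and so one computation disposes of all three reduced families at once.
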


\begin{proof}
Let $F$ be a $J$-Hermitian metric given by \eqref{2form}.
%%%--- SKT $\frg_1$, $\frg_2^{\alpha}$
We study first the existence of SKT metrics on
$\frg_1$ and $\frg_2^{\alpha}$.
The equations \eqref{equations2-1} parametrize all the complex structures $J$ on $\frg_1$ or $\frg_2^{\alpha}$,
from which we get
\begin{equation}\label{ddbar-g1-g2}
\begin{array}{rl}
\partial\db F \!\!\!&= - 2ir^2(\Real A)^2 \omega^{13\bar{1}\bar{3}} + 2 u (\Imag A)^2 \omega^{13\bar{2}\bar{3}}
\\[7pt]
&
\phantom{=}
-2 \bar{u} (\Imag A)^2 \omega^{23\bar{1}\bar{3}} - 2 is^2(\Real A)^2 \omega^{23\bar{2}\bar{3}}.
\end{array}
\end{equation}
Thus, $\partial\db F=0$ implies $\Real A=0$, and so necessarily $A=i$. In this case $F$ is SKT
if and only if $u=0$. By Proposition~\ref{complex-moduli-g1-g2} the corresponding Lie algebra is $\frg_2^{0}$.

%%%--- SKT $\frg_3$

For the Lie algebra $\frg_3$, by Proposition~\ref{complex-moduli-g3} any $J$ on $\frg_3$ is equivalent to one
complex structure $J_x$
given by \eqref{reduced_equations_e2+e11}. The (3,3)-form $\partial\db F \wedge F$
is given by
\begin{equation}\label{ddbar-g3}
\begin{array}{rl}
\partial\db F \wedge F \!\!\!&= \frac{1+4x^2}{16x^2}\left( 4x^2s^4+t^4 \right) \omega^{123\bar{1}\bar{2}\bar{3}}.
\end{array}
\end{equation}
Since this form is never zero, there is no SKT metric on $\frg_3$.

%%%--- SKT $\frg_4$, $\frg_5$, $\frg_6$, $\frg_7$

For the Lie algebras $\frg_k$ $(4\leq k\leq 7)$, using the equations \eqref{equations2-2},
which parametrize all the complex structures $J$ on $\frg_k$,
we get
\begin{equation}\label{ddbar-g4-g5-g6-g7}
\begin{array}{rl}
\partial\db F \!\!\!&= it^2(G_{11}G_{22}-|G_{12}|^2)\omega^{12\bar{1}\bar{2}} - 2ir^2(\Real A)^2 \omega^{13\bar{1}\bar{3}}
- 2 is^2(\Real A)^2 \omega^{23\bar{2}\bar{3}}
\\[7pt]
&
\phantom{=}
+2 u (\Imag A)^2 \omega^{13\bar{2}\bar{3}}-2 \bar{u} (\Imag A)^2 \omega^{23\bar{1}\bar{3}}.
\end{array}
\end{equation}
Thus, $\partial\db F=0$ implies $\Real A=0$, and from the conditions \eqref{equations2-2-cond} we have $G_{12}=0$.
Now, $\partial\db F=0$ also implies $G_{11}G_{22}=0$, and
from Proposition~\ref{complex-moduli-g4-g5-g6-g7} it follows that only $\frg_4$ admits SKT structures:
in fact, a generic $F$ given by \eqref{2form} is SKT if and only if $u=0$.

%%%--- SKT $\frg_8$

For the study of SKT metrics on $\frg_8$, instead of using the complex structure equations
\eqref{equations2-3-1}, \eqref{equations2-3-2} and \eqref{equations2-3-3}, we use the
equations \eqref{equations_nakamura} obtained in the proof
of Proposition~\ref{complex-moduli-g8}. A direct calculation shows that
\begin{equation}\label{ddbar-g8}
\partial\db F \wedge F=2\left( r^2s^2(1+\mathfrak{Re}(A)^2) + |u|^2\mathfrak{Im}(A)^2 \right) \omega^{123\bar{1}\bar{2}\bar{3}},
\end{equation}
in particular, this form does not depend on the complex coefficients $B,C$ in~\eqref{equations_nakamura}.
The form $\partial\db F \wedge F$ never vanishes, so there is no SKT metric on $\frg_8$.

%%%--- SKT $\frg_9$

Finally, for the Lie algebra $\frg_9$,
from the complex equations \eqref{reduced_equations_B64} in Proposition~\ref{complex-moduli-g9} it follows
\begin{equation}\label{ddbar-g9}
\begin{array}{rl}
\partial\db F \wedge F \!\!\!&= \left( |v|^2+\frac{s^4}{8} \right) \omega^{123\bar{1}\bar{2}\bar{3}} \not= 0,
\end{array}
\end{equation}
so the Lie algebra $\frg_9$ does not admit SKT metrics.
\end{proof}

%%%%%%%%%% Remark SKT

\begin{remark}\label{new-SKT-example}
{\rm
In the previous theorem we have proved that any complex structure with non-zero closed (3,0)-form on $\frg_2^{0}$
or $\frg_4$ admits SKT metrics.
Moreover, a generic metric $F$ given by \eqref{2form}
satisfies the SKT condition with respect to the complex equations \eqref{equations2-1-g2-0} for $(\frg_2^{0},J)$,
or \eqref{equations2-2-g4} for $(\frg_4,J_{\pm})$, if and only if $u=0$, so in both cases the SKT metrics are given by
$$
2\,F=i\,(r^2\omega^{1\bar1}+s^2\omega^{2\bar2}+t^2\omega^{3\bar3})+v\omega^{2\bar3}-\bar
v\omega^{3\bar2}+z\omega^{1\bar3}-\bar
z\omega^{3\bar1},
$$
where the coefficients $r^2,\,s^2,\,t^2$ are non-zero real numbers
and $v,\, z\in\C$ satisfy $r^2s^2t^2 > r^2|v|^2 + s^2|z|^2$.

Whereas it is known that the Lie algebra $\frg_2^{0}$ admits SKT metrics (actually it admits Calabi-Yau metrics, for instance
any $F=\frac{i}{2}(r^2\omega^{1\bar1}+s^2\omega^{2\bar2}+t^2\omega^{3\bar3})$ is K\"ahler),
%%%In fact, for $A=i$ we get
%%%$$
%%%\begin{array}{rl}
%%%2\, dF \!\!\!&= 2iu\omega^{13\bar{2}}+iz\omega^{13\bar{3}}+2i\bar{u}\omega^{23\bar{1}}-iv\omega^{23\bar{3}}\\[7pt]
%%%&\phantom{=}
%%%-2iu\omega^{1\bar{2}\bar{3}}-2i\bar{u}\omega^{2\bar{1}\bar{3}}-i\bar{z}\omega^{3\bar{1}\bar{3}}+i\bar{v}\omega^{3\bar{2}\bar{3}},
%%%\end{array}
%%%$$
%%%so any metric $F=\frac{i}{2}(r^2\omega^{1\bar1}+s^2\omega^{2\bar2}+t^2\omega^{3\bar3})$ is K\"ahler.
however a solvmanifold based on $\frg_4$ provides, as far as we know, a new example of 6-dimensional compact SKT manifold.

We recall that  a  complex structure $J$ on  a symplectic manifold $(M,\omega)$ is said
to be tamed by the symplectic form $\omega$ if
$\omega(X,JX)>0$
for any non-zero vector field $X$ on $M$.
The pair $(\omega, J)$ is also called a \emph{Hermitian-symplectic structure} in~\cite{ST}.
By \cite[Proposition 2.1]{EFV} the existence of a Hermitian-symplectic structure
on a complex manifold  $(M, J)$ is equivalent to the existence of a $J$-compatible
SKT metric $g$ whose fundamental form $F$ satisfies
$\partial F =\overline \partial\beta$ for some $\partial$-closed $(2,0)$-form $\beta$.
As a consequence of Theorem~\ref{skt-geometry}
we have that a $6$-dimensional solvmanifold $(M=\Gamma\backslash G,J)$ with $J$ invariant and
holomorphically trivial canonical bundle, has a symplectic form $\omega$ taming $J$
if and only if $\frak g \cong \frg_2^{0}$ and $(J, \omega)$ is a K\"ahler structure.
By \cite{EFV} if $(\Gamma\backslash G, J)$ admits a non-invariant symplectic form taming $J$,
then there exists an invariant one. So we can immediately exclude
the solvmanifolds $\Gamma\backslash G$ with $\frak g\cong \frak g_4$ since $\frak g_4$
does not admit any symplectic form. For the solvmanifolds $\Gamma\backslash G$ with $\frak g \cong \frak \frg_2^{0}$
by a direct computation we have that $\partial F =\overline \partial\beta$,
for some $\partial$-closed $(2,0)$-form $\beta$, if and  only if $dF=0$.
}
\end{remark}

%%%%%%  fin REMARK SKT

In the following result we study the existence of 1-st Gauduchon metrics.

\begin{theorem}\label{gamma1-geometry}
Let $(M=\Gamma\backslash G,J)$ be a $6$-dimensional solvmanifold endowed with
an invariant complex structure $J$ with
holomorphically trivial canonical bundle, and let $F$ be an invariant $J$-Hermitian metric on $M$.
If $\frg$ denotes the Lie algebra of~$G$, then we have:
\begin{enumerate}
\item[{\rm (i)}] If $\frg\cong \frg_1,\frg_2^{\alpha} (\alpha>0),\frg_3,\frg_5,\frg_7,\frg_8$ or $\frg_9$,
then $\gamma_1>0$ for any $(J,F)$.
\item[{\rm (ii)}] If $\frg\cong \frg_2^0$ or $\frg_4$,
then $\gamma_1\geq0$ for any $(J,F)$; moreover, an invariant Hermitian metric is 1-st Gauduchon if and only if it is SKT.
\item[{\rm (iii)}] If $\frg\cong \frg_6$ then there exist invariant Hermitian metrics such that $\gamma_1>0$, $=0$ or $<0$;
in particular, there are invariant 1-st Gauduchon metrics which are not SKT.
\end{enumerate}
\end{theorem}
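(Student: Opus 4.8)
The plan is to repeat the case-by-case analysis used for Theorem~\ref{skt-geometry}: for each of $\frg_1,\dots,\frg_9$, with the normalized complex structures of Propositions~\ref{complex-moduli-g1-g2}--\ref{complex-moduli-g9}, take a generic invariant $J$-Hermitian metric $F$ as in~\eqref{2form} and compute the $(3,3)$-form $\partial\db F\wedge F=\mu(F)\,\omega^{123\bar{1}\bar{2}\bar{3}}$, where $\mu(F)\in\R$ since $i\,\partial\db F$ and $i\,\omega^{123\bar{1}\bar{2}\bar{3}}$ are real. The first point is that for invariant $F$ the sign of $\gamma_1(F)$ is detected by $\mu(F)$: because $\partial\db F\wedge F$ and $F^3$ are constant-coefficient forms, in the defining relation $\frac{i}{2}\partial\db(e^v F)\wedge F=\gamma_1(F)\,e^v F^3$ of~\cite{FWW} one may take $v$ constant, so that $\gamma_1(F)$ is a positive multiple of $\mu(F)$ (here $F^3$ is a positive multiple of $i\,\omega^{123\bar{1}\bar{2}\bar{3}}$), and $F$ is $1$-st Gauduchon exactly when $\mu(F)=0$. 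Equivalence of complex structures is harmless, since a Lie-algebra automorphism intertwining two complex structures carries $1$-st Gauduchon, resp. SKT, metrics to metrics of the same type. Thus the whole statement reduces to reading off signs of $\mu(F)$.

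For $\frg_3$, $\frg_8$ and $\frg_9$ the form $\partial\db F\wedge F$ has already been computed in the proof of Theorem~\ref{skt-geometry}, see~\eqref{ddbar-g3}, \eqref{ddbar-g8} and~\eqref{ddbar-g9}; in each case $\mu(F)$ is manifestly strictly positive (a positive factor times a positive power of a non-zero parameter among $r^2,s^2,t^2$, plus non-negative terms), so $\gamma_1>0$ for every invariant $(J,F)$, which gives part~(i) for these three algebras.

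For $\frg_1$, $\frg_2^{\alpha}$ and $\frg_k$ with $4\le k\le7$ I would wedge the expressions~\eqref{ddbar-g1-g2} and~\eqref{ddbar-g4-g5-g6-g7} for $\partial\db F$ with a generic $F$ from~\eqref{2form}, keeping track of the permutation signs that arise when the six basic $(1,0)$- and $(0,1)$-forms are reordered into $\omega^{123\bar{1}\bar{2}\bar{3}}$. This yields the single formula
\[
\partial\db F\wedge F=\tfrac{1}{2}\big(4(\Real A)^2\,r^2 s^2+4(\Imag A)^2\,|u|^2-(G_{11}G_{22}-|G_{12}|^2)\,t^4\big)\,\omega^{123\bar{1}\bar{2}\bar{3}},
\]
where for $\frg_1$ and $\frg_2^{\alpha}$ one sets $G_{11}=G_{12}=G_{22}=0$. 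Substituting the normalized complex structures then gives: for $\frg_1$ ($A=1$) and $\frg_2^{\alpha}$ with $\alpha>0$ (where $\Real A\ne0$), and for $\frg_5$ ($A=1$, $G_{11}G_{22}-|G_{12}|^2=-1$) and $\frg_7$ ($A=i$, $G_{11}G_{22}-|G_{12}|^2=-1$), the bracket is a sum of squares of non-zero quantities, so $\gamma_1>0$, completing part~(i); for $\frg_2^0$ and $\frg_4$ ($A=i$, $G_{11}G_{22}-|G_{12}|^2=0$) the bracket equals $4|u|^2\ge0$, so $\gamma_1\ge0$ and $\mu(F)=0$ if and only if $u=0$, which by Theorem~\ref{skt-geometry} is exactly the SKT condition, giving part~(ii); and for $\frg_6$ ($A=i$, $G_{11}G_{22}-|G_{12}|^2=1$) the bracket equals $4|u|^2-t^4$, which is positive, zero or negative as $|u|$ and $t$ range over admissible metrics (one can always enlarge $r^2,s^2$ to preserve the positivity conditions of~\eqref{2form}), so $\gamma_1$ takes all three signs, and the metrics with $4|u|^2=t^4$ are $1$-st Gauduchon but not SKT since $\frg_6$ carries no SKT metric by Theorem~\ref{skt-geometry}, which is part~(iii).

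The only genuine computation here is the displayed formula, i.e. wedging the explicit $(2,2)$-form $\partial\db F$ of~\eqref{ddbar-g4-g5-g6-g7} with the generic Hermitian form~\eqref{2form} and handling the reordering signs; once it is in place, assertions (i)--(iii) follow by the elementary substitutions above, together with the SKT classification already obtained in Theorem~\ref{skt-geometry}.
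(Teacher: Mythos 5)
Your proposal is correct and follows essentially the same route as the paper: reduce the sign of $\gamma_1(F)$ to the sign of the coefficient $\mu$ in $\partial\db F\wedge F=\mu\,\omega^{123\bar1\bar2\bar3}$ (the paper does this via $F^3=-\frac34\det(F)\,\omega^{123\bar1\bar2\bar3}$ with $i\det(F)>0$, which is the same observation as your constant-$v$ argument), then reuse the expressions for $\partial\db F$ computed in the SKT theorem and run the case-by-case substitution of the classified structures. Your wedged formula $\tfrac12\bigl(4(\Real A)^2r^2s^2+4(\Imag A)^2|u|^2-(G_{11}G_{22}-|G_{12}|^2)t^4\bigr)$ coincides with the paper's, and the sign analysis for each Lie algebra, including the $\frg_6$ case $4|u|^2-t^4$ and the identification of the vanishing locus with the SKT condition for $\frg_2^0$ and $\frg_4$, matches the paper's proof.
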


\begin{proof}
Let $F$ be an invariant $J$-Hermitian metric given by \eqref{2form}.
Then, $F^3=-\frac34 \det(F)\, \omega^{123\bar{1}\bar{2}\bar{3}}$, where
$$
\det(F)=
\left|\!\!\! \begin{array}{ccc}
i\,r^2 & u & z \\
-\overline{u} & i\,s^2 & v \\
-\overline{z} & -\overline{v} & i\,t^2
\end{array} \!\right|.
$$
Notice that $i \det(F)>0$. Now, if
$$
\partial\db F \wedge F = \mu\, \omega^{123\bar{1}\bar{2}\bar{3}}
$$
then $\frac{i}{2}\partial\db F \wedge F=\frac{2\mu}{3i\det(F)} F^3$, which implies that
$$
\gamma_1(F)>0, =0 \mbox{ or} <0 \ \ \mbox{ if and only if }\ \  \mu>0, =0 \mbox{ or} <0.
$$
In what follows we will compute $\mu$ for any triple $(\frg,J,F)$, and study its possible signs.

For the Lie algebras $\frg_1$ and $\frg_2^{\alpha}$,
from \eqref{ddbar-g1-g2} it follows
$$
\begin{array}{rl}
\partial\db F \wedge F \!\!\!&= 2\left( r^2s^2(\Real A)^2+|u|^2(\Imag A)^2 \right) \omega^{123\bar{1}\bar{2}\bar{3}}.
\end{array}
$$
Therefore, $\gamma_1(F)\geq 0$ for any $F$. Moreover, $\gamma_1(F)\geq 0$ if and only if $\Real A=0$ and $u=0$,
which corresponds precisely to SKT metric on $\frg_2^0$.

From \eqref{ddbar-g3}, \eqref{ddbar-g8} and \eqref{ddbar-g9} it follows that $\gamma_1>0$ for any $(J,F)$ on
$\frg_3$, $\frg_8$ and $\frg_9$.

For the Lie algebras $\frg_k$ $(4\leq k\leq 7)$, using \eqref{ddbar-g4-g5-g6-g7} we get
$$
\begin{array}{rl}
2\partial\db F \wedge F \!\!\!&= \left[ 4r^2s^2(\Real A)^2+4|u|^2(\Imag A)^2-t^4(G_{11}G_{22}-|G_{12}|^2) \right]
\omega^{123\bar{1}\bar{2}\bar{3}}.
\end{array}
$$

Let us consider first $\frg_4$. By \eqref{equations2-2-g4}
%%%Proposition~\ref{complex-moduli-g4-g5-g6-g7}
we can take $A=i$, $G_{11}=\pm 1$ and $G_{12}=G_{22}=0$,
so $2\partial\db F \wedge F= 4|u|^2 \omega^{123\bar{1}\bar{2}\bar{3}}$.
This implies that $\gamma_1\geq0$, and it is equal to zero if and only if the structure is SKT.
This completes the proof of (i).

For the Lie algebra $\frg_5$, by \eqref{equations2-2-g5}
%%%Proposition~\ref{complex-moduli-g4-g5-g6-g7}
we have that $A=G_{12}=1$ and $G_{11}=G_{22}=0$,
so $2\partial\db F \wedge F= (4r^2s^2+t^4) \omega^{123\bar{1}\bar{2}\bar{3}}$
and $\gamma_1>0$.

Similarly, using \eqref{equations2-2-g7},
%%%Proposition~\ref{complex-moduli-g4-g5-g6-g7}
for $\frg_7$ we can take $A=i$, $G_{12}=0$ and
$(G_{11},G_{22})=(-1,1)$ or $(1,-1)$. Therefore, $2\partial\db F \wedge F= (t^4+4|u|^2) \omega^{123\bar{1}\bar{2}\bar{3}}$
and thus $\gamma_1>0$.
This completes the proof of (ii).

Finally, to prove {\rm (iii)}, by \eqref{equations2-2-g6}
%%%Proposition~\ref{complex-moduli-g4-g5-g6-g7}
we consider $A=i$, $G_{12}=0$ and
$G_{11}=G_{22}=1$. Since $2\partial\db F \wedge F= (4|u|^2-t^4) \omega^{123\bar{1}\bar{2}\bar{3}}$,
we conclude that on $\frg_6$ there exist Hermitian metrics such that $\gamma_1>0$, $=0$ or $<0$,
depending on the sign of $4|u|^2-t^4$.
%%%Notice that any metric $F$ with $|u|=\frac{t^2}{2}$ is 1-st Gauduchon.
\end{proof}

%%%%%%%%%% REMARK gamma1

\begin{remark}\label{new-gamma1-example}
{\rm
Notice that the symmetrization process cannot be applied to the 1-st Gauduchon condition on
the solvmanifold $M=\Gamma\backslash G$ in order to reduce
the problem to the Lie algebra level, so the previous theorem studies only the existence of
\emph{invariant} 1-st Gauduchon metrics.

On the other hand, it is worthy to remark that on the solvmanifold $M=\Gamma\backslash G$ with Lie algebra $\frg\cong\frg_6$
there exist invariant 1-st Gauduchon metrics, although $M$ does not admit any SKT metric.
In fact, with respect to the complex equations \eqref{equations2-2-g6},
any invariant Hermitian metric $F$ given by \eqref{2form}
with $|u|=\frac{t^2}{2}$ is 1-st Gauduchon, however there is no SKT metric by Theorem~\ref{skt-geometry}.
This is in deep contrast with the nilpotent case, because any invariant 1-st Gauduchon metric on a 6-nilmanifold
is necessarily SKT (see \cite[Proposition 3.3]{FU}).
}
\end{remark}
%%%%%%  fin REMARK gamma1

In the following result we study the existence of balanced Hermitian metrics. In particular,
new examples of balanced solvmanifolds are found.

\begin{theorem}\label{balanced-geometry}
Let $(M=\Gamma\backslash G,J)$ be a $6$-dimensional solvmanifold endowed with
an invariant complex structure $J$ with
holomorphically trivial canonical bundle, and
denote by $\frg$ the Lie algebra of~$G$.
If $(M,J)$ has a balanced metric then $\frg\cong \frg_1$, $\frg_2^{\alpha}$, %%%$(\alpha\geq0)$,
$\frg_3$, $\frg_5$, $\frg_7$ or $\frg_8$.

Moreover, in such cases, any $J$ admits balanced metrics except for the complex structures which are isomorphic to
\eqref{equations2-3-1} or \eqref{equations2-3-2} on $\frg_8$.
\end{theorem}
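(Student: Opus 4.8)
The plan is to proceed complex structure by complex structure, using the classification in Theorem~\ref{solvmanifolds-complex-classification} together with the generic Hermitian metric \eqref{2form}. For a $6$-dimensional Hermitian manifold the balanced condition $dF^2=0$ is equivalent to $dF$ having no component in the primitive part; concretely, writing $2F$ as in \eqref{2form}, one computes $F^2$ (up to a non-zero scalar this is the $(2,2)$-form whose coefficients are the cofactors of the Hermitian matrix appearing in \eqref{2form}) and imposes $dF^2=0$. Since all structure equations are explicit, this reduces in each case to a linear (or at worst mildly nonlinear) system in the parameters $r^2,s^2,t^2,u,v,z$, subject to the positivity constraints listed after \eqref{2form}.

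First I would dispose of the Lie algebras that admit \emph{no} balanced metric: $\frg_4$, $\frg_6$ and $\frg_9$. For these the expected mechanism is that $dF^2$ contains a term which is a fixed positive combination of the metric coefficients and hence can never vanish — exactly the phenomenon already visible in \eqref{ddbar-g3}, \eqref{ddbar-g8}, \eqref{ddbar-g9} for the analogous $\partial\db F\wedge F$ computations. In fact for $\frg_9$ one can likely reuse \eqref{reduced_equations_B64}; for $\frg_4$ and $\frg_6$ one uses \eqref{equations2-2-g4} and \eqref{equations2-2-g6}. I expect a clean identity of the shape $dF^2 = (\text{positive definite expression})\,\omega^{123\bar1\bar2\bar3}$-type obstruction (possibly after wedging with the appropriate $1$- or $2$-form), so no metric is balanced. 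This handles the ``only if'' direction of the first assertion, since $\frg_1,\frg_2^\alpha,\frg_3,\frg_5,\frg_7,\frg_8$ are all that remain.

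Next I would exhibit balanced metrics on the surviving algebras. For $\frg_1$ and $\frg_2^\alpha$, using \eqref{equations2-1} one checks that the diagonal metric $2F=i(r^2\omega^{1\bar1}+s^2\omega^{2\bar2}+t^2\omega^{3\bar3})$ already has $dF^2=0$ because the $\pm A$-symmetry in $d\omega^1,d\omega^2$ makes the two relevant terms cancel in $F^2$; indeed $\frg_1$ carries Kähler metrics (Remark~\ref{new-SKT-example}). For $\frg_3$ one works from \eqref{reduced_equations_e2+e11} and solves $dF^2=0$: the $\omega^{12}$ and $\omega^{13}$ terms in $d\omega^2,d\omega^3$ contribute, and I expect a nonempty open set of $(r^2,s^2,t^2,u,v,z)$ to work (a natural guess is to allow the off-diagonal coefficient pairing $\omega^2$ with $\omega^3$, i.e.\ $v\neq0$, to absorb the closed $\omega^1$ directions). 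For $\frg_5$ and $\frg_7$ one uses \eqref{equations2-2-g5}, \eqref{equations2-2-g7}; since $\Real A=0$ is available (for $\frg_7$) or $G_{11}=G_{22}=0$ (for $\frg_5$), the obstructing terms in $dF^2$ drop and, e.g., a diagonal or nearly-diagonal metric is balanced. For $\frg_8$ I would use the reduced forms \eqref{equations2-3-1}--\eqref{equations2-3-3}: on the family $J_A$ (with $\Imag A\neq0$) one solves $dF^2=0$ and finds balanced metrics for every $A$ — this is the case underlying the deformation example of Section~\ref{hol-deform} and of Nakamura manifold ($A=-i$), so I expect an explicit one-parameter family of solutions. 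By contrast, for the two sporadic structures $J$ and $J'$ of \eqref{equations2-3-1}, \eqref{equations2-3-2} the extra $\omega^{3\bar3}$-term in $d\omega^1$ (and $d\omega^2$) forces a nonzero primitive component in $dF^2$, ruling out balanced metrics; this is the assertion ``except for \eqref{equations2-3-1} or \eqref{equations2-3-2}''.

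The main obstacle is the bookkeeping for $\frg_3$ and for the $J_A$ family on $\frg_8$: unlike the diagonal-metric cases, here one genuinely has to solve $dF^2=0$ in the full six (real) parameters and then verify that the solution set meets the positivity region cut out by $r^2s^2>|u|^2$, $s^2t^2>|v|^2$, $r^2t^2>|z|^2$ and the determinant inequality after \eqref{2form}. I would organize this by first computing $dF^2$ symbolically, reading off the vanishing conditions as equations among $u,v,z$ and the $r^2,s^2,t^2$, solving them, and finally exhibiting one explicit numerically positive representative in each family to certify non-emptiness; the remaining cases are then routine verifications of the same type.
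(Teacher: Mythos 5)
Your plan is essentially the paper's proof: for each complex structure in Theorem~\ref{solvmanifolds-complex-classification} one computes $\partial F^2$ (equivalently $dF^2$) for the generic invariant metric \eqref{2form}, and the positivity constraints $r^2s^2>|u|^2$, $s^2t^2>|v|^2$, $r^2t^2>|z|^2$ produce exactly the obstructions you anticipate for $\frg_4$, $\frg_6$ and $\frg_9$ (e.g.\ for $\frg_9$ the $\omega^{123\bar{2}\bar{3}}$-coefficient is $2(|u|^2-r^2s^2)\neq 0$), while on the surviving algebras the vanishing conditions are solvable: $v=z=0$ on $\frg_1,\frg_2^{\alpha}$; $u=z=0$ on $\frg_3$; $v=z=0$, $u\in\R$ on $\frg_5$; $v=z=0$, $r^2=s^2$ on $\frg_7$; and on $\frg_8$ every $J_A$ of \eqref{equations2-3-3} admits balanced metrics ($v=z=0$ if $A\neq -i$, any metric if $A=-i$), whereas for \eqref{equations2-3-1} and \eqref{equations2-3-2} the coefficient of $\omega^{123\bar{2}\bar{3}}$ in $2\,\partial F^2$ equals $r^2s^2-|u|^2>0$, as your heuristic about the extra $\omega^{3\bar{3}}$-term predicts. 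The only organizational difference is that the paper treats $\frg_4,\ldots,\frg_7$ uniformly through the family \eqref{equations2-2} and $\frg_8$ through the intermediate equations \eqref{equations_nakamura} rather than the reduced normal forms; this is pure bookkeeping.

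Two points need attention. First, you work exclusively with invariant metrics, but the statement concerns arbitrary balanced metrics; for the non-existence conclusions ($\frg_4,\frg_6,\frg_9$ and the exceptional structures on $\frg_8$) you must invoke the symmetrization process (recalled at the start of Section~\ref{sec-metrics}, cf.\ \cite{COUV,FG}), which turns any balanced metric on $M$ into an invariant one, so that ruling out metrics of the form \eqref{2form} suffices. Second, your parenthetical that $\frg_1$ carries K\"ahler metrics misreads Remark~\ref{new-SKT-example}: the K\"ahler (Calabi--Yau) metrics there are on $\frg_2^0$, and $\frg_1$ admits no SKT, hence no K\"ahler, metric by Theorem~\ref{skt-geometry}; this does not affect your argument, since the diagonal metric on $\frg_1$ is balanced by the direct computation. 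Finally, note that on $\frg_7$ a diagonal metric is balanced only when $r^2=s^2$, a constraint your sketch glosses over but which the computation forces.
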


\begin{proof}
Since a $J$-Hermitian metric $F$ given by \eqref{2form} is balanced if and only if $\partial F^2=0$,
next we compute the (3,2)-form $\partial F^2$ for each Lie algebra $\frg$.

%%%--- balanced $\frg_1$, $\frg_2^{\alpha}$
For the existence of balanced metrics on
$\frg_1$ and $\frg_2^{\alpha}$, from the complex structure equations
\eqref{equations2-1} it follows
\begin{equation}\label{partialF2-g1-g2}
\begin{array}{rl}
2\, \partial F^2 \!\!\!&= (ir^2z+\bar{u}v)\bar{A}\, \omega^{123\bar{1}\bar{3}}
+(is^2v -uz)\bar{A}\, \omega^{123\bar{2}\bar{3}}.
\end{array}
\end{equation}
Since $A$ is non-zero, this form vanishes if and only if $is^2v -uz=0$ and
$ir^2z+\bar{u}v=0$. Now, $r^2s^2-|u|^2>0$ implies that these conditions are equivalent to $v=z=0$.

%%%--- balanced $\frg_3$

For the Lie algebra $\frg_3$, a direct calculation using the complex equations \eqref{reduced_equations_e2+e11} shows
\begin{equation}\label{partialF2-g3}
\begin{array}{rl}
2\, \partial F^2  =\!\!\!&
-\frac{1}{2x} \left( t^2\,\Real u+\Imag(\bar{v}z)-x(it^2u+\bar{v}z) \right) \omega^{123\bar{1}\bar{2}}
\\[6pt]
\!\!\!&
+2x \left( s^2\,\Real z-\Imag(uv)+\frac{is^2z-uv}{4x} \right) \omega^{123\bar{1}\bar{3}}.
\end{array}
\end{equation}
Thus, the form $F^2$ is closed if and only if
$$
\left\{
\begin{array}{l}
it^2u+\bar{v}z=(t^2\Real u+\Imag(\bar{v}z))/x,\\[5pt]
is^2z-uv=-4x(s^2\Real z-\Imag(uv)).
\end{array}
\right.
$$
Notice that since $x$ is real, we have that both $it^2u+\bar{v}z$ and $is^2z-uv$ are also real numbers.
But this implies that $t^2\Real u+\Imag(\bar{v}z)=0$ and $s^2\Real z-\Imag(uv)=0$, and so the system
above is homogeneous. Finally, since $s^2t^2-|v|^2>0$ necessarily $u=z=0$.

%%%--- balanced $\frg_4$, $\frg_5$, $\frg_6$, $\frg_7$

For the Lie algebras $\frg_k$ $(4\leq k\leq 7)$, from equations \eqref{equations2-2} we have
\begin{equation}\label{partialF2-g4-g5-g6-g7}
\begin{array}{rl}
2\, \partial F^2 \!\!\!&= \left[(s^2t^2-|v|^2)G_{11}+(r^2t^2-|z|^2)G_{22} \right.\\[3pt]
\!\!\!&\phantom{iiia}\left.+(v\bar{z}-it^2\bar{u})G_{12}+(\bar{v}z+it^2u)\overline{G}_{12}\, \right] \omega^{123\bar{1}\bar{2}}\\[5pt]
\!\!\!&\phantom{ii}+(ir^2v+\bar{u}z)\overline{A}\, \omega^{123\bar{1}\bar{3}}
+(is^2z-uv)\overline{A}\, \omega^{123\bar{2}\bar{3}}.
\end{array}
\end{equation}
Since $A$ is non-zero and $r^2s^2-|u|^2>0$, the coefficients of $\omega^{123\bar{1}\bar{3}}$
and $\omega^{123\bar{2}\bar{3}}$ vanish if and only if $v=z=0$. The latter conditions
reduce the expression of the form to
$$
2\, \partial F^2 = t^2 \left( s^2\,G_{11}+r^2\,G_{22}-i\bar{u}\,G_{12}+iu\,\overline{G}_{12} \right)\, \omega^{123\bar{1}\bar{2}}.
$$
Now, we can use the complex classification given in Proposition~\ref{complex-moduli-g4-g5-g6-g7}
to conclude that the only possibilities to get a closed form $F^2$ are, either $G_{12}=0$ and
$(G_{11},G_{22})=(1,-1),(-1,1)$,
or $G_{11}=G_{22}=0$ and $G_{12}=1$. The first case corresponds to $\frg_7$ and the
coefficients $r^2$ and $s^2$ in the metric must be equal,
whereas the second case corresponds to $\frg_5$ with metric coefficient $u\in \mathbb{R}$.

%%%--- balanced $\frg_8$

For the study of balanced Hermitian metrics on $\frg_8$, by the complex
equations \eqref{equations_nakamura}, a direct calculation shows that
\begin{equation}\label{partialF2-g8}
\begin{array}{rl}
2\,\partial F^2= & - \left[ (ir^2v+\bar{u}z)(\overline{A}-i) + (r^2s^2-|u|^2) \overline{C} \right] \omega^{123\bar{1}\bar{3}}\\[7pt]
& + \left[ (uv-is^2z)(\overline{A}-i) + (r^2s^2-|u|^2) \overline{B} \right] \omega^{123\bar{2}\bar{3}}.
\end{array}
\end{equation}
Since $r^2s^2-|u|^2\not=0$, the structure $(J,F)$ is balanced if and only if
$$
%%%\begin{equation}\label{bal}
\begin{array}{l}
B=-\frac{is^2\bar{z}+\bar{u}\bar{v}}{r^2s^2-|u|^2} (A+i),\quad\quad
C=\frac{ir^2\bar{v}-u\bar{z}}{r^2s^2-|u|^2} (A+i).
\end{array}
%%%\end{equation}
$$
It follows from Proposition~\ref{complex-moduli-g8} that the complex structures \eqref{equations2-3-1} and \eqref{equations2-3-2}
do not admit balanced metrics, because $A=-i$ but $B$ is not zero.
However, any complex structure in the family \eqref{equations2-3-3} has balanced Hermitian metrics because $B=C=0$.
In fact, if $A\neq-i$ then
the metric \eqref{2form} is balanced if and only if $v=z=0$, and for $A=-i$ (i.e. the complex structure is bi-invariant)
any metric is balanced.

%%%--- balanced $\frg_9$

In the case of the Lie algebra $\frg_9$,
from the complex equations \eqref{reduced_equations_B64} it follows
\begin{equation}\label{partialF2-g9}
\begin{array}{rl}
4\, \partial F^2 =\!\!\!& \left( i\,\bar{u}\bar{v}-s^2\bar{z} \right) \omega^{123\bar{1}\bar{2}}
- \left( i\,v\bar{z} +t^2\bar{u}-uv+i\,s^2z \right) \omega^{123\bar{1}\bar{3}}\\[6pt]
\!\!\!&+2(|u|^2-r^2s^2)\omega^{123\bar{2}\bar{3}},
\end{array}
\end{equation}
which implies that the component of $\partial F^2$ in $\omega^{123\bar{2}\bar{3}}$ is nonzero,
so there are not balanced Hermitian metrics.

Finally, notice that for the Lie algebras $\frg_1$, $\frg_2^{\alpha}$,
$\frg_3$, $\frg_5$ and $\frg_7$ we have proved above that any complex structure $J$ admits balanced Hermitian metrics.
However, for the Lie algebras $\frg_8$, a complex structure $J$ admits balanced metric if and only if
it is isomorphic to one in the family \eqref{equations2-3-3}.
\end{proof}

%%%Next we prove that the solvmanifolds admitting an sG metric are the same as those admitting a balanced metric,
%%%although any complex structure on such solvmanifolds admits sG metrics.

Next we prove that the non-sufficient necessary condition for the existence of balanced metrics found in
Theorem~\ref{balanced-geometry} is necessary and sufficient for the existence of sG metrics.

\begin{theorem}\label{sG-geometry}
Let $(M=\Gamma\backslash G,J)$ be a $6$-dimensional solvmanifold endowed with
an invariant complex structure $J$ with
holomorphically trivial canonical bundle, and denote by $\frg$ the Lie algebra of~$G$.
Then, $(M,J)$ has an sG metric if and only if $\frg\cong \frg_1$, $\frg_2^{\alpha}$, %%%$(\alpha\geq0)$,
$\frg_3$, $\frg_5$, $\frg_7$ or $\frg_8$.

Moreover, if $\frg\cong \frg_1$, $\frg_2^{\alpha}$, %%%$(\alpha\geq0)$,
$\frg_3$ or $\frg_8$, then any invariant Hermitian metric is sG.
\end{theorem}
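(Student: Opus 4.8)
The plan is to follow exactly the strategy used for SKT and balanced metrics: start from the classification of pairs $(\frg,J)$ in Theorem~\ref{solvmanifolds-complex-classification}, write a generic invariant $J$-Hermitian metric $F$ as in \eqref{2form}, and for each Lie algebra examine the sG condition, namely whether the $(3,2)$-form $\partial F^2$ is $\db$-exact. Since the canonical bundle is holomorphically trivial, the symmetrization process reduces the existence of (invariant or not) sG metrics to the Lie algebra level, so it suffices to work with the complex structure equations from Section~\ref{sec-reduc}. The key observation is that $\db$ maps invariant $(3,1)$-forms to invariant $(3,2)$-forms, and on a $6$-dimensional Lie algebra the space of $(3,2)$-forms is spanned by $\omega^{123\bar1\bar2}$, $\omega^{123\bar1\bar3}$, $\omega^{123\bar2\bar3}$; so $\partial F^2$ being $\db$-exact is a concrete linear-algebra condition once one computes $\db$ on the three basic $(3,1)$-forms $\omega^{123\bar k}$ for each $(\frg,J)$.

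First I would dispatch the ``if'' direction: for $\frg_1,\frg_2^\alpha,\frg_3,\frg_5,\frg_7,\frg_8$ (the latter only for the family \eqref{equations2-3-3}) we already know from Theorem~\ref{balanced-geometry} that balanced metrics exist, and balanced $\Rightarrow$ sG trivially, so these cases are done — except that for $\frg_8$ we must still produce sG metrics for the complex structures \eqref{equations2-3-1} and \eqref{equations2-3-2}, which have no balanced metric. Here I would use the expression \eqref{partialF2-g8} for $2\,\partial F^2$ computed from \eqref{equations_nakamura}: with $A=-i$ the terms involving $(\overline A-i)$ collapse and $2\,\partial F^2 = -(r^2s^2-|u|^2)\,\overline C\,\omega^{123\bar1\bar3} + (r^2s^2-|u|^2)\,\overline B\,\omega^{123\bar2\bar3}$, and one checks that $\db(\omega^{123\bar1})$ and $\db(\omega^{123\bar2})$ (using $d\omega^3=0$, $d\omega^1=2i\omega^{13}+\cdots$, $d\omega^2=-2i\omega^{23}+\cdots$) have exactly $\omega^{123\bar1\bar3}$ and $\omega^{123\bar2\bar3}$ components, so $\partial F^2$ lies in the image of $\db$ for every admissible $F$; hence every metric on \eqref{equations2-3-1}, \eqref{equations2-3-2} is sG. For $\frg_1,\frg_2^\alpha,\frg_3$ I would moreover note, using \eqref{partialF2-g1-g2} and \eqref{partialF2-g3} together with the computation of $\db$ on the relevant $(3,1)$-forms, that the nonzero components of $\partial F^2$ always land in $\db(\Lambda^{3,1})$, yielding the stronger ``moreover'' claim that \emph{every} invariant Hermitian metric on these (and on $\frg_8$) is sG.

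For the ``only if'' direction the remaining Lie algebras are $\frg_4$, $\frg_6$ and $\frg_9$. For $\frg_4$ and $\frg_6$ (i.e.\ $\frg_k$ with $4\le k\le 7$ excluded from the balanced list) I would use \eqref{partialF2-g4-g5-g6-g7} and show that the obstruction is genuinely cohomological: compute $H^{3,2}_{\db}$-type information, or more directly show that the $\omega^{123\bar1\bar3}$ and $\omega^{123\bar2\bar3}$ components force $v=z=0$ (since $A\ne0$, $r^2s^2-|u|^2>0$) and then the surviving $\omega^{123\bar1\bar2}$ component $t^2(s^2G_{11}+r^2G_{22}-i\bar u G_{12}+iu\overline{G_{12}})$ is \emph{not} $\db$-exact because, for the complex structures \eqref{equations2-2-g4} and \eqref{equations2-2-g6}, $\db$ of every $(3,1)$-form has vanishing $\omega^{123\bar1\bar2}$ component (as $d\omega^3$ is a combination of $\omega^{1\bar1},\omega^{2\bar2}$ type terms which produce $\bar1\bar3,\bar2\bar3$ pieces, not $\bar1\bar2$) while that coefficient is strictly positive for $\frg_6$ ($G_{11}=G_{22}=1$) and for $\frg_4$ ($G_{11}=\pm1$, $G_{22}=G_{12}=0$ gives $\pm s^2$, which is nonzero). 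For $\frg_9$ I would invoke \eqref{partialF2-g9}: the $\omega^{123\bar2\bar3}$-component equals $2(|u|^2-r^2s^2)\ne0$, and from \eqref{reduced_equations_B64} one sees $\db$ of any $(3,1)$-form cannot produce such a component (a quick check of $\db(\omega^{123\bar1}),\db(\omega^{123\bar2}),\db(\omega^{123\bar3})$), so $\partial F^2$ is never $\db$-exact. The main obstacle is precisely this last type of verification: showing non-exactness requires explicitly describing the image $\db(\Lambda^{3,1}\frg^*)$ inside $\Lambda^{3,2}\frg^*$ for each of $\frg_4,\frg_6,\frg_9$ — a short but careful computation with the reduced structure equations — rather than anything conceptually delicate.
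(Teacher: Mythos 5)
Your overall strategy is the same as the paper's: use ``balanced $\Rightarrow$ sG'' together with Theorem~\ref{balanced-geometry} for the positive cases, treat the two exceptional structures \eqref{equations2-3-1}, \eqref{equations2-3-2} on $\frg_8$ via \eqref{partialF2-g8} with $A=-i$, and for $\frg_4,\frg_6,\frg_9$ compare $\partial F^2$ with $\db(\Lambda^{3,1})$. Your treatment of $\frg_9$ and of the $A=-i$ structures on $\frg_8$ coincides with the paper's.

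There is, however, a genuine flaw in your argument for $\frg_4$ and $\frg_6$. You claim that the $\omega^{123\bar1\bar3}$ and $\omega^{123\bar2\bar3}$ components of $\partial F^2$ ``force $v=z=0$''. That deduction is valid for the balanced condition $\partial F^2=0$, but not for the sG condition: by your own observation, for the structures \eqref{equations2-2} one has $\db(\Lambda^{3,1})=\langle\omega^{123\bar1\bar3},\omega^{123\bar2\bar3}\rangle$, so precisely those two components impose \emph{no} condition at all — they can always be absorbed by a $\db$-exact term. Consequently your argument never examines metrics with $v\neq0$ or $z\neq0$, and the coefficient you then analyse, $t^2\left(s^2G_{11}+r^2G_{22}-i\bar u\,G_{12}+iu\,\overline{G}_{12}\right)$, is only the specialization of the true obstruction at $v=z=0$ (also note for $\frg_4$ it is $\pm t^2s^2$, not $\pm s^2$). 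The correct statement, which is what the paper proves, is that $F$ is sG if and only if the full $\omega^{123\bar1\bar2}$-coefficient of $\partial F^2$ in \eqref{partialF2-g4-g5-g6-g7} vanishes; that coefficient equals $\pm(s^2t^2-|v|^2)$ for $\frg_4$ and $(s^2t^2-|v|^2)+(r^2t^2-|z|^2)$ for $\frg_6$, and these never vanish by the positivity constraints on $F$, whence nonexistence for \emph{all} admissible $(v,z)$. So the conclusion is right, but your proof as written has a hole exactly where it matters (metrics with $v$ or $z$ nonzero), and the repair is to drop the ``$v=z=0$'' step and read off the general coefficient.

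A smaller point: your ``moreover'' claim that every invariant metric on $\frg_8$ is sG is explicitly verified only for the $A=-i$ structures \eqref{equations2-3-1}, \eqref{equations2-3-2}; for the family \eqref{equations2-3-3} you only invoke balanced metrics, which gives existence but not the statement for arbitrary $F$. The paper handles all of $\frg_8$ at once by combining Proposition~\ref{complex-moduli-g8} with the general formula \eqref{partialF2-g8}, and your argument should do the same (i.e.\ compute $\db(\Lambda^{3,1})$ and $\partial F^2$ for general $A$), not just at $A=-i$.
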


\begin{proof}
Since balanced implies sG, by Theorem~\ref{balanced-geometry} we know
that if $\frg\cong \frg_1$, $\frg_2^{\alpha}$,
$\frg_3$, $\frg_5$, $\frg_7$ or $\frg_8$, then there exist sG metrics.
Moreover, any $J$ on the Lie algebras $\frg_1$, $\frg_2^{\alpha}$,
$\frg_3$, $\frg_5$ and $\frg_7$ admits sG metrics. We prove next that
there are not sG metrics on $\frg_4$, $\frg_6$ and $\frg_9$.

From \eqref{equations2-2} we have
$\db(\Lambda^{3,1})=\langle\omega^{123\bar{1}\bar{3}},\omega^{123\bar{2}\bar{3}}\rangle$, and
by~\eqref{partialF2-g4-g5-g6-g7} the (3,2)-form $\partial F^2$
is a combination of $\omega^{123\bar{1}\bar{2}}$, $\omega^{123\bar{1}\bar{3}}$ and $\omega^{123\bar{2}\bar{3}}$.
Hence, the existence of sG metric is equivalent to the vanishing of the coefficient of $\omega^{123\bar{1}\bar{2}}$ in $\partial F^2$.
By \eqref{equations2-2-g4}, the Lie algebra $\frg_4$ corresponds to
$A=i$, $G_{11}=\pm1$ and $G_{12}=G_{22}=0$, so the coefficient of $\omega^{123\bar{1}\bar{2}}$
is equal to $\pm(s^2t^2-|v|^2)$, which is never zero.
On the other hand, by \eqref{equations2-2-g6} the Lie algebra $\frg_6$ corresponds to
$A=i$, $G_{11}=G_{22}=1$ and $G_{12}=0$,
and the coefficient of $\omega^{123\bar{1}\bar{2}}$ is $(s^2t^2-|v|^2)+(r^2t^2-|z|^2)$,
which is strictly positive.
In conclusion, there do not exist sG metrics for $\frg_4$ or $\frg_6$.

For the Lie algebra $\frg_9$, equations \eqref{reduced_equations_B64} imply
$$
\db\omega^{123\bar{1}}=0,\qquad
\db\omega^{123\bar{2}}=(i/2)\, \omega^{123\bar{1}\bar{2}},\qquad
\db\omega^{123\bar{3}}=-(i/2)\, \omega^{123\bar{1}\bar{3}},
$$
therefore $\db \Lambda^{3,1}=\langle \omega^{123\bar{1}\bar{2}},\omega^{123\bar{1}\bar{3}} \rangle$.
By \eqref{partialF2-g9} we have that
the component of $\partial F^2$ in $\omega^{123\bar{2}\bar{3}}$ is nonzero,
so $\partial F^2\not\in \db \Lambda^{3,1}$ and $F$ is never sG.
Thus, there do not exist sG metrics for $\frg_9$.

To finish the proof it remains to see that any pair $(J,F)$ on
$\frg_1$, $\frg_2^{\alpha}$, $\frg_3$ and $\frg_8$ is sG.
By Proposition~\ref{complex-moduli-g1-g2} and \eqref{partialF2-g1-g2}
a direct calculation implies $\partial F^2 \in \db(\Lambda^{3,1})$,
so any $(J,F)$ on $\frg_1$ or $\frg_2^{\alpha}$ is sG. For $\frg_3$ (resp. $\frg_8$)
we also have $\partial F^2 \in \db(\Lambda^{3,1})$ for any Hermitian structure $(J,F)$
by Proposition~\ref{complex-moduli-g3} and \eqref{partialF2-g3}
(resp. Proposition~\ref{complex-moduli-g8} and~\eqref{partialF2-g8}).
\end{proof}

%%%%%%%%%%%%%%%%%%%%%%%%%%%%%%%%%%%%%%%%%%%%%%%%%%%%
\section{Holomorphic deformations}\label{hol-deform}
%%%%%%%%%%%%%%%%%%%%%%%%%%%%%%%%%%%%%%%%%%%%%%%%%%%%

\noindent In this section we study some properties related to the existence of balanced metrics
under deformation of the complex structure. In what follows, $(M,J_a)_{a\in \Delta}$,
$\Delta$ being an open disc around the origin in $\mathbb{C}$, will denote a holomorphic family of compact complex manifolds.
We briefly recall that a property is said to be \emph{open} under holomorphic deformations
if when it holds for a given compact complex manifold $(M,J_{0})$,
then $(M,J_a)$ also has that property for all $a\in\Delta$ sufficiently close to $0$.
On the other hand, a property is said to be \emph{closed} under holomorphic deformations
if whenever $(M,J_{a})$ has that property for all $a\in \Delta\setminus \{0\}$
then the property also holds for the central limit $(M,J_{0})$.

Concerning the property of existence of balanced Hermitian metrics, Alessandrini and Bassanelli proved in \cite{AB} (see also \cite{FG})
that it is not deformation open.
In contrast to the balanced case, the sG property is open under holomorphic deformations \cite{Pop2}.
However, in \cite{COUV} it is shown that the sG property and the balanced property
of compact complex manifolds are not closed under holomorphic deformations.
More concretely, there exists a holomorphic family of compact complex manifolds $(M,J_a)_{a\in \Delta}$ such that
$(M,J_a)$ has balanced metric for any $a\not=0$ but the central limit $(M,J_0)$ does not admit any sG metric, which
provides a counterexample to the Popovici and Demailly closedness conjectures formulated in \cite{Pop2}.

%%%On the other hand, Popovici has proved that the existence of sG metrics in the central limit
%%%is guaranteed under strong additional conditions.
%%%Recall that a compact complex manifold $M$ satisfies the $\partial\db$-lemma
%%%if $\ker \partial\cap \ker \db \cap {\rm im}\, d={\rm im}\, \partial\db$, or equivalently if
%%%any $\partial$-closed, $\db$-closed and $d$-exact complex form on $M$ is $\partial\db$-exact.
On the other hand, recall that a compact complex manifold $M$ is said to satisfy the $\partial\db$-lemma if for any
$d$-closed form $\alpha$ of pure type on $M$, the following exactness properties are equivalent:

\vskip.2cm

\hskip1cm $\alpha$ is $d$-exact $\Longleftrightarrow$ $\alpha$ is $\partial$-exact
$\Longleftrightarrow$ $\alpha$ is $\db$-exact
$\Longleftrightarrow$ $\alpha$ is $\partial\db$-exact.

\vskip.2cm

\noindent
Under this strong condition, the existence of sG metric in the central limit
is guaranteed:

\begin{proposition}\cite[Proposition 4.1]{Pop09}\label{Popovici}
If the $\partial \db$-lemma holds on $(M,J_a)$ for every $a\in \Delta\setminus \{0\}$,
then $(M,J_0)$ has an sG metric.
\end{proposition}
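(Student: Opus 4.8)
The plan is to reduce the assertion to a limiting argument along the family, the only structural input being the elementary fact that a compact complex manifold satisfying the $\partial\db$-lemma carries a strongly Gauduchon metric: if $F$ is any Gauduchon metric on such a manifold then $\partial\db F^{n-1}=0$, so the $(n,n-1)$-form $\partial F^{n-1}$ is $\db$-closed and $\partial$-exact, hence $\partial\db$-exact by the $\partial\db$-lemma, hence $\db$-exact, which is exactly the sG condition. In particular each $(M,J_a)$ with $a\neq0$ carries an sG metric. I would combine this with the current-theoretic reformulation of the sG property: $(M,J_0)$ carries an sG metric if and only if there is no nonzero $d$-exact positive current $T$ of bidegree $(1,1)$ on $(M,J_0)$. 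This follows from Hahn--Banach separation between the linear span of the $(n-1,n-1)_0$-components of the real $d$-closed $(2n-2)$-forms and the open cone of strictly positive $(n-1,n-1)$-forms, using that a current annihilating every $d$-closed $(2n-2)$-form is $d$-closed and, by de Rham duality, $d$-exact; recall also that every strictly positive $(n-1,n-1)$-form is $F^{n-1}$ for a unique Hermitian metric $F$ (Michelsohn), so a real $d$-closed $(2n-2)$-form with strictly positive $(n-1,n-1)_0$-part already produces an sG metric on $(M,J_0)$.

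Assuming this, I would argue by contradiction. Suppose $(M,J_0)$ has no sG metric, and fix a nonzero $d$-exact positive $(1,1)_0$-current $T=dS$. Choose a $C^\infty$ family of Gauduchon metrics $F_a$ on $(M,J_a)$, normalised by $\int_M F_a^n=1$; such a family exists and depends smoothly on $a$ including at $a=0$, by Gauduchon's theorem and elliptic regularity with parameters. For $a\neq0$ write $\partial_aF_a^{\,n-1}=\db_a\beta_a$ with $\beta_a$ of bidegree $(n,n-2)$ for $J_a$, and set $\Omega_a:=F_a^{\,n-1}-\beta_a-\bar\beta_a$. Because $\partial_a\beta_a$ and $\db_a\bar\beta_a$ vanish for bidegree reasons, we have $d\beta_a=\partial_aF_a^{\,n-1}$ and $d\bar\beta_a=\db_aF_a^{\,n-1}$, and since $dF_a^{\,n-1}=\partial_aF_a^{\,n-1}+\db_aF_a^{\,n-1}$, the form $\Omega_a$ is real, $d$-closed, of degree $2n-2$, with $(n-1,n-1)_a$-part equal to $F_a^{\,n-1}$.

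Now pair $T$ (which, being of bidegree $(1,1)$ for $J_0$, only sees $(n-1,n-1)_0$-components) with $F_a^{\,n-1}=\Omega_a+\beta_a+\bar\beta_a$ for $a\neq0$. Since $\Omega_a$ is $d$-closed and $T$ is $d$-exact, $\langle T,\Omega_a\rangle=0$, so $\langle T,F_a^{\,n-1}\rangle=\langle T,\beta_a+\bar\beta_a\rangle$; and the $(n-1,n-1)_0$-component of $\beta_a$ is pointwise $O(|a|)$ times $|\beta_a|$, because $\beta_a$ has pure type $(n,n-2)$ for the nearby structure $J_a$. Hence, \emph{provided the} $\beta_a$ \emph{stay bounded in} $C^0$ \emph{as} $a\to0$, the right-hand side tends to $0$. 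On the other hand $F_a^{\,n-1}\to F_0^{\,n-1}$ in $C^\infty$ with $F_0$ of type $(1,1)_0$, so $\langle T,F_a^{\,n-1}\rangle\to\langle T,F_0^{\,n-1}\rangle$, which is the mass of the nonzero closed positive current $T$ measured with $F_0$ and is therefore strictly positive --- a contradiction. Alternatively, and equivalently, one extracts a $C^\infty$-limit $\Omega_0$ of the $\Omega_a$; it is a real $d$-closed $(2n-2)$-form whose $(n-1,n-1)_0$-part is $\lim_a F_a^{\,n-1}=F_0^{\,n-1}>0$, so its Michelsohn root is an sG metric on $(M,J_0)$.

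The genuinely non-formal step --- and the one I expect to be the main obstacle --- is the uniform bound on the potentials $\beta_a$ as $a\to0$. This is where the hypothesis that the $\partial\db$-lemma holds at every $a\neq0$ is truly used: it forces the Dolbeault numbers $h^{p,q}_{\db}(J_a)$, and likewise the Bott--Chern numbers, to be independent of $a$ for $a\neq0$, since they are upper semicontinuous and, in each total degree, add up to the (constant) Betti number, so none of them can jump. Consequently the $\db_a$-harmonic spaces on $(n,n-2)$-forms have constant dimension for $a\neq0$, and the theory of elliptic operators with parameters provides Green's operators $G_{\db_a}$ and a $C^\infty$ family of minimal solutions $\beta_a=\db_a^*\,G_{\db_a}\bigl(\partial_aF_a^{\,n-1}\bigr)$ for $a\neq0$. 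The subtle point is the passage to $a=0$, where the $\partial\db$-lemma is not assumed and where the bottom of the spectrum of the relevant Laplacian may a priori collapse; one must still show that $\|\beta_a\|$ does not blow up, which I would handle by a normal-families/renormalisation argument combined with the very pairing estimate against $T$ above. I expect this uniform solvability of the $\db_a$-equation to be the technical heart of the proof. Finally, note that openness of the sG property would not by itself suffice, because sG is not a closed property under holomorphic deformations in general; the full strength of ``$\partial\db$-lemma for all $a\neq0$'' is essential.
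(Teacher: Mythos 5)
Your overall skeleton is the right one (and it is essentially the strategy of Popovici, from whose paper \cite[Proposition 4.1]{Pop09} this statement is quoted verbatim -- the present paper does not reprove it): under the $\partial\db$-lemma every Gauduchon metric $F_a$ on $(M,J_a)$ is sG, the identity $\Omega_a=F_a^{\,n-1}-\beta_a-\bar\beta_a$ with $\db_a\beta_a=\partial_aF_a^{\,n-1}$ produces a real $d$-closed $(2n-2)$-form whose $(n-1,n-1)_a$-part is $F_a^{\,n-1}$, and the Hahn--Banach characterization of sG by the absence of nonzero $d$-exact positive $(1,1)$-currents is the correct dual object to pair against. All of these steps, including the bidegree bookkeeping and the vanishing $\langle T,\Omega_a\rangle=0$, are correct.

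The proof nevertheless has a genuine gap, and it is exactly the step you flag yourself: the control of $\beta_a$ as $a\to 0$. Your argument needs $\|\beta_a\|_{C^0}=o(1/|a|)$ (boundedness would more than suffice), and nothing you write establishes even this weaker bound. The ``elliptic operators with parameters'' argument gives a smooth family of minimal solutions $\beta_a=\db_a^*G_{\db_a}(\partial_aF_a^{\,n-1})$ only on $\Delta\setminus\{0\}$, where the relevant harmonic spaces have constant dimension; at $a=0$ the $\partial\db$-lemma is not assumed, the Hodge numbers may jump, the small eigenvalues of the $\db$-Laplacian on $(n,n-2)$-forms may tend to $0$, and the Green operators may blow up -- this is precisely the scenario you cannot exclude. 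The fallback you propose does not repair it: if $\|\beta_a\|_{C^0}\to\infty$, renormalizing $\Omega_a$ by $\|\beta_a\|_{C^0}$ kills the positive term $F_a^{\,n-1}$, and a weak-$*$ limit of $(\beta_a+\bar\beta_a)/\|\beta_a\|_{C^0}$ is merely some $d$-closed current of degree $2n-2$ with vanishing $(n-1,n-1)_0$-part, which contradicts nothing; nor does the pairing against $T$ give any a priori information on $\|\beta_a\|$, since $T$ only bounds the $(n-1,n-1)_0$-component of $\beta_a$, which is already small. Establishing the required quantitative control on the $\db_a$-potentials (equivalently, producing a $d$-closed $(2n-2)$-form, or current, on $M$ with positive $(n-1,n-1)_0$-part in the limit) is the analytic heart of Popovici's theorem, and it is missing from your proposal; as written, the argument does not close.
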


An interesting problem is if the conclusion in the above proposition holds under weaker conditions than the $\partial \db$-lemma.
In \cite[Corollary 4.5]{LUV} it is proved that the vanishing of some complex invariants, which are closely related to the
$\partial \db$-lemma, is not sufficient to ensure the existence of an sG metric in the central limit.

Another problem related to Proposition~\ref{Popovici} is if the central limit admits a Hermitian metric, stronger than sG,
under the $\partial \db$-lemma condition. Our aim in this section is to construct a holomorphic family of compact complex manifolds
$(M,J_a)_{a\in \Delta}$ such that $(M,J_a)$ satisfies the $\partial\db$-lemma and admits balanced metric
for any $a\not=0$, but the central limit neither satisfies the $\partial\db$-lemma nor admits balanced metric.
As far as we know, this is the first known complex deformation with this behaviour.
The construction is based on the balanced Hermitian geometry of $\frg_8$ studied in Theorem~\ref{balanced-geometry},
which is the Lie algebra underlying Nakamura manifold.

Concerning the $\partial\db$-lemma property, it is known that it is an open property (see for instance \cite{AT} for
a proof of this fact), and recently Angella and Kasuya have proved in \cite{AK}
that the $\partial\db$-lemma is not a closed property under holomorphic deformations.
The construction in \cite{AK} consists in a suitable deformation $(M,I_t)$ of
the holomorphically parallelizable Nakamura manifold $(M,I_0)$ (notice that $(M,I_0)$ has balanced metrics).
We will use their result on the $\partial\db$-lemma for $(M,I_t)$, $t\not=0$, as a key ingredient
in the proof of the following result.

\begin{theorem}\label{counterexample-Popovici}
There exists a solvmanifold $M$ with a holomorphic family of complex structures $J_a$,
$a\in\Delta=\{a\in \mathbb{C} \mid |a|<1\}$, such that $(M,J_a)$ satisfies the $\partial\db$-lemma and admits balanced metric
for any $a\not=0$, but the central limit $(M,J_0)$ neither satisfies the $\partial\db$-lemma nor admits balanced metrics.
\end{theorem}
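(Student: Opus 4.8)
The plan is to realize the desired family $(M,J_a)$ inside the solvmanifold $M=\Gamma\backslash G_8$ associated with the Lie algebra $\frg_8=N_{6,18}^{0,-1,-1}$ (the Nakamura solvmanifold), whose lattice exists by Proposition~\ref{existence-of-lattices}. The central complex structure $J_0$ will be chosen among the complex structures of type \eqref{equations2-3-1} or \eqref{equations2-3-2} on $\frg_8$; by Theorem~\ref{balanced-geometry} these are precisely the ones on $\frg_8$ that do \emph{not} admit balanced metrics. Since the holomorphically parallelizable structure on $\frg_8$ corresponds to $A=-i$ in the family \eqref{equations2-3-3} (see Remark~\ref{remark-Nakamura}), the structures \eqref{equations2-3-1}--\eqref{equations2-3-2} are exactly the small non-holomorphically-parallelizable deformations of $(M,I_0)$ studied by Angella and Kasuya in \cite{AK}. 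The first step is therefore to identify one of these $J_0$ with the central fibre $(M,I_0)$'s deformation appearing in \cite{AK}, and to take $(M,J_a):=(M,I_a)$ for $a$ in a small disc $\Delta$.

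Next I would verify the two properties for $a\neq 0$. For the $\partial\db$-lemma of $(M,J_a)$, $a\neq 0$, I would simply invoke the main computation of \cite{AK}: they show precisely that their deformed Nakamura manifolds $(M,I_t)$ with $t\neq 0$ satisfy the $\partial\db$-lemma while $(M,I_0)$ does not, so this is quoted verbatim. For the existence of balanced metrics on $(M,J_a)$, $a\neq0$, I would use Theorem~\ref{balanced-geometry}: every complex structure on $\frg_8$ in the family \eqref{equations2-3-3} admits invariant balanced Hermitian metrics, so it suffices to check that the nearby structures $J_a$, $a\neq0$, are of type \eqref{equations2-3-3} (equivalently, in the notation of \eqref{equations_nakamura}, that $A_a\neq-i$ for $a\neq0$ while $A_0=-i$). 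This identification of the deformation parameter is the one genuinely new bookkeeping step: I would write down the Kuranishi/explicit deformation of the $(1,0)$-coframe $\{\omega^1,\omega^2,\omega^3\}$ as in \cite{AK}, read off the coefficients of the deformed equations \eqref{equations_nakamura}, and confirm that the $\omega^{3\bar3}$-coefficients $B_a,C_a$ (which obstruct balanced-ness only when $A_a=-i$) become harmless precisely because $A_a$ moves away from $-i$.

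For the central limit I would argue as follows. The failure of the $\partial\db$-lemma on $(M,J_0)$ is again exactly the statement for $(M,I_0)$ in \cite{AK} (the holomorphically parallelizable Nakamura manifold famously fails $\partial\db$). The non-existence of \emph{any} balanced metric — not merely invariant ones — follows from Theorem~\ref{balanced-geometry} together with the symmetrization argument recalled in Section~\ref{sec-metrics}: on a solvmanifold with invariant complex structure and holomorphically trivial canonical bundle, the existence of a balanced metric descends to the Lie algebra, and $\frg_8$ with a complex structure of type \eqref{equations2-3-1} or \eqref{equations2-3-2} has no invariant balanced metric. Finally I would remark, as the paper announces, that $(M,J_0)$ nonetheless carries a strongly Gauduchon metric by Theorem~\ref{sG-geometry} (any invariant Hermitian metric on $\frg_8$ is sG), in agreement with Proposition~\ref{Popovici}.

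The main obstacle I anticipate is not conceptual but a matter of matching conventions: one must line up the explicit coframe and deformation parameters of \cite{AK} with the normal forms \eqref{equations2-3-1}--\eqref{equations2-3-3} and \eqref{equations_nakamura} derived here, and in particular check that for small $a\neq0$ the deformed structure genuinely falls in the balanced family \eqref{equations2-3-3} rather than degenerating back to \eqref{equations2-3-1}/\eqref{equations2-3-2}. Everything else is a direct appeal to Theorems~\ref{balanced-geometry} and \ref{sG-geometry} and to the results of \cite{AK}.
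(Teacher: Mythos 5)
Your plan hinges on the claim that the structures \eqref{equations2-3-1}--\eqref{equations2-3-2} ``are exactly the small non-holomorphically-parallelizable deformations of $(M,I_0)$ studied by Angella--Kasuya'', and on then setting $(M,J_a):=(M,I_a)$. This is not true, and it breaks the argument precisely at the central fibre. The Angella--Kasuya family is given by the coframe $\Upsilon^1=\omega^1$, $\Upsilon^2=\omega^2$, $\Upsilon^3=\omega^3-t\,\omega^{\bar3}$ built from the bi-invariant structure $I_0=J_{A=-i}$ of the family \eqref{equations2-3-3}; its structure equations \eqref{eq-deform-bis} contain no $\Upsilon^{3\bar3}$-terms, so every fibre $I_t$, including $t=0$, is (up to equivalence) of type \eqref{equations2-3-3} and hence admits balanced metrics by Theorem~\ref{balanced-geometry}. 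In particular, no fibre of the AK family is of type \eqref{equations2-3-1} or \eqref{equations2-3-2}, the central fibre $I_0$ \emph{does} carry balanced metrics, and the family $(M,I_a)$ cannot serve as the desired example. Relatedly, your argument that the central limit fails the $\partial\db$-lemma ``because $(M,I_0)$ does'' conflates $J_0$ with $I_0$: if $J_0$ is to be the structure \eqref{equations2-3-1}, its failure of the $\partial\db$-lemma needs a separate argument (the paper gives one at the Lie-algebra level via symmetrization: $\omega^{23}$ is $\partial$-closed, $\db$-closed and $d$-exact, but not $\partial\db$-exact).

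The genuinely missing step is therefore the construction of a holomorphic family whose central fibre \emph{is} \eqref{equations2-3-1} and whose nonzero fibres can be identified with AK's $I_t$, $t\neq0$. The paper does this explicitly: starting from a coframe satisfying \eqref{equations2-3-1} it sets $\Phi^1=\omega^1$, $\Phi^2=\omega^2$, $\Phi^3=\omega^3+a\,\omega^{\bar3}$, obtains the equations \eqref{eq-deform} (which still carry a $\Phi^{3\bar3}$-term in $d\Phi^1$), writes down explicit invariant balanced metrics \eqref{balanced-in-deform} for $a\neq0$, and then removes the $\Phi^{3\bar3}$-term by the change $\Theta^1=\Phi^1+\frac{i}{2a}\Phi^3$ --- a change that degenerates as $a\to0$ --- to recognize $J_a=I_{-a}$ for $a\neq0$, so that \cite[Proposition 4.1]{AK} (for the specific lattice used there) gives the $\partial\db$-lemma on the nonzero fibres; see also Remark~\ref{correspondence}, which shows $J_a\cong J_A$ in \eqref{equations2-3-3} with $A$ moving away from $-i$. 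This blowing-up change of frame is exactly why all nonzero fibres are AK fibres while the central limit is the non-balanced structure \eqref{equations2-3-1}. Without producing such a family, your outline does not get off the ground, although your remaining ingredients --- Theorem~\ref{balanced-geometry} for the balanced metrics at $a\neq0$, and symmetrization to rule out non-invariant balanced metrics at $a=0$ --- are the right ones once the family exists.
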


\begin{proof}
Let $J$ be the complex structure
on the Lie algebra $\frg_8$ defined by \eqref{equations2-3-1} in Proposition~$\ref{complex-moduli-g8}$.
By Theorem~\ref{balanced-geometry}, any complex solvmanifold 
$(M,J)$ 
with underlying Lie algebra $\frg_8$ and complex structure $J$ 
does not admit balanced metrics.
Next we will deform $J$ to an analytic family $J_a$ defined for any $a$ in the open unit disc $\Delta$ in 
$\mathbb{C}$ centered at $0$, so that $J$ is the central limit of the holomorphic deformation, i.e. $J_0=J$.

For each $a\in \mathbb{C}$ such that $|a|<1$, we consider the complex structure $J_a$ on $M$
defined by the (1,0)-basis
$$
\Phi^1=\omega^1,\ \ \Phi^2=\omega^2,\ \ \Phi^3=\omega^3+ a\, \omega^{\bar{3}}.
$$
It is easy to check that the complex structure equations are
\begin{equation}\label{eq-deform}
\left\{
\begin{array}{l}
d\Phi^1=\frac{2i}{1-|a|^2}\Phi^{13}-\frac{2ia}{1-|a|^2}\Phi^{1\bar{3}}+\frac{1}{1-|a|^2}\Phi^{3\bar{3}},\\[3pt]
d\Phi^2=-\frac{2i}{1-|a|^2}\Phi^{23}+\frac{2ia}{1-|a|^2}\Phi^{2\bar{3}},\\[3pt]
d\Phi^3=0.
\end{array}
\right.
\end{equation}
Using these equations, the (2,3)-form $\db F^2$ for a generic metric~\eqref{2form} with respect to the basis
$\{\Phi^{1},\Phi^{2},\Phi^{3}\}$
reads as
$$
2\, \db F^2= \left[ \frac{2ia(ir^2\bar{v}-u\bar{z})}{1-|a|^2} \right] \Phi^{13\bar{1}\bar{2}\bar{3}}
+\left[ \frac{2ia(is^2\bar{z}+\bar{u}\bar{v})}{1-|a|^2} + (r^2s^2-|u|^2) \right] \Phi^{23\bar{1}\bar{2}\bar{3}}.
$$
Suppose that $a\not=0$ with $|a|<1$.
If $u=v=0$ then the balanced condition reduces to solve
$$
\frac{2a\bar{z}}{1-|a|^2} =r^2, \quad \mbox{ with } r^2t^2 > |z|^2.
$$
Thus, taking $z=(1-|a|^2)r^2/(2\bar{a})$, the condition
$r^2t^2 > |z|^2$ is satisfied for any $t$ such that $t^2 > \frac{(1-|a|^2)^2r^2}{4|a|^2}$.

Therefore, we have proved that for any $J_a$, $a\in \Delta-\{0\}$, the structures
\begin{equation}\label{balanced-in-deform}
2\,F=i\,(r^2\Phi^{1\bar1}+s^2\Phi^{2\bar2}+t^2\Phi^{3\bar3})
+\frac{(1-|a|^2)r^2}{2\bar{a}}\Phi^{1\bar3}-\frac{(1-|a|^2)r^2}{2a}\Phi^{3\bar1},
\end{equation}
with $r,s\not=0$ and $t^2 > \frac{(1-|a|^2)^2r^2}{4|a|^2}$,
are balanced.

Notice that the previous argument is valid for the quotient $M$ of any lattice in the
simply-connected Lie group $G$ associated to $\frg=\frg_8$. However, to ensure the $\partial\db$-lemma
for the complex structures $J_a$ with $a\not=0$ we need to consider the lattice $\Gamma$ considered in \cite{AK}.
In fact, in \cite{AK} the authors consider
the holomorphically parallelizable Nakamura manifold $X=(\Gamma\backslash G,I_0)$, whose complex structure $I_0$ corresponds to
the complex structure $J_{-i}$ in our family \eqref{equations2-3-3} in Proposition~$\ref{complex-moduli-g8}$,
and they consider a small deformation $I_t$ given by
$$
t \frac{\partial}{\partial z_3} \otimes d \bar{z}_3 \in H^{0,1}(X;T^{1,0}X),
$$
where $z_3$ is a complex coordinate such that $\omega^3=d z_3$. By \cite[Proposition 4.1]{AK}
(see also Tables 7 and 8 in \cite{AK})
one has that
$X_t=(\Gamma\backslash G_8,I_t)$ satisfies the $\partial\db$-lemma for any $t\not=0$.
Since $I_0=J_{-i}$, in terms of complex structure
equations \eqref{equations2-3-3} for $A=-i$ the deformation $I_t$ is defined by the (1,0)-basis
$$
\Upsilon^1=\omega^1,\ \ \Upsilon^2=\omega^2,\ \ \Upsilon^3=\omega^3- t\, \omega^{\bar{3}},
$$
and the structure equations for $I_t$ are
\begin{equation}\label{eq-deform-bis}
\left\{
\begin{array}{l}
d\Upsilon^1=\frac{2i}{1-|t|^2}\Upsilon^{13}+\frac{2it}{1-|t|^2}\Upsilon^{1\bar{3}},\\[3pt]
d\Upsilon^2=-\frac{2i}{1-|t|^2}\Upsilon^{23}-\frac{2it}{1-|t|^2}\Upsilon^{2\bar{3}},\\[3pt]
d\Upsilon^3=0.
\end{array}
\right.
\end{equation}

On the other hand, it is easy to see that for any $a\not=0$ the equations \eqref{eq-deform} express with respect to the
(1,0)-basis $\{\Theta^1=\Phi^1+\frac{i}{2a}\Phi^3,\Theta^2=\Phi^2,\Theta^3=\Phi^3\}$ as
\begin{equation}\label{eq-deform-bis2}
\left\{
\begin{array}{l}
d\Theta^1=\frac{2i}{1-|a|^2}\Theta^{13}-\frac{2ia}{1-|a|^2}\Theta^{1\bar{3}},\\[3pt]
d\Theta^2=-\frac{2i}{1-|a|^2}\Theta^{23}+\frac{2ia}{1-|a|^2}\Theta^{2\bar{3}},\\[3pt]
d\Theta^3=0.
\end{array}
\right.
\end{equation}
Now, from \eqref{eq-deform-bis} and \eqref{eq-deform-bis2} we conclude that for $a\not=0$
the complex structure $J_a$ is precisely the complex structure $I_{t}$ with $t=-a$.
Therefore, for any $a\not=0$ the compact complex manifold $(M,J_a)=(\Gamma\backslash G,J_a)$
satisfies the $\partial\db$-lemma because $X_{t=-a}$ does by \cite[Proposition 4.1]{AK}.

To finish the proof, it remains to see that the central limit $J_0$ does not satisfy the
$\partial\db$-lemma. By the symmetrization process, it suffices to prove that
it is not satisfied at the Lie algebra level $(\frg_8,J_0)$.
But this is clear from the equations \eqref{equations2-3-1}, because the
form $\omega^{23}$ is $\partial$-closed, $\db$-closed and $d$-exact,
however it is not $\partial\db$-exact.
\end{proof}

\begin{remark}\label{correspondence}
{\rm
For any $a\in \Delta-\{0\}$, the equations \eqref{eq-deform-bis2} imply that the (3,0)-form $\Theta^{123}$ is closed, so the 
complex structure $J_a$ must be equivalent to a complex structure $J_A$ in~\eqref{equations2-3-3}. 
In fact, taking
$\omega^1=\Theta^1$, $\omega^2=\Theta^2$ and $\omega^3=\frac{1}{1-a}\Theta^3$, the 
complex equations~\eqref{eq-deform-bis2} express in terms of the (1,0)-basis $\{ \omega^1,\omega^2,\omega^3 \}$
by 
$$
d\omega^1=-(A-i)\omega^{13}-(A+i)\omega^{1\bar{3}},\quad
d\omega^2=(A-i)\omega^{23}+(A+i)\omega^{2\bar{3}},\quad
d\omega^3=0,
$$
with $A=\frac{i}{|a|^2-1}(1+|a|^2-2a)$.
Notice that this correspondence, together with Theorem~\ref{balanced-geometry}, assures the existence of balanced metric
for all $a\in \Delta-\{0\}$, however in the proof of Theorem~\ref{counterexample-Popovici}
we have provided in \eqref{balanced-in-deform} an explicit family of balanced metrics.
Of course, the central limit of any metric \eqref{balanced-in-deform} does not exist, since $J_0$ does not admit any
balanced metric by Theorem~\ref{balanced-geometry}.
}
\end{remark}

\begin{remark}\label{remark-deform}
{\rm
%Notice that \eqref{balanced-in-deform} defines balanced metrics for any $a\not=0$ in the complex deformation $(M,J_a)$.
%However, the central limit of any metric \eqref{balanced-in-deform} does not exist; actually, $J_0$ does not admit any
%balanced metric by Theorem~\ref{balanced-geometry}.
%
We can construct another deformation with central limit the complex structure $J'$ given by \eqref{equations2-3-2},
and it turns out that the deformation has the same behaviour
as for the deformation of the complex structure $J$ given by \eqref{equations2-3-1}
constructed in Theorem~\ref{counterexample-Popovici}.
Therefore, the complex structures $J$ and $J'$ given by \eqref{equations2-3-1} and \eqref{equations2-3-2}, respectively,
are the central limits of complex structures that satisfy the $\partial\db$-lemma.
Notice that this is consistent with Proposition~\ref{Popovici} because by our Theorem~\ref{sG-geometry} both complex structures
admit sG metric.
}
\end{remark}

%%%%%%%%%%%%%%%%%%%%%%%%%%%%%%%%%%%%%%%%%%%%%%%%
\section{Appendix}\label{sec-appendix}
%%%%%%%%%%%%%%%%%%%%%%%%%%%%%%%%%%%%%%%%%%%%%%%%

\noindent In this appendix we include the classification of (non nilpotent) solvable Lie algebras which are unimodular
and have $b_3$ at least 2. Table 1 contains the decomposable case, whereas Table 2 refers to the
indecomposable case.

For Table 1 we use mainly the list of low dimensional Lie algebras of \cite{F-Schu1}.
The $3\oplus3$ case is the product of two 3-dimensional unimodular
(non nilpotent) solvable Lie algebras (notice that in this case $b_3$ is always $\geq 2$ by \eqref{b3}).
The $4\oplus2$ case is the product by $\mathbb{R}^2$ of a 4-dimensional unimodular
(non nilpotent) solvable Lie algebra $\frh$ satisfying $b_1(\frh)+2b_2(\frh)+b_3(\frh)\geq 2$.
Finally, the $5\oplus1$ case is the product by $\mathbb{R}$ of a 5-dimensional unimodular
(non nilpotent) solvable Lie algebra $\frh$ with $b_2(\frh)+b_3(\frh)\geq 2$.

In Table 2, the first two Lie algebras labeled as
%%%$\frg_8=N_{6,18}^{0,-1,-1}$
$N_{6,18}^{0,-1,-1}$ and $N_{6,20}^{-1,-1}$
comes from the classification in \cite{T}, and they are the only unimodular solvable Lie algebras
with $b_3\geq 2$ and nilradical of dimension 4. The other Lie algebras in Table 2 are taken from
\cite{F-Schu2}.
In Table 2 we also include the column ``$\lambda(\rho)\geq 0$'' in which the symbol $\checkmark$
means that any closed 3-form $\rho$ on the Lie algebra satisfies $\lambda(\rho)\geq 0$, in particular,
$\rho$ does not give rise to an almost complex structure
(a similar study was done in \cite{F-Schu1} for any decomposable Lie algebra).
%%% The symbol ``$-$'' means that there is a closed 3-form $\rho$ satisfying $\lambda(\rho)< 0$.

\bigskip
\medskip

\vfill\eject

{\small
\centerline{
%\scalebox{0.8}{
\begin{tabular}{|l|l|c|}
\hline
\bf{Lie algebra} $\frg$& \bf{structure equations}& \bf{closed (3,0)-form} $\Psi$ \\
\hline
$\fre(2)\oplus\fre(2)$&$(0,-e^{13},e^{12},0,-e^{46},e^{45})$&$-$\\\hline
$\fre(2)\oplus\fre(1,1)$&$(0,-e^{13},e^{12},0,-e^{46},-e^{45})$&
$(e^1\!\!-\!ie^4)\!\wedge\!(e^2\!-\!2i(e^2\!\!-\!e^6))\!\wedge\!(e^3\!\!+\!i(\frac{e^3}{2}\!+\!e^5))$\\\hline
$\fre(2)\oplus\frh_3$&$(0,-e^{13},e^{12},0,0,e^{45})$&$-$\\\hline
$\fre(2)\oplus\R^3$&$(0,-e^{13},e^{12},0,0,0)$&$-$\\\hline
$\fre(1,1)\oplus\fre(1,1)$&$(0,-e^{13},-e^{12},0,-e^{46},-e^{45})$&$-$\\\hline
$\fre(1,1)\oplus\frh_3$&$(0,-e^{13},-e^{12},0,0,e^{45})$&$-$\\\hline
$\fre(1,1)\oplus\R^3$&$(0,-e^{13},-e^{12},0,0,0)$&$-$\\\hline
$A_{4,2}^{-2}\oplus\R^2$&$(-2e^{14},e^{24}+e^{34},e^{34},0,0,0)$&$-$\\\hline
$A_{4,5}^{\alpha,-1-\alpha}\oplus\R^2$&$(e^{14},\alpha e^{24},-(1+\alpha)e^{34},0,0,0)$&$-$\\
$-1<\alpha\leq-\frac{1}{2}$&&\\\hline
$A_{4,6}^{\alpha,-\frac{\alpha}{2}}\oplus\R^2$&$(\alpha e^{14},-\frac{\alpha}{2} e^{24}\!+\!e^{34},-e^{24}\!-\!\frac{\alpha}{2} e^{34},0,0,0)$&$-$\\
$\alpha>0$& &\\\hline
$A_{4,8}\oplus\R^2$&$(e^{23},e^{24},-e^{34},0,0,0)$&$-$\\\hline
$A_{4,10}\oplus\R^2$&$(e^{23},e^{34},-e^{24},0,0,0)$&$-$\\\hline
$A_{5,7}^{-1,-1,1}\oplus\R$&$(e^{15},-e^{25},-e^{35},e^{45},0,0)$&$(e^1-ie^4)\wedge(e^2-ie^3)\wedge(e^5-ie^6)$\\\hline
$A_{5,7}^{-1,\beta,-\beta}\oplus\R$&$(e^{15},-e^{25},\beta e^{35},-\beta e^{45},0,0)$&$-$\\
$0<\beta<1$&&\\\hline
$A_{5,8}^{-1}\oplus\R$&$(e^{25},0,e^{35},-e^{45},0,0)$&$-$\\\hline
$A_{5,9}^{-1,-1}\oplus\R$&$(e^{15}+e^{25},e^{25},-e^{35},-e^{45},0,0)$&$-$\\\hline
$A_{5,13}^{-1,0,\gamma}\oplus\R$&$(e^{15},-e^{25},\gamma e^{45},-\gamma e^{35},0,0)$&$-$\\
$\gamma>0$&&\\\hline
$A_{5,14}^{0}\oplus\R$&$(e^{25},0,e^{45},-e^{35},0,0)$&$-$\\\hline
$A_{5,15}^{-1}\oplus\R$&$(e^{15}+e^{25},e^{25},-e^{35}+e^{45},-e^{45},0,0)$&$-$\\\hline
$A_{5,17}^{0,0,\gamma}\oplus\R$&$(e^{25},-e^{15},\gamma e^{45},-\gamma e^{35},0,0)$&$-$\\
$0<\gamma<1$&&\\\hline
$A_{5,17}^{\alpha,-\alpha,1}\oplus\R$&$(\alpha e^{15}+e^{25},-e^{15}+\alpha e^{25},$&$(e^1-ie^2)\wedge(e^4-ie^3)\wedge(e^6-ie^5)$\\
$\alpha\geq 1$& \ $-\alpha e^{35}+e^{45},-e^{35}-\alpha e^{45},0,0)$ &\\\hline
$A_{5,18}^{0}\oplus\R$&$(e^{25}+e^{35},-e^{15}+e^{45},e^{45},-e^{35},0,0)$&$-$\\\hline
$A_{5,19}^{-1,2}\oplus\R$&$(-e^{15}+e^{23},e^{25},-2 e^{35},2 e^{45},0,0)$&$-$\\\hline
$A_{5,19}^{1,-2}\oplus\R$&$(e^{15}+e^{23},e^{25},0,-2 e^{45},0,0)$&$-$\\\hline
$A_{5,20}^{0}\oplus\R$&$(e^{23}+e^{45},e^{25},-e^{35},0,0,0)$&$-$\\\hline
$A_{5,26}^{0,\pm 1}\oplus\R$&$(e^{23}\pm e^{45},-e^{35},e^{25},0,0,0)$&$-$\\\hline
$A_{5,33}^{-1,-1}\oplus\R$&$(e^{14},e^{25},-e^{34}-e^{35},0,0,0)$&$-$\\\hline
$A_{5,35}^{0,-2}\oplus\R$&$(-2e^{14},e^{24}+e^{35},-e^{25}+e^{34},0,0,0)$&$-$\\\hline
\end{tabular}
}
}

%\medskip
%\smallskip
\vspace{5pt}
\centerline{{\bf Table 1.} Decomposable unimodular (non-nilpotent) solvable}

\centerline{Lie algebras with $b_3\geq 2$.}

%\newpage

 {\footnotesize
\centerline{
\begin{tabular}{|l|l|c|c|}
\hline
\bf{Lie algebra} $\frg$& \bf{structure equations}& \bf{closed (3,0)-form} $\Psi$ &$\lambda(\rho)\geq 0$\\
\hline
%$N_{6,18}^{0,-1,-1}$&$(e^{16}-e^{25},e^{15}+e^{26},-e^{36}+e^{45},-e^{35}-e^{46},0,0)$&$J^*e^1=-e^2,\ J^*e^2=e^1,\ J^*e^3=-e^4,$\\
%&&$J^*e^4=e^3,\ J^*e^5=e^6,\ J^*e^6=-e^5,$\\\hline
$N_{6,18}^{0,-1,-1}$&$(e^{16}-e^{25},e^{15}+e^{26},-e^{36}+e^{45},$&$(e^1+ie^2)\wedge(e^3+ie^4)\wedge(e^5-ie^6)$&$-$\\
&$-e^{35}-e^{46},0,0)$&&\\\hline
$N_{6,20}^{-1,-1}$&$(-e^{56},-e^{25}-e^{26},-e^{36},e^{45},0,0)$&$-$&$-$\\\hline
$A_{6,13}^{-1,b,-2b+1}$&$((b-1)e^{16}+e^{23},-e^{26},be^{36},e^{46},$&$-$&$-$\\
$b\notin\{-1,0,\frac{1}{2},1,2\}$&$(1-2b)e^{56},0)$&&\\\hline
$A_{6,13}^{a,-2a,2a-1}$&$(-ae^{16}+e^{23},a e^{26},-2ae^{36},e^{46},$&$-$&$\checkmark$\\
$a\notin\{-1,0,\frac{1}{3},\frac{1}{2}\}$&$(2a-1)e^{56},0)$&&\\\hline
$A_{6,13}^{a,-a,-1}$&$(e^{23},ae^{26},-ae^{36},e^{46},-e^{56},0)$&$-$&$\checkmark$\\
$a>0,\ a\neq1$&&&\\\hline
$A_{6,13}^{a,b,c},\ (a,b,c)\in\{(0,-1,1),$&$((a+b)e^{16}+e^{23},ae^{26},be^{36},e^{46},$&$-$&$-$\\
$(-1,1,-1),(-1,-1,3),$&$ce^{56},0)$&&\\
$(-1,2,-3)\}$&&&\\\hline
$A_{6,14}^{\frac{1}{3},-\frac{2}{3}}$&$(-\frac{1}{3}e^{16}+e^{23}+e^{56},\frac{1}{3}e^{26},-\frac{2}{3}e^{36},$&$-$&$\checkmark$\\
&$e^{46},-\frac{1}{3}e^{56},0)$&&\\\hline
$A_{6,14}^{-1,\frac{2}{3}}$&$(-\frac{1}{3}e^{16}+e^{23}+e^{56},-e^{26},\frac{2}{3}e^{36},$&$-$&$-$\\
&$e^{46},-\frac{1}{3}e^{56},0)$&&\\\hline
$A_{6,15}^{-1}$&$(e^{23},e^{26},-e^{36},e^{26}+e^{46},e^{36}-e^{56},0)$&$-$&$-$\\\hline
$A_{6,17}^{0,-\frac{1}{2}}$&$(-\frac{1}{2}e^{16}+e^{23},-\frac{1}{2}e^{26},0,e^{36},e^{56},0)$&$-$&$-$\\\hline
$A_{6,18}^{a,b}$&$((a+1)e^{16}+e^{23},ae^{26},e^{36},e^{36}+e^{46},$&$-$&$\checkmark$\\
$(a,b)\in\{(-\frac{1}{2},-2),(-2,1)\}$&$be^{56},0)$&&\\\hline
$A_{6,18}^{a,b}$&$((a+1)e^{16}+e^{23},ae^{26},e^{36},e^{36}+e^{46},$&$-$&$-$\\
$(a,b)\in\{(-1,-1),(-3,3)\}$&$be^{56},0)$&&\\\hline
$A_{6,21}^{a,b},\,(a,b)\in\{(0,-1), $&$(2ae^{16}+e^{23},ae^{26},e^{26}+ae^{36},e^{46},$&$-$&$-$\\
$(-1,3),(-\frac{1}{3},\frac{1}{3})\}$&$be^{56},0)$&&\\\hline
$A_{6,25}^{a,b}$&$((b+1)e^{16}+e^{23},e^{26},be^{36},ae^{46},$&$-$&$\checkmark$\\
$(a,b)\in\{(0,-1),(-\frac{1}{2},-\frac{1}{2})\}$&$e^{46}+ae^{56},0)$&&\\\hline
$A_{6,25}^{-1,0}$&$(e^{16}+e^{23},e^{26},0,-e^{46},e^{46}-e^{56},0)$&$-$&$-$\\\hline
$A_{6,26}^{-1}$&$(e^{23}+e^{56},e^{26},-e^{36},0,e^{46},0)$&$-$&$-$\\\hline
$A_{6,32}^{0,b,-b}$&$(e^{23},-e^{36},e^{26},be^{46},-be^{56},0)$&$-$&$\checkmark$\\
$b>0$&&&\\\hline
$A_{6,34}^{0,0,\epsilon}$&$(e^{23}+\epsilon e^{56},-e^{36},e^{26},0,e^{46},0)$&$-$&$\checkmark$\\
$\epsilon\in\{0,1\}$&&&\\\hline
$A_{6,35}^{a,b,c},\, a>0,$& $((a+b)e^{16}+e^{23},ae^{26},be^{36},$ &$-$&$\checkmark$\\
$(b,c)\in\{(-2a,a),(-a,0)\}$ & \ $ce^{46}-e^{56},e^{46}+ce^{56},0)$ &&\\\hline
$A_{6,36}^{0,0}$&$(e^{23},0,e^{26},-e^{56},e^{46},0)$&$-$&$-$\\\hline
$A_{6,37}^{0,0,c}$&$(e^{23},-e^{36},e^{26},-ce^{56},ce^{46},0)$&$-$&$\checkmark$\\
$c>0,\ c\neq1$&&&\\\hline
%$A_{6,37}^{0,0,1}$&$(e^{23},-e^{36},e^{26},-e^{56},e^{46},0)$&$J^*e^1=-e^6,\ J^*e^2=e^3,\ J^*e^3=-e^2,$\\
%&&$J^*e^4=-e^5,\ J^*e^5=e^4,\ J^*e^6=e^1$\\\hline
$A_{6,37}^{0,0,1}$&$(e^{23},-e^{36},e^{26},-e^{56},e^{46},0)$&$(e^1+ie^6)\wedge(e^2-ie^3)\wedge(e^4+ie^5)$&$-$\\\hline
$A_{6,39}^{a,b}\,,(a,b)\in\{(-1,-1),$& $((b+1)e^{16}+e^{45},e^{15}+(b+2)e^{26},$ &$-$&$-$\\
$(-\frac{5}{2},-\frac{1}{2}),(5,-3),(2,-2)\} $&$ae^{36},be^{46},e^{56},0)$ &&\\\hline
%$\fre(2)\oplus\fre(1,1)$&$(0,-e^{13},e^{12},0,-e^{46},-e^{45})$&$\rho=e^{123}+e^{125}+\frac{1}{2}e^{136}+e^{156}-\frac{3}{4}e^{234}-\frac{1}{2}e^{245}+e^{346}$\\\hline
\end{tabular}
}
}

%%%\medskip

\vskip.2cm

\centerline{{\bf Table 2.} Indecomposable unimodular (non-nilpotent) solvable}

\centerline{Lie algebras with $b_3\geq 2$.}

%\newpage
 {\footnotesize
\centerline{
\begin{tabular}{|l|l|c|c|}
\hline
\bf{Lie algebra} $\frg$& \bf{structure equations}& \bf{closed (3,0)-form} $\Psi$ &$\lambda(\rho)\geq 0$\\
\hline
%$A_{6,82}^{0,1,1}$&$(e^{24}+e^{35},e^{26},e^{36},-e^{46},-e^{56},0)$&$J^*e^1=\frac{1}{2}e^6,\ J^*e^2=e^3,\ J^*e^3=-e^2,$\\
%&&$J^*e^4=e^5,\ J^*e^5=-e^4,\ J^*e^6=-2e^1$\\\hline
%$A_{6,39}^{2,-2}$&$(-e^{16}+e^{45},e^{15}+4e^{26},-2e^{36},2e^{46},e^{56},0)$&-\\\hline
$A_{6,41}^{-1}$&$(e^{45},e^{15}+e^{26},-e^{36}+e^{46},-e^{46},e^{56},0)$&$-$&$-$\\\hline
$A_{6,54}^{a,b}$ & $(e^{16}+e^{35},be^{26}+e^{45},(1-a)e^{36},$ &$-$&$-$\\
$(a,b)\in\{(0,-1),$& \ $(b-a)e^{46},ae^{56},0)$ &&\\
$(-1,-\frac{3}{2}),(2,0)\}$&  &&\\\hline
$A_{6,63}^{-1}$&$(e^{16}+e^{35},-e^{26}+e^{45}+e^{46},e^{36},-e^{46},0,0)$&$-$&$-$\\\hline
$A_{6,70}^{0,0}$&$(-e^{26}+e^{35},e^{16}+e^{45},-e^{46},e^{36},0,0)$&$-$&$-$\\\hline
$A_{6,76}^{-1}$&$(-e^{16}+e^{25},e^{45},e^{24}+e^{36},e^{46},-e^{56},0)$&$-$&$-$\\\hline
$A_{6,78}$&$(-e^{16}+e^{25},e^{45},e^{24}+e^{36}+e^{46},e^{46},$&$-$&$-$\\
&$-e^{56},0)$&&\\\hline
$B_{6,3}^{0}$&$(e^{45},e^{15}+e^{36},e^{14}-e^{26},-e^{56},e^{46},0)$&$-$&$-$\\\hline
%%%$B_{6,4}^{\pm1}$&$(e^{45},e^{15}+e^{36},e^{14}-e^{26}\pm e^{56},-e^{56},e^{46},0)$&
%%%$(e^1-i\frac{e^6}{2})\wedge(e^2\pm ie^3)\wedge(e^4\mp ie^5)$&$-$\\\hline
%%%
%%% basta tomar +, porque - es isomorfa
%%%
$B_{6,4}^1$&$(e^{45},e^{15}+e^{36},e^{14}-e^{26}+e^{56},-e^{56},e^{46},0)$&$(e^1-i\frac{e^6}{2})\wedge(e^2+ ie^3)\wedge(e^4- ie^5)$&$-$\\\hline
$A_{6,82}^{0,1,b}$&$(e^{24}+e^{35},e^{26},be^{36},-e^{46},-be^{56},0)$&$-$&$-$\\
$0\leq b<1$&&&\\\hline
$A_{6,82}^{0,1,1}$&$(e^{24}+e^{35},e^{26},e^{36},-e^{46},-e^{56},0)$&$(e^1-i\frac{e^6}{2})\wedge(e^2-ie^3)\wedge(e^4-ie^5)$&$-$\\\hline
%$A_{6,83}^{1,5}$&$(2e^{16}+e^{24}+e^{35},6e^{26},e^{26}+6e^{36},-4e^{46}-e^{56},-4e^{56},0)$&-\\\hline
$A_{6,83}^{0,1}$&$(e^{24}+e^{35},e^{26},e^{26}+e^{36},-e^{46}-e^{56},$&$-$&$-$\\
&$-e^{56},0)$&&\\\hline
$A_{6,88}^{0,1,b}$& $(e^{24}+e^{35},e^{26}-be^{36},be^{26}+e^{36},-e^{46}-be^{56},$ &$-$&$-$\\
$b>0$& \ $be^{46}-e^{56},0)$ &&\\\hline
%$A_{6,88}^{0,0,1}$&$(e^{24}+e^{35},-e^{36},e^{26},-e^{56},e^{46},0)$&$J^*e^1=-\frac{1}{2}e^6,\ J^*e^2=-e^4,\ J^*e^3=-e^5,$\\
%&&$J^*e^4=e^2,\ J^*e^5=e^3,\ J^*e^6=2e^1$\\\hline
$A_{6,88}^{0,0,1}$&$(e^{24}+e^{35},-e^{36},e^{26},-e^{56},e^{46},0)$&$(e^1+i\frac{e^6}{2})\wedge(e^2+ie^4)\wedge(e^3+ie^5)$&$-$\\\hline
$A_{6,89}^{0,1,b}$&$(e^{24}+e^{35},be^{26},-e^{56},-be^{46},e^{36},0)$&$-$&$-$\\
$b\in \mathbb{R}$&&&\\\hline
$A_{6,90}^{0,\pm 1}$&$(e^{24}+e^{35},e^{46},\pm e^{56},0,\mp e^{36},0)$&$-$&$-$\\\hline
$A_{6,93}^{0,1}$&$(e^{24}+e^{35},-e^{56},-e^{46}-e^{56},e^{26}+e^{36},e^{26},0)$&$-$&$-$\\\hline
$B_{6,6}^{a},\ a\neq0 $&$(e^{24}+e^{35},e^{46},a e^{56},-e^{26},-a e^{36},0)$&$-$&$-$\\
$-1<a<1$&&&\\\hline
$B_{6,6}^{1}$&$(e^{24}+e^{35},e^{46},e^{56},-e^{26},-e^{36},0)$&$(e^1-i\frac{e^6}{2})\wedge(e^2+ie^4)\wedge(e^3-ie^5)$&$-$\\\hline
$A_{6,94}^{-2}$&$(e^{25}+e^{34},-e^{26}+e^{35},-2e^{36},2e^{46},e^{56},0)$&$-$&$-$\\\hline
\end{tabular}
}
}

\medskip

\centerline{{\bf Table 2 (continued).} Indecomposable unimodular (non-nilpotent) solvable}

\centerline{Lie algebras with $b_3\geq 2$.}

\bigskip

\medskip

\noindent {\bf Acknowledgments.}
This work has been partially supported through Project MICINN (Spain) MTM2011-28326-C02-01 and  by GNSAGA (Indam) of Italy.
We thank the referees for useful comments and suggestions that have helped us to improve 
the final version of the paper.

{\small

}

\end{document}